\numberwithin{equation}{section}
\newtheorem{theorem}{Theorem}[section]
\newtheorem{lemma}[theorem]{Lemma}
\newtheorem{proposition}[theorem]{Proposition}
\newtheorem{problem}{Problem}
\newtheorem{question}{Question}
\newtheorem{definition}{Definition\rm}
\newtheorem{conjecture}{Conjecture\rm}
\newtheorem{remark}[theorem]{\it Remark}
\newcounter{paraga}[section]
\newcommand{\RRR}{{R}}
\newcommand{\DC}{{\rm DC}}
\newcommand{\BNF}{{\rm BNF}}
\newcommand{\N}{\mathbb{N}}
\newcommand{\Z}{\mathbb{Z}}
\newcommand{\Q}{\mathbb{Q}}
\newcommand{\R}{\mathbb{R}}
\newcommand{\C}{\mathbb{C}}
\newcommand{\cN}{\mathcal{N}}
\renewcommand{\a}{\alpha}
\begin{document}

\def\MP{\,{<\hspace{-.5em}\cdot}\,}
\def\SP{\,{>\hspace{-.3em}\cdot}\,}
\def\PM{\,{\cdot\hspace{-.3em}<}\,}
\def\PS{\,{\cdot\hspace{-.3em}>}\,}
\def\EP{\,{=\hspace{-.2em}\cdot}\,}
\def\PP{\,{+\hspace{-.1em}\cdot}\,}
\def\PE{\,{\cdot\hspace{-.2em}=}\,}
\def\N{\mathbb N}
\def\C{\mathbb C}
\def\Q{\mathbb Q}
\def\R{\mathbb R}
\def\T{\mathbb T}
\def\A{\mathbb A}
\def\Z{\mathbb Z}
\def\demi{\frac{1}{2}}

\newtheorem{Main}{Theorem}
\newtheorem{Coro}{Corollary}
\newtheorem*{Principal}{Theorem}

\renewcommand{\theMain}{\Alph{Main}}
\renewcommand{\theCoro}{\Alph{Coro}}

\setcounter{tocdepth}{3}

\begin{titlepage}
\author{Abed Bounemoura\footnote{CNRS - CEREMADE - IMCCE/ASD, abedbou@gmail.com} {} and Bassam Fayad\footnote{CNRS - IMJ-PRG and Centro Ennio De Giorgi, bassam.fayad@imj-prg.fr} {} and Laurent Niederman\footnote{Laboratoire Math\'ematiques d'Orsay \& IMCCE/ASD, Laurent.Niederman@math.u-psud.fr}}
\title{\LARGE{\textbf{Double exponential stability for generic real-analytic elliptic equilibrium points}}}
\end{titlepage}

\maketitle

\begin{abstract}
We consider the dynamics in a neighborhood of an elliptic equilibrium point with a Diophantine frequency of a symplectic real analytic vector field and we prove the following result of effective stability. Generically, both in a topological and measure-theoretical sense, any solution starting sufficiently close to the equilibrium point remains close to it for an interval of time which is doubly exponentially large with respect to the inverse of the distance to the equilibrium point. We actually prove a more general statement assuming the frequency is only non-resonant. This improves previous results where much stronger non-generic assumptions were required. 
\end{abstract}
 
\tableofcontents 
\section{Introduction}\label{s1}

The aim of this paper is to study the effective stability of elliptic equilibrium points in Hamiltonian systems. Our main result will be that the flow of a real analytic Hamiltonian $H$  in $n$ degrees of freedom having a Diophantine equilibrium point at the origin is doubly exponentially stable at the origin under an open and dense condition of full Lebesgue measure which only involves the part of the power expansion of $H$ that contains the terms of degree between $3$ and $[\frac{n^2+4}{2}]$. This result will be derived from a more general effective stability result for non-resonant elliptic equilibrium points. Before stating the exact results, let us start by describing the general setting.

\subsection{Stability of elliptic equilibrium points}\label{s11}

We consider a symplectic manifold $(M,\Omega)$ of dimension $2n$, $n\in \N$, where $\Omega$ is an everywhere non-degenerate closed $2$-form, a smooth symplectic vector field $X$ on $M$ (meaning that the one-form $i_X \Omega$ is closed, or, equivalently, that the Lie derivative $\mathcal{L}_X \Omega$ vanishes identically) and an equilibrium point $p^* \in M$, that is $X(p^*)=0$. We are interested in studying whether $p^*$ is \emph{stable} in the following sense (in the sense of Lyapounov): given any neighborhood $U$ of $p^*$, there exists a smaller neighborhood $V$ of $p^*$ such that for any point $p_0 \in V$, the unique solution $p(t)$ of $X$ starting at $p_0$ (that is, the unique curve $p(t)$ satisfying $\dot{p}(t)=X(p(t))$ and $p(0)=p_0$) is defined and contained in $U$ for all time $t\in \R$. 

The problem being local, there are some obvious simplifications. First, by the classical theorem of Darboux, we may assume without loss of generality that $(M,\Omega)=(\R^{2n},\Omega_0)$ where $\Omega_0$ is the canonical symplectic structure of $\R^{2n}$, and that $p^*=0 \in \R^{2n}$. Then, we may also assume that the one-form $i_X \Omega_0$ is in fact exact, meaning that $X$ is \emph{Hamiltonian}: given a primitive $H$ of $i_X \Omega_0$ and letting $J_0$ be the canonical complex structure of $\R^{2n}$, the vector field can be simply written $X=X_H=J_0\nabla H$, where the gradient is taken with respect to the canonical Euclidean structure of $\R^{2n}$. Therefore $0$ is an equilibrium point of $X_H$ if and only if it is a critical point of $H$, that is $\nabla H(0)=0$. Moreover, the Hamiltonian function $H$ being defined only modulo a constant, it is not a restriction to impose that $H(0)=0$.  

Let $(x,y)=(x_1,\dots,x_n,y_1,\dots,y_n)$ be symplectic coordinates defined in a neighborhood of the origin $0 \in \R^{2n}$ so that $(\dot{x}(t),\dot{y}(t))=X_H(x(t),y(t))$ is equivalent to the system 
\[ \dot{x}(t)=\partial_y H(x(t),y(t)), \quad \dot{y}(t)=- \partial_x H(x(t),y(t)). \]
Since $H(0)=0$ but also $\nabla H(0)=0$, the Taylor expansion of $H$ at the origin is of the form
\[ H(x,y)=H_2(x,y) + O_3(x,y) \]
where $H_2$ is the quadratic part of $H$ at the origin and where $O_3(x,y)$ contains terms of order at least $3$ in $(x,y)$. We can now define the \emph{linearized} Hamiltonian vector field at the origin to be the Hamiltonian vector field associated to $H_2$:
\[X_{H_2}=J_0\nabla H_2=J_0A\] 
where $A$ is the symmetric $2n \times 2n$ matrix (corresponding, up to a factor $2$, to the Hessian of $H$ at the origin) such that $H_2(x,y)=A(x,y)\cdot (x,y)$. In order to study the stability of the equilibrium point, it is useful to first study its \emph{linear stability}, that is, the stability of the origin for the linearized vector field (the latter is obviously equivalent to the boundedness of all its solutions). The matrix $J_0A$ possesses symmetries which imply, in particular, that if $\lambda$ is an eigenvalue then so is its complex conjugate $\bar{\lambda}$. It follows that if $J_0A$ has an eigenvalue with a non zero real part, it also has an eigenvalue with positive  real part and in this case one can find solutions of the linear system that converges to infinity at an exponential rate: this implies linear instability but also instability in the sense of Lyapounov. We will say that the equilibrium point is \emph{elliptic} if the spectrum of the matrix $J_0A$ is both purely imaginary and simple. This implies linear stability, while linear stability is equivalent to $J_0A$ being semi-simple and its spectrum purely imaginary (but the assumption that the spectrum is simple, which is already a non-resonance assumption, will be important for us in the sequel). Note that if we only assumed the spectrum to be purely imaginary, then, if the matrix $J_0A$ has a non-trivial Jordan block, one can find solutions for the linearized vector field converging to infinity at a polynomial rate, implying linear instability (but not necessarily instability in the sense of Lyapounov).

So from now on, $0 \in \R^{2n}$ is assumed to be an elliptic equilibrium point of the Hamiltonian system defined by a smooth function $H$. Since the spectrum of the matrix $JA$ is invariant by complex conjugation, it has necessarily the form $\{\pm i \alpha_1, \dots, \pm i \alpha_n\}$ for some vector $\alpha=(\alpha_1,\dots,\alpha_n) \in \R^n$ with distinct components: this is usually called the \emph{frequency vector}. By a result of linear symplectic algebra (a simple case of a theorem due to Williamson, see \cite{AKN97}) one can find a linear symplectic map which puts the quadratic part into diagonal form (this result requires the components of $\alpha$ to be distinct): hence we can assume that $H$ is of the form
\begin{equation} \label{Hamintro} H(x,y)= \sum_{j=1}^n \alpha_j(x_j^2+y_j^2)/2 + O_3(x,y), \end{equation}
where our standing assumption from now on is that the Hamiltonian $H$ is \emph{real-analytic}, hence it can be extended as a holomorphic function on some complex neighborhood of the origin. Also,  we will always assume that the frequency vector $\alpha$ is \emph{non-resonant}, that is for any non-zero $k\in \Z^n$, the Euclidean scalar product $k\cdot \alpha$ is non-zero.

Note that fixing such coordinates imposes a sign on the components of the vector $\alpha \in \R^n$. Given a point $(x,y)\in \R^{2n}$, let us define $I(x,y) \in \R^n_+$ by
\[ I(x,y)=(I_1(x_1,y_1),\dots,I_n(x_n,y_n)), \quad I_j(x_j,y_j)=(x_j^2+y_j^2)/2, \quad 1 \leq j \leq n \]
so that $H$ can be written again as
\begin{equation*}
H(x,y)=\alpha \cdot I(x,y)+O_3(x,y):=h_1(I(x,y))+O_3(x,y) \end{equation*}

The linearized vector field, associated to $h_1(I(x,y))=\alpha \cdot I(x,y)$, is easily integrated: given an initial condition $(x_0,y_0)$, the corresponding solution $(x(t),y(t))$ is quasi-periodic. More precisely, letting $I_0=I(x_0,y_0) \in \R^n_+$, one obviously has $I(x(t),y(t))=I_0$ for all time $t\in\R$ and so the set $T(I_0)=\{(x,y) \in \R^{2n} \; | \; I(x,y)=I_0\}$ is an invariant torus, the dimension of which equals the number of strictly positive components of $I_0$, and on which the flow is just a flow of translation. The same holds true in fact for an arbitrary Hamiltonian depending only on the quantity $I(x,y)$, and such Hamiltonians will be called here \emph{integrable}. 

A central question in Hamiltonian dynamics is then the following. 
\begin{problem}
For a Hamiltonian $H$ as in~\eqref{Hamintro}, is the origin stable or unstable?
\end{problem}
By stable we mean Lyapunov stable in the sense that points near the origin remain in a  neighborhood of the origin. Other notions of stability may also be addressed as we will see below.

\subsection{Perturbation of completely integrable systems.} 
If $H$ is integrable, the origin is obviously stable. Now in general $H$ is, in a small neighborhood of the origin, a small perturbation of the integrable Hamiltonian $h_1$ and thus classical techniques from perturbation theory (such as KAM theory, Aubry-Mather theory, Nekhoroshev estimates or Arnold diffusion) may be used to tackle the problem. However, this setting of singular perturbation theory is quite different from the usual context of a perturbation of an integrable Hamiltonian system in action-angle coordinates, that is, a Hamiltonian of the form $h(I)+\varepsilon f(\theta,I)$, where $\varepsilon$ is the small parameter and $(\theta,I) \in \T^n \times \R^n$. 

A first obvious difference is that for a Hamiltonian $H$ as in~\eqref{Hamintro}, one cannot introduce action-angle coordinates on a full neighborhood of the origin: indeed, if we let $I_j=I_j(x_j,y_j)$, then the symplectic polar coordinates
\[ x_j=\sqrt{2I_j}\cos\theta_j, \quad y_j=\sqrt{2I_j}\sin\theta_j, \quad 1 \leq j \leq n  \]
are analytically well-defined only away from the axes $I_j=0$. This amounts to the fact that for a Hamiltonian integrable in a neighborhood of an elliptic equilibrium point, the foliation by invariant tori is singular in the sense that the dimension of each leaf is non-constant (it varies from $0$ to $n$), whereas in action-angle coordinates this foliation is regular. 

A second difference lies in the fact that for Hamiltonians of the form $h(I)+\varepsilon f(\theta,I)$ the perturbation $f$ is usually considered as arbitrary whereas in~\eqref{Hamintro} the perturbation is more restricted as it is given by the higher order terms $O_3(x,y)$.

Finally, a third difference is that, under the assumption that $\alpha$ is non-resonant, a Hamiltonian $H$ as in~\eqref{Hamintro} possesses infinitely many integrable approximations $h^m$, for any integer $m \geq 2$ (given by the Birkhoff normal form, see below for more details) which are uniquely determined (once the vector $\alpha$ is fixed). This is in sharp contrast with a Hamiltonian of the form $h(I)+\varepsilon f(\theta,I)$ which does not have, in general, further integrable approximations.  

As we will see below, these differences have the following general effect: in a neighborhood of an elliptic equilibrium point, as opposed to a perturbation of an integrable system in action-angle coordinates, stability properties are stronger and instability properties are harder to exhibit.  

\subsection{KAM stability}\label{s12}

Due to the classical KAM (Kolmogorov-Arnold-Moser) theory, one can prove, for any number of degrees of freedom and assuming some non-degeneracy assumption (on the higher order terms $O_3(x,y)$), that the elliptic equilibrium point is \emph{KAM stable}: in any sufficiently small neighborhood of the origin, there exist a positive measure set of Lagrangian invariant tori, on which the dynamics is conjugated to a linear flow, having the origin as a Lebesgue density point. In general,  KAM stability does not have direct implications on Lyapounov stability.

There are however two cases for which one knows that stability holds true for a Hamiltonian $H$ as in~\eqref{Hamintro}. 

The first case is when the quadratic part $H_2$ is sign-definite, or, equivalently, when the components of the vector $\alpha \in \R^{n}$ have the same sign (and this includes, as a trivial instance, the case $n=1$). Indeed, the Hamiltonian function has then a strict minimum (or maximum) at the origin, and as this function is constant along the flow (it is in particular a Lyapounov function) one can construct, using standard arguments, a basis of neighborhoods of the origin which are invariant, and the latter property is obviously equivalent to stability.

The second case is when $n=2$ and when the so called Arnold iso-energetic non-degeneracy condition is satisfied. Then, KAM stability occurs in every energy level passing sufficiently close to the origin,  implying Lyapounov stability as the two-dimensional tori disconnect each three-dimensional energy level (see for instance \cite{Arn61} and \cite{Mos62}). It is easy to see that the Arnold iso-energetic non-degeneracy condition is generic in measure and topology as a function of the coefficients of the $O_4(x,y)$ part of the Taylor expansion of $H$ around the origin. 

Related to the results that we will expose in the following sections, let us mention that it is sometimes possible to replace the non-degeneracy assumption in the study of stability by arithmetic conditions on the frequency vector $\a$ of the linear part of the flow at the equilibrium. 
Indeed, in the analytic setting, Herman conjectured the KAM  stability (without the Lebesgue density requirement) of Diophantine equilibria without any non-degeneracy assumption. In (\cite{Her98}) he made the following conjecture (in the slightly different context of symplectic maps).

\begin{conjecture}[Herman]
Assuming that $\alpha$ is Diophantine, in any sufficiently small neighborhood of the origin there exists a set of positive Lebesgue measure of Lagrangian invariant tori. 
\end{conjecture}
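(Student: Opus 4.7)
The plan is to reduce the question to a small-perturbation problem for which a KAM-type theorem can be invoked. First I would exploit the fact that $\alpha$ is non-resonant (and indeed Diophantine) to put $H$ into a Birkhoff normal form of some high order $m$,
\begin{equation*}
H \circ \Phi = \alpha \cdot I(x,y) + p(I(x,y)) + R(x,y),
\end{equation*}
where $\Phi$ is an analytic symplectic transformation fixing the origin, $p$ is a polynomial in the actions of degree at most $m$ without linear part, and $R$ vanishes at the origin to an order in $(x,y)$ that can be chosen arbitrarily high. In any annular region of the form $\{c r^2 \leq I_j \leq r^2,\; 1\leq j\leq n\}$, symplectic polar coordinates are analytically well-defined and convert this into a standard action-angle perturbation problem $h(I) + \varepsilon(r) f(\theta, I)$, where $\varepsilon(r)$ tends to zero faster than any power of $r$ as $m$ is taken large.

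Next, if the frequency map $I\mapsto \omega(I) = \alpha + \nabla p(I)$ of the normalized integrable part were non-degenerate in Kolmogorov's sense (or merely in the weaker sense of R\"ussmann, namely that its image is not contained in any hyperplane), the classical KAM theorem would produce a set of positive Lebesgue measure of Lagrangian invariant tori inside the annular region. Taking a countable union as $r\to 0$ would then yield the desired set of invariant tori in any sufficiently small neighborhood of the origin, with the density at the origin controlled by the usual KAM measure estimates.

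The central obstacle — and the very reason this is stated as a conjecture — is that one is forbidden to assume any non-degeneracy on $H$: the Birkhoff polynomials could in principle vanish at every order, leaving a flat frequency map $\omega \equiv \alpha$ for which no frequency variation is available. In this fully degenerate scenario the strategy I would try is a \emph{translated-torus} KAM scheme in the spirit of Herman and Eliasson: fix a Diophantine target frequency and search directly for an invariant Lagrangian torus carrying that frequency, introducing a counterterm (a shift in actions, or a modification of $H$ by an affine function of the actions) to absorb the translation in frequency induced by the perturbation. Producing a positive-measure \emph{family} of such tori, rather than a single one, would require varying the target frequency along a Diophantine set and showing that the corresponding counterterms vanish on a subset of positive measure — and this last step is precisely where some residual non-degeneracy information seems to be needed, exactly what the hypotheses forbid. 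Overcoming this circularity is, in my view, the main obstacle, and explains why the conjecture has so far been established only in low dimension or under additional genericity assumptions.
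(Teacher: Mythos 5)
You have been asked to ``prove'' a statement that the paper itself presents as an \emph{open conjecture}, not a theorem: the paper explicitly says that Herman's conjecture is known only for $n=2$ (R\"ussmann, and its smooth extensions) and is ``unknown in general,'' citing only partial results. The paper supplies no proof; it merely states the conjecture as background motivation for the effective-stability results it does establish. Your write-up is therefore not a proof and does not claim to be one, and that is in fact the correct assessment.

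Your discussion of the difficulty is accurate. The Birkhoff normal form plus annular action-angle coordinates plus classical KAM does yield invariant tori whenever some non-degeneracy (Kolmogorov or R\"ussmann) holds on the normalized integrable part, and that is precisely the setting where the paper's ``KAM stability'' remarks apply. The genuine obstruction you name --- that the Birkhoff series may be entirely flat, leaving no frequency variation to feed into a KAM iteration, so that a translated-torus or counterterm scheme produces at best isolated tori rather than a positive-measure family without reintroducing a non-degeneracy hypothesis --- is exactly the circularity that has kept the conjecture open for $n\geq 3$. If you want to pursue this further, the references the paper gives (\cite{Rus02}, \cite{FK09}, \cite{EFK13}, \cite{EFKduke}) are where the known partial progress lives, including the $n=2$ resolution via the disconnecting property of $2$-tori in $3$-dimensional energy levels.
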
 

Recall that  $\a \in \R^n$ is said to be Diophantine if for some constant $\gamma>0$ and exponent $\tau \geq n-1$ it holds that $|k\cdot \alpha| \geq \gamma|k|_1^{-\tau}$ for all $k=(k_1,\dots,k_n) \in \Z^{n} \setminus \{0\}$, where $|k|_1:=|k_1|+\cdots+|k_n|$. We then use the notation $\a \in \DC(\tau,\gamma)$.

Herman's conjecture is true for $n=2$, even in the smooth category, as it was proved by R{\"u}ssmann (see for instance \cite{Rus02} and \cite{FK09} in the discrete case, for respectively real-analytic and smooth maps, and \cite{EFK13} or \cite[Section 7.1]{EFKduke} in the continuous case) but unknown in general (see \cite{EFK13,EFKduke} for partial results).  Note that KAM stability of a Diophantine equilibrium for a Hamiltonian in the case $n=2$ does not imply {\it a priori} Lyapunov stability.

Observe also that this KAM stability phenomenon without any non-degeneracy condition has no counterpart for perturbed integrable system in action-angle coordinates, since any integrable system that does not satisfy  the so-called R{\"u}ssmann non-degeneracy condition can be simply perturbed so that no invariant torus survives (see \cite{Sev03}).

\subsection{Arnold's diffusion  conjecture}\label{s13}

Arnold conjectured that apart from these two cases (the case of a sign-definite quadratic part, and generically for $n=2$), an elliptic equilibrium point is generically unstable. More precisely, in \cite{Arn94} one can find the following conjecture.

\begin{conjecture}[Arnold] An elliptic equilibrium point of a generic analytic Hamiltonian system is Lyapounov unstable, provided $n \geq 3$ and the quadratic part of the Hamiltonian function at the equilibrium point is not sign-definite.
\end{conjecture}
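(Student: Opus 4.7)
The approach I would propose is to adapt the Arnold diffusion mechanism to the singular geometry near an elliptic equilibrium. First, using the non-resonance of $\alpha$ (Diophantine in the generic case), one puts $H$ into Birkhoff normal form up to an optimal order $N(r)$ on a polydisc of radius $r$: in symplectic polar coordinates $(\theta,I)$ away from the axes $\{I_j=0\}$ this reads $h(I)+R(\theta,I)$, with an integrable part $h=\alpha\cdot I+O_2(I)$ and a remainder $R$ that, by the very estimates leading to the paper's main result, is at most doubly exponentially small in $r^{-1}$. The hypothesis that $H_2$ is not sign-definite forces the components of $\alpha$ to have mixed signs, so the frequency map $\nabla h$ takes a range that meets simple resonance hyperplanes $\{k\cdot\nabla h(I)=0\}$, $k\in\Z^n\setminus\{0\}$, of arbitrarily small norm inside any neighborhood of the origin, opening the door to an action drift.

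Second, I would select a resonance chain: a finite sequence of simple resonances whose intersection hyperplanes connect level sets of $|I|$ at different distances from $0$. Along each resonance a further partial averaging reduces $h+R$ to an effective pendulum-like Hamiltonian in one degree of freedom coupled to the transverse actions, and generically the corresponding separatrices split. These split separatrices would then be chained, either geometrically through the scattering map of Delshams--de la Llave--Seara, or variationally in the spirit of Mather, Bernard, and Kaloshin--Zhang, to produce a trajectory whose actions leave any prescribed neighborhood of a given torus, contradicting Lyapunov stability. Genericity would be extracted by identifying a transversality condition on the first non-trivial Melnikov integral as a function of the $O_3, O_4$ Taylor coefficients of $H$.

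The main obstruction, and the reason this conjecture remains open, is essentially the statement of the paper itself: double exponential stability forces the Melnikov/splitting integrals along any admissible resonance to be doubly exponentially small in $r^{-1}$, so the diffusion time is astronomical and the estimates controlling the chain of heteroclinic transitions must be correspondingly sharp. Proving non-vanishing of these beyond-all-orders quantities on an open dense set of Hamiltonians seems to demand a Gevrey or analytic-continuation analysis in the tradition of Lazutkin, Gelfreich, and Sauzin, tools currently available only in very low-dimensional model problems. A second, more structural difficulty is that the action-angle chart degenerates at the coordinate axes, so any diffusing trajectory must stay uniformly bounded away from them, which restricts drastically the resonance chains that are admissible and rules out the simplest mechanisms.

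In view of these obstructions I would, as a realistic intermediate goal, aim only at instability on an explicit open set of Hamiltonians satisfying a transversality condition on the truncated Birkhoff invariants of some fixed degree, along a specific low-order resonance compatible with the signs of $\alpha$, and defer the density part of the conjecture to a separate, more delicate analytic argument establishing the non-vanishing of the full splitting on a residual set.
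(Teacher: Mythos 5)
This statement is Arnold's conjecture, which the paper does not prove. The authors explicitly remark that it ``is wide open, to such an extent that under our standing assumptions (real-analyticity of the Hamiltonian and a non-resonance condition on the frequency vector) not a single example is known.'' The paper's own Theorem A points in the opposite direction: generically the equilibrium is doubly exponentially stable, which makes any instability, if it exists, invisible on an enormous time scale. So there is no paper proof to compare your attempt against.

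Your text is accordingly not a proof but a roadmap, and you are honest about that in the last two paragraphs. The concrete gaps you would have to close are exactly the ones you name, and each is currently open: (i) the non-vanishing of Melnikov/splitting quantities that are beyond all orders in $r$ (indeed doubly exponentially small, by Theorem A) on a dense set of Hamiltonians; (ii) chaining transition tori across resonances while staying uniformly away from the coordinate axes $\{I_j=0\}$, where the angle variables degenerate; and (iii) making the whole construction compatible with the fact that, unlike a perturbation $h(I)+\varepsilon f(\theta,I)$, the Hamiltonian near the equilibrium has infinitely many integrable Birkhoff approximations, so the ``remainder'' driving diffusion is not a free parameter. Two smaller inaccuracies worth flagging: non-sign-definiteness of $H_2$ does \emph{not} in general force the frequency map $\nabla h$ to hit resonance hyperplanes at arbitrarily small actions (resonances of $\alpha$ itself are excluded by the non-resonance hypothesis; what can occur are near-resonances, whose density and geometry depend delicately on the Birkhoff coefficients); and ``generically the separatrices split'' is precisely the step the paper's double exponential estimate shows cannot be established by finite-order perturbation theory. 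As written, the proposal identifies the right obstructions but does not overcome any of them, and it should not be presented as a proof of the conjecture.
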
  

This conjecture is wide open, to such an extent that under our standing assumptions (real-analyticity of the Hamiltonian and a non-resonance condition on the frequency vector) not a single example is known.

If the frequency vector is resonant, it is quite trivial to construct an example of unstable elliptic equilibrium point (see \cite{Mos60}). The genericity is, however, still open (see \cite{KMV04} for an announcement on some partial results). 

If the Hamiltonian is smooth non-analytic, examples have been constructed by Douady-Le Calvez (\cite{DLC83}) for $n=3$ and by Douady (\cite{Dou88}) for any $n\geq 3$, but here also, genericity seems out of reach. 

\subsection{Effective stability}\label{s14}

The aim of this paper is to investigate the so called effective stability of an elliptic equilibrium point. More precisely, given $r$ sufficiently small and any initial condition $(x_0,y_0)$ at a distance at most $r$ from the origin, we are interested in the largest positive time $T(r)$ for which the solution $(x(t),y(t))$, starting at $(x_0,y_0)$, stays at a distance at most $2r$ from the origin, for all $|t| \leq T(r)$. Arnold's conjecture states that for $n\geq 3$,  it holds generically that $T(r)<\infty$. At the moment there is no other conjectural upper bound on $T(r)$. In this paper, we will be interested in lower bound on $T(r)$. Let us  first recall some previous results.

First, without any assumptions, it is easily seen from the equations of motion that $T(r)$ is at least of order $r^{-1}$. Then, given an integer $K \geq 4$, with the assumption that $H$ is smooth and $\alpha$ is non-resonant up to order $K$, that is
\[   k \in \Z^n, \quad 0 < |k|_1 \leq K \Longrightarrow k\cdot\alpha \neq 0\]
the following statement can be proved (see \cite{Bir66} or \cite{Dou88}): there exists a symplectic transformation $\Phi^K$, well-defined in a neighborhood of the origin, such that 
\begin{equation} H\circ \Phi^K(x,y)=\alpha\cdot I(x,y)+h^m(I(x,y))+f^K(x,y)  \label{BNFK} \tag{BNF} \end{equation}
where $h^m$ is a polynomial of degree $m=[K/2]$ (the integer part of $K/2$) in $n$ variables, with vanishing constant and linear terms, and $f_K$ is of higher order $O_{K+1}(x,y)$. 
The polynomial $\alpha\cdot I(x,y)+h^m(I(x,y))$ is usually called the Birkhoff normal form of $H$ of order $K$. Since the term $\alpha\cdot I(x,y)$ will be fixed in the sequel we will denote $h^m(I(x,y))$ by $\BNF_K(H)$. The polynomial $\BNF_K(H)$ is uniquely defined, but, in general, this is not the case for the coordinate change function $\Phi^K$ (although there is a distinguished choice of a generating function for $\Phi^K$). An obvious consequence of \eqref{BNFK}  is that, in this case, $T(r)$ is at least of order $r^{-K+1}$ at the origin (naturally, the neighborhood in which the effective stability holds depends on $K$ and may be very small depending in particular on the arithmetics of $\a$). Thus if $\alpha$ is non-resonant and $H$ is of class $C^\infty$, $T(r)$ becomes larger near the origin than any power of $r^{-1}$. 
Observe that if $\alpha$ is non-resonant, one can find a formal symplectic transformation $\Phi^{\infty}$ and a unique formal series $h^{\infty}$ in $n$ variables such that $H\circ \Phi^{\infty}(x,y)=h^{\infty}(I(x,y))$. However, the formal transformation $\Phi^{\infty}$ is in general divergent (see \cite{Sie41}), and the convergence problem for the formal series $h^{\infty}$ is still an open problem (see \cite{PM03} for some results). 

Now with the assumption that the Hamiltonian $H$ is real-analytic, exponentially large lower bounds for $T(r)$ have been obtained in two different contexts.

First, if $\alpha$ is Diophantine, $\a \in \DC(\tau,\gamma)$, one can prove that $T(r)$ is at least of order $\exp\left((\gamma r^{-1})^{\frac{1}{\tau+1}}\right)$. This is obtained by estimating the size of the remainder term $f^K$ in the Birkhoff normal form of order $K$, and then choosing $K=K(r)$ as large as possible in terms of $r$ (see \cite{GDFGS} or \cite{DG96b} for slightly better estimates). One should point out here that actually for any non-resonant $\alpha$ one can associate a function $\Delta_\alpha(r)$ and prove that $T(r)$ is at least of order $\exp\left(\Delta_\alpha(r^{-1})\right)$ (see Section \ref{s21} below for the definition of this function $\Delta_\alpha$).  In the Diophantine case one has $\Delta_\alpha(x)\geq (\gamma x)^{\frac{1}{\tau+1}}$ and the classical result is thus recovered.

Then, in a different direction, assuming only that $\alpha$ is non-resonant up to order $K$, for some $K \geq 4$, but requiring that the quadratic form $h^2$ is positive definite (which implies that $h_1+h^2$, and then $h_1+h^m$ for any $m\geq 2$, is convex in a neighborhood of the origin), it has been proved that $T(r)$ is at least of order $\exp\left(r^{-\frac{K-3}{2n}}\right)$: this was established independently by Niederman (\cite{Nie98}) and Fasso-Guzzo-Benettin (\cite{BFG98}) and later clarified by Pöschel (\cite{Pos99a}). The proof is based on the implementation of Nekhoroshev's estimates (\cite{Nek77}, \cite{Nek79}): observe that in the absence of action-angle coordinates, this implementation is not straightforward and it was only conjectured by Nekhoroshev.  

It is a remarkable fact that both exponential stability results under one of the two hypothesis : 1) $\alpha$ is Diophantine or 2) $h^2$ is positive definite, can be combined into a double exponential stability result if both 1) and 2) hold. This was first done by Giorgilli and Morbidelli in \cite{MG95} in the context of a quasi-periodic invariant Lagrangian torus. In our context of an elliptic equilibrium, the result of \cite{MG95} would amount to double exponential stability of a Diophantine equilibrium provided $h^2$ is positive definite, or more precisely that $T(r)$ is at least of order $\exp\left((\exp((\gamma r^{-1})^{\frac{1}{1+\tau}}))^{\frac{1}{2n}}\right)$. Even though the condition that $h^2$ is positive definite is open, it is far from being generic in any sense and recently some efforts have been made to improve this result, especially in \cite{Bou11etds} and \cite{Nie13}. In \cite{Bou11etds}, using results from \cite{Nie07} and \cite{BN12}, it was proved that under a certain condition on the formal Birkhoff series $h_\infty$, the double exponential stability holds true. This condition, which includes the condition that $h^2$ is positive definite as a particular case, was proved to be prevalent (a possible generalization of ``full measure" in infinite dimensional spaces) in the space of all formal series. This result has at least two drawbacks. First, although this condition can be termed generic in a measure-theoretical sense, it is far from being generic in a topological sense. Secondly, this condition was only formulated in the space of formal series, and it was unclear whether prevalent Hamiltonians have formal Birkhoff series satisfying this condition. This second issue was partially solved in \cite{Nie13}: it is proved there that a prevalent Hamiltonian has a formal Birkhoff series satisfying a condition close to the one introduced in \cite{Bou11etds}, yielding a result which is only intermediate between exponential and double exponential stability.

The aim of this paper is to improve those results by establishing that generically, and in a strong sense, the double exponential stability holds true.

\subsection{Main results} \label{s21} 

We start by some reminders and notations that will be useful in our statements. Let $H$ be a real analytic Hamiltonian on $\R^{2n}$ having an elliptic equilibrium point at the origin  with a non-resonant  frequency vector $\alpha$, that is $H$ is as in \eqref{Hamintro}.

\begin{itemize}

\item For vectors in $\C^{2n}$, $\|\,.\,\|$ denotes the norm defined as 
\begin{equation}\label{norme}
\|z\|:=\max_{1 \leq j \leq n} \sqrt{|z_j|^2+|z_{n+j}|^2}, \quad z=(z_1,\dots,z_n,z_{n+1}, \dots, z_{2n}) 
\end{equation}
and for vectors in $\C^n$, $\|\,.\,\|$ denotes the usual Euclidean norm
\begin{equation}\label{norme2}
\|I\|:=\sqrt{|I_1|^2+\cdots+|I_n|^2}, \quad I=(I_1,\dots,I_n,).
\end{equation}
It will be more convenient to use these different norms for vectors in $\C^{2n}$ or in $\C^n$, and we hope that this abuse of notations will not confuse the reader.

\item We suppose that the radius of  convergence of $H$  is strictly larger than some $\RRR>0$ and let $\|H\|_\RRR$ be the sup norm of $H$ in the open complex ball in $\C^{2n}$ centered at the origin of radius $R$ that we denote by
\begin{equation}\label{bouleR}
\mathcal{B}_{R}:=\{z \in \C^{2n} \; | \; \|z\|<R\}.
\end{equation} 
We also define the real ball $B_R:=\mathcal{B}_{R} \cap \R^{2n}$.

\item We denote by $P(n,m)$ the set  of polynomials of degree $m$ in $n$ variables. We let $P_2(n,m)\subset P(n,m)$ be the subspace of polynomials with a vanishing affine part, and $P_3(n,m) \subset P(n,m)$ the subset of polynomials that have a vanishing affine and quadratic part. 

\item We denote by $\tilde{H}_m \in P_3(2n,m)$ the part of the power expansion of $H$ that contains the terms of degree between $3$ and $m$ included.  

\item Having fixed the number of degrees of freedom $n$, in all the sequel, we let
$$K_0=K_0(n):= n^2+4, \quad m_0=m_0(n):=[K_0(n)/2].$$

\item The vector $\alpha$ is supposed to be non-resonant: this means that for any integer $K \geq 1$,
\begin{equation}\label{fonctionpsi}
\Psi_\alpha(K)=\max\{ |k\cdot\alpha|^{-1} \; | \; k \in \Z^n, \; 0 < |k|_1=|k_1|+\cdots|k_n|\leq K\} < +\infty. 
\end{equation}
We define, as in \cite{Bou12}, the function
\[ \Delta_\alpha(x)=\sup\{K \geq 1 \; | \; K\Psi_\alpha(K) \leq x\}. \]
Observe that if $\alpha \in \DC(\tau,\gamma)$, then $\Psi_\alpha(K) \leq \gamma^{-1} K^\tau$ and hence 
\begin{equation} \Delta_\alpha(x) \geq (\gamma x)^{\frac{1}{1+\tau}} \label{deltadioph} \end{equation} 

\item Recall that for $H$ as in \eqref{Hamintro}, there exists for every integer $K \geq 4$ a real analytic symplectic transformation $\Phi^K$ defined in the neighborhood of the origin such that 
\[ H\circ \Phi^K(x,y)=\alpha\cdot I(x,y)+h^m(I(x,y))+f^K(x,y)\] 
where $h^m$ is a polynomial of degree $m=[K/2]$ (the integer part of $K/2$) in $n$ variables, with vanishing constant and linear terms, and $f_K$ is of higher order $O_{K+1}(x,y)$. 
We denoted $h^m$ by $\mathrm{BNF}_K(H)$.  By uniqueness of the Birkhoff normal form 
we have for $K=2m \geq 4$, a well defined map 
\[ \begin{array}{lll}
\mathrm{BNF}_K : & P_3(2n,K) &\longrightarrow P_2(n,m) \\
     & \tilde{H}_K &\longmapsto h^m = \mathrm{BNF}_K( \tilde{H}_K )= \mathrm{BNF}_K(H).
\end{array} \]

\end{itemize}

Our main result is the following.

\begin{Main}\label{infthm} 
Let $H$ be a real analytic Hamiltonian on $\R^{2n}$ having an elliptic equilibrium point at the origin  with a non-resonant  frequency vector $\alpha$. There exists an open and dense set of full Lebesgue measure  $ \cN_n(\a) \in P_3(2n,K_0)$ such that if  $\tilde{H}_{K_0} \in \cN_n(\a)$, then there exists $r^*,c,c',c''>0$ that depend only on $n,\RRR,\|H\|_R, \a$ and $\tilde{H}_{K_0}$ such that if $r \leq r^*$, then 
$$T(r) \geq \exp\left( c r^{-2} \exp\left(c' \Delta_\a\left(c'' r^{-1}\right)\right)\right). $$
If $\alpha \in \DC(\tau,\gamma)$, there exists an open and dense set of full Lebesgue measure  $ \cN_n(\a) \in P_3(2n,K_0)$ such that if  $\tilde{H}_{K_0} \in \cN_n(\a)$, then 
 there exists $r^*$ and $C$ that depend only on $n,\RRR,\|H\|_R, \a$, and $\tilde{H}_{K_0}$ such that if $r \leq r^*$, then 
$$T(r) \geq \exp\left(\exp\left( C {r}^{-\frac{1}{\tau+1} } \right)\right).$$

\end{Main}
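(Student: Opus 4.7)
The plan is to reduce the problem to a condition on the truncated Birkhoff normal form $h^{m_0}=\mathrm{BNF}_{K_0}(H)$ which implies double exponential stability, and then to establish the topological and measure-theoretic genericity of this condition on $P_2(n,m_0)$, transferring it through surjectivity of the map $\mathrm{BNF}_{K_0}$ to the desired genericity on $P_3(2n,K_0)$. In spirit this mimics the scheme of \cite{MG95,Bou11etds,Nie13}, but the point is to replace the (non-topologically-generic) conditions used previously by a verifiable finite-order condition on $h^{m_0}$.

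First, I would perform the standard Birkhoff normal form at order $K_0$ to put $H$ in the form $H\circ\Phi^{K_0}(x,y)=\alpha\cdot I(x,y)+h^{m_0}(I(x,y))+f^{K_0}(x,y)$, with $f^{K_0}=O_{K_0+1}(x,y)$. The integrable approximation $g(I):=\alpha\cdot I+h^{m_0}(I)$ now plays the role of an ``unperturbed'' Hamiltonian in (singular) action variables. On the set $\mathcal{N}_n(\alpha)$ I would impose a geometric nondegeneracy condition on $g$ of Nekhoroshev steepness type, strong enough to activate the stability scheme of Niederman \cite{Nie07} and Bounemoura--Niederman \cite{BN12,Bou11etds}. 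Given such steepness and the non-resonance of $\alpha$, a first Nekhoroshev-type analysis (implemented à la \cite{Nie98,BFG98,Pos99a} without action-angle coordinates, but using the quasi-convexity supplied by the steepness of $g$) bounds $\|I(x(t),y(t))-I_0\|$ by a small fraction of $r$ on a time $T_1(r)$ which is at least exponentially large in terms of $\Delta_\alpha(r^{-1})$; then a second Birkhoff-type normalization inside this confinement region, performed as in \cite{MG95}, produces an additional Nekhoroshev exponent and yields precisely the double exponential bound announced (and its Diophantine specialization via \eqref{deltadioph}).

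The more delicate part is the genericity statement. I would first argue that the condition on $g$ required by the dynamical step is open in $P_2(n,m_0)$ (it is formulated through non-vanishing of finitely many determinants / positivity of certain inf-type quantities on compact sets), dense (obstructions are algebraic), and of full Lebesgue measure (its complement is cut out by proper real-algebraic conditions, hence is a semialgebraic set of strictly smaller dimension, in particular Lebesgue-null). The choice $K_0=n^2+4$, $m_0=[K_0/2]$ should correspond exactly to the minimal degree at which generic steepness of a polynomial in $n$ variables can be certified, following the arguments available in the literature on prevalent steepness. To transfer this to $P_3(2n,K_0)$, I would show that the linear map $\mathrm{BNF}_{K_0}:P_3(2n,K_0)\to P_2(n,m_0)$ is surjective: its preimage $\mathcal{N}_n(\alpha):=\mathrm{BNF}_{K_0}^{-1}(\mathcal{G})$ of the generic set $\mathcal{G}\subset P_2(n,m_0)$ is then open, dense, and of full measure. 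Surjectivity is a classical consequence of the explicit cohomological equations defining the Birkhoff normalization when $\alpha$ is non-resonant, since one can realize any prescribed $h^{m_0}$ by an obvious choice of the resonant monomials in $\tilde{H}_{K_0}$.

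The principal obstacle, and where the bulk of the work should concentrate, is the \emph{quantitative} dynamical step coupled with the genericity of the steepness-type condition at the specific truncation order $m_0=[(n^2+4)/2]$: one must identify an open-dense-full-measure condition on polynomials of degree $m_0$ that is strong enough to produce Nekhoroshev exponents compatible with a second normalization, while being weak enough to hold on this large set. Running through the Giorgilli--Morbidelli double-normalization bookkeeping without losing the polynomial dependence on $r$ in the exponents (so as to recover the explicit form $\exp(cr^{-2}\exp(c'\Delta_\alpha(c''r^{-1})))$) will require a careful propagation of the analyticity widths through both normalization rounds, and a uniform control of the constants in terms of $n$, $R$, $\|H\|_R$, $\alpha$ and $\tilde{H}_{K_0}$ only.
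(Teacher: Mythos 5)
Your overall scheme (reduce to a steepness-type condition on the truncated Birkhoff normal form $h^{m_0}=\mathrm{BNF}_{K_0}(H)$, show this condition is generic in $P_2(n,m_0)$, transfer it back to $P_3(2n,K_0)$, and combine a Nekhoroshev step with a Birkhoff normal form step to produce the double exponential) is aligned with the paper's strategy, and your identification of the critical truncation order $m_0=[(n^2+4)/2]$ as the threshold for generic steepness is correct. However, there are two genuine gaps where the proposed argument would fail as written.

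First, you write that the Nekhoroshev-type analysis can be ``implemented \`a la \cite{Nie98,BFG98,Pos99a} ... using the quasi-convexity supplied by the steepness of $g$.'' Steepness does \emph{not} supply quasi-convexity; the implication runs the other way (convexity and quasi-convexity are strictly stronger). The references you cite rely essentially on convexity, and their proofs (energy-conservation confinement) do not go through under mere steepness. The bulk of the technical work in the paper is precisely to prove a new Nekhoroshev stability theorem for \emph{steep} integrable approximations in the singular setting of an elliptic equilibrium (Theorem~\ref{mainthm03}), using periodic averagings and a multi-step algorithm rather than the convex mechanism. Without this, your ``first Nekhoroshev-type analysis'' step would break down at exactly the point where the generic steepness assumption replaces the non-generic convexity assumption — which is the whole point of the improvement over \cite{MG95,Bou11etds,Nie13}.

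Second, the genericity transfer has a gap. You describe $\mathrm{BNF}_{K_0}$ as a linear surjection and conclude that the preimage of an open, dense, full-measure set is open, dense, and of full measure. But $\mathrm{BNF}_{K_0}$ is an algebraic map that is not linear: the degree-$j$ coefficients of $h^m$ depend polynomially (through iterated Poisson brackets) on the jet of $H$ at lower orders. Moreover, even if the map were surjective, surjectivity alone does not give that preimages of Lebesgue-null sets are Lebesgue-null. The paper establishes the measure statement differently: it observes that the partial map $F^K(Q)=\mathrm{BNF}_K(\tilde{H}_K+\tilde{Q})$, obtained by shifting $\tilde H_K$ by the ``resonant'' monomials $\tilde Q(\xi)=Q(I(\xi))$, has a triangular structure $F^K_j(Q)=Q_j+\mathcal{F}^K_j(\tilde{H}_{2j},Q_2,\dots,Q_{j-1})$ and hence has Jacobian one, so it preserves Lebesgue measure on each slice; Fubini--Tonelli then gives the full-measure statement on $P_3(2n,K_0)$, while algebraicity of $\mathrm{BNF}_{K_0}$ gives that the exceptional set is semi-algebraic of positive codimension, hence both closed with empty interior (openness and density of the complement). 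Your surjectivity argument, as stated, neither establishes linearity (which is false) nor the measure claim.
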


Observe that since $c'$ and $c''$ will not depend on $\a$ (see \eqref{ccc}), it follows from \eqref{deltadioph} that the constant $C$ that appears under the double exponential in the Diophantine case is actually of the form  $C=\gamma^{\frac{1}{\tau+1} }C'$ where $C'$ does not depend on $\a$. 
Theorem \ref{infthm} improves all previous results contained in \cite{MG95}, \cite{Bou11etds} and \cite{Nie13}. In the course of its proof, we will also have to extend the results on exponential stability contained in \cite{Nie98}, \cite{BFG98} and \cite{Pos99a}.

\begin{remark}
{\rm Observe that even though $\Delta_\a(r^{-1})$ goes to infinity as $r$ goes to zero, the speed of convergence can be arbitrarily slow but the statement implies that $T(r)$ is always at least of order $\exp(cr^{-2})$. From the proof of the theorem, one can easily obtain the following statement: fixing $k \in \N^*$, $k \geq 2$, and allowing the constants $r_k^*$ and $c_k$ to depend also on $k$, one has
$$T(r) \geq \exp\left( c_k r^{-k} \exp\left(c' \Delta_\a\left(c'' r^{-1}\right)\right)\right)$$
which is always at least of order $\exp(c_kr^{-k})$. As a matter of fact, the weaker estimate
$$T(r) \geq \exp\left( c_k r^{-k}\right)$$
can be obtained if one only assumes $\alpha$ to be non-resonant up to a sufficiently high order depending on $k$ and $n$.}
\end{remark}

\begin{remark} {\rm The Diophantine condition $\a \in \DC(\tau,\gamma)$ is sometimes called an asymptotic Diophantine condition. A strictly weaker condition, called uniform Diophantine condition, requires the existence of an increasing  sequence $K_j \in \N$, $K_j \to \infty$, such that $|k\cdot\alpha| \geq \gamma K_j^{-\tau}$ for every $k \in \Z^n \setminus \{0\}$ with $|k|_1\leq K_j$. This gives  $\Psi_\alpha(K_j) \leq \gamma K_j^\tau$ and  Theorem~\ref{infthm} would then imply that   there exists a sequence $r_j \to 0$ such that } 
$$T(r_j) \geq \exp\left(\exp\left(C r_j^{-\frac{1}{\tau+1} } \right)\right).$$
\end{remark}

The notion of \emph{stably steep} polynomials, which can be implicitly found in the work of Nekhoroshev (\cite{Nek73}), will be important in the proof of Theorem \ref{infthm}.

\begin{definition}[Stably steep polynomials] \label{stabsteep} 
A polynomial $P_0 \in P_2(n,m)$ is called stably steep if there exist a neighborhood $V$ of $P_0$ in $P_2(n,m)$ and positive constants $C,\delta$ such that for any integer $l\in [1,n-1]$, any $P \in V$ and any vector subspace $\Lambda \subseteq \R^n$ of dimension $l$, letting $P_\Lambda$ be the restriction of $P$ to $\Lambda$, the inequality
\[ \max_{0 \leq \eta \leq \xi}\;\min_{||x||=\eta, \; x \in \Lambda}||\nabla P_\Lambda(x)||>C\xi^{m-1} \]
holds true for all $0 < \xi \leq\delta$, where $||\,.\,||$ is the usual Euclidean norm defined in~\eqref{norme2}.
\end{definition}

The set of stably steep polynomials in $P_2(n,m)$ will be denoted by $SS(n,m)$.

Theorem \ref{infthm} will clearly follow from the combination of the following two statements, Theorems \ref{mainthm01} and \ref{mainthm02}, with the set $\cN_n(\a)$ being defined as 
$\cN_n(\a):= \mathrm{BNF}_{K_0}^{-1}(SS(n,m_0))$.

Our first statement is that the set of Hamiltonians with stably steep BNF of order $K_0$ have doubly exponentially stable equilibria.  

\begin{Main}\label{mainthm01} 
Let $H$ be a real analytic Hamiltonian on $\R^{2n}$ having an elliptic equilibrium point at the origin  with a non-resonant  frequency vector $\alpha$. 
If \[ \mathrm{BNF}_{K_0}({H}) =h^{m_0} \in SS(n,m_0)\] 
then the conclusions of Theorem \ref{infthm} hold.
\end{Main}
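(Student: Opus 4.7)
The plan is to combine two exponential mechanisms: an outer Birkhoff normalization to optimal order $K=K(r)$, producing an integrable polynomial $h^m$ of degree $m=[K/2]$ plus an exponentially small analytic remainder $f^K$, and an inner Nekhoroshev confinement applied to that normal form, fed by the steepness ensured by the hypothesis $\mathrm{BNF}_{K_0}(H)\in SS(n,m_0)$.

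First I would invoke the standard quantitative Birkhoff procedure for a real-analytic Hamiltonian near a non-resonant elliptic equilibrium, as in \cite{GDFGS} or \cite{DG96b}. For every $K\geq K_0$ it produces a real-analytic symplectic map $\Phi^K$, close to the identity on a ball $\cB_{2r}$ with $r$ small enough, such that $H\circ\Phi^K(x,y)=\a\cdot I(x,y)+h^m(I(x,y))+f^K(x,y)$, together with a bound of the shape $\|f^K\|_{\cB_{2r}}\leq r^{K+1}(c_1 K\Psi_\a(K))^{K}$. The optimal truncation is the largest $K$ that keeps $r\,K\Psi_\a(K)$ below an absolute constant, namely $K(r)=\Delta_\a(c'' r^{-1})$, which yields $\|f^K\|_{\cB_{2r}}\leq\exp(-c'\Delta_\a(c''r^{-1}))$ and a map $\Phi^K$ that remains $O(r)$-close to the identity throughout.

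Second, I need to transfer the stably steep property of $h^{m_0}$ to the larger polynomial $h^m$. Writing $h^m=h^{m_0}+g$ where $g$ collects the monomials of degree $\geq m_0+1$, for any subspace $\Lambda\subseteq\R^n$ of dimension $l\in[1,n-1]$ and any $x\in\Lambda$ with $\|x\|=\eta$ small one has $\|\nabla g_\Lambda(x)\|=O(\eta^{m_0})$, which is negligible compared to the lower bound $C\eta^{m_0-1}$ coming from Definition \ref{stabsteep} applied to $h^{m_0}$; the size of $g$ is controlled uniformly in $K$ using the classical analyticity estimates for the Birkhoff polynomials. Consequently, for some $\delta'\leq\delta$ independent of $r,\Lambda,K$, the polynomial $h^m$ satisfies the same steepness inequality on $\Lambda$ with constant $C/2$ instead of $C$. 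This is precisely the point of the definition of $SS(n,m_0)$: the steepness estimate is stable under small perturbations and the addition of higher order terms.

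Third, I would feed the data $\a\cdot I+h^m(I)$ and $\varepsilon:=\|f^K\|_{\cB_{2r}}$ into a Nekhoroshev estimate tailored to elliptic equilibria in the singular symplectic polar coordinates of the origin, extending \cite{Nie98,BFG98,Pos99a} from the convex case to the genuinely steep case provided by step~2. With the steepness constants independent of $r$, the usual geography of resonances on the rescaled action domain $\{I\simeq r^2\}$ yields a confinement time of the form $T(r)\geq c\,r^{-2}\exp(c\,\varepsilon^{-b})$ for some exponent $b=b(n)>0$. Substituting the estimate of $\varepsilon$ from step~1 gives $T(r)\geq\exp(c\,r^{-2}\exp(c'\Delta_\a(c''r^{-1})))$ after absorbing constants; in the Diophantine case \eqref{deltadioph} immediately collapses this bound to the stated double exponential.

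The main obstacle is step~3. In standard action-angle coordinates the steep version of Nekhoroshev is classical, but implementing it on a neighborhood of an elliptic equilibrium must cope with the singular locus $\{I_j=0\}$ where polar coordinates degenerate; this is exactly the point that Nekhoroshev left as a conjecture in this setting and that, prior to the present paper, had been settled only under convexity in \cite{Nie98,BFG98,Pos99a}. The stably steep hypothesis is tailored so that the steepness constants do not degenerate either under rescaling toward the origin or under small perturbations, and this uniformity is precisely what is needed to run the Nekhoroshev resonance analysis on each nested ball $B_r$ without loss of constants; once this extension is in place, the double exponential bound follows by the optimization carried out above.
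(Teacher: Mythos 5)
Your proposal follows the same architecture as the paper's proof: (i) a quantitative Birkhoff normal form to optimal order $K(r)=\Delta_\a(c''r^{-1})$, producing $h+f^K$ with $\varepsilon=\|f^K\|$ exponentially small in $K$; (ii) a transfer of the stably steep hypothesis on $h^{m_0}$ into a steepness statement for the integrable part; and (iii) a Nekhoroshev-type confinement near the elliptic equilibrium for steep $h$. The identification of the key difficulties — in particular, that the whole weight of the argument rests on extending Nekhoroshev's theorem near an elliptic equilibrium from the convex to the steep case — is accurate.

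There is, however, a genuine gap in step~2. What you establish is that the polynomial $h^m=h^{m_0}+g$ satisfies a steepness-type inequality \emph{at the origin}: the lower bound $C\eta^{m_0-1}$ from $h^{m_0}$ dominates the $O(\eta^{m_0})$ contribution of $g$ on each $\Lambda$ for small $\eta$. But what the Nekhoroshev machinery (Theorem~\ref{mainthm03}, and ultimately the lemma on almost-plane curves) requires is that the \emph{function} $h(I)=\alpha\cdot I+h^m(I)$ be steep on the entire domain $D_\rho$ in the sense of Definition~\ref{funcsteep}: for every base point $I\in D_\rho$ and every affine subspace $\lambda=I+\Lambda$ one must have
\[
\max_{0\leq\eta\leq\xi}\ \min_{\|I'-I\|=\eta,\ I'\in\lambda\cap D_\rho}\|\nabla h_\lambda(I')-\nabla h_\lambda(I)\|>C\xi^{m_0-1}
\]
for all $0<\xi\leq\delta$, uniformly in $I$. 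Your comparison of $\nabla h^{m_0}_\Lambda$ and $\nabla g_\Lambda$ is carried out only at $I=0$; steepness at the origin of a polynomial does not automatically propagate to steepness at nearby base points. This is precisely what the paper's Theorem~\ref{th.taylor} supplies: at each $I\in D_\rho$, consider the Taylor polynomial $\mathcal{H}_I=T_{m_0}h(I)-T_1h(I)-T_0h(I)$; since $\mathcal{H}_0=h^{m_0}$ and $\mathcal{H}_I$ is $O(|I|)$-close to $\mathcal{H}_0$ (with constants controlled by $\|\nabla^2 h\|_{m_0+1,\rho}\leq b(m_0+1)$ from Proposition~\ref{propBirkhoffutile}), the open-neighborhood clause in Definition~\ref{stabsteep} forces $\mathcal{H}_I$ to remain in the stable set $V$ for $\rho$ small, and the Taylor remainder is then absorbed. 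You gesture at this ("stable under small perturbations", "constants do not degenerate under rescaling"), but the step where the stability of $SS$ is actually exploited — for the Taylor polynomial at a nonzero base point $I$, not for perturbations of $h^{m_0}$ as a polynomial — is absent. Without it, the steepness input to the Nekhoroshev argument on $D_\rho$ is not in place, and step~3 cannot start.
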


The second statement shows that the condition $\mathrm{BNF}_{K_0}({H})= \mathrm{BNF}_{K_0}(\tilde{H}_{K_0}) \in SS(n,m_0)$ is generic in a strong sense.

\begin{Main}\label{mainthm02} For any non-resonant $\a \in \R^n$, the complement of $\mathrm{BNF}_{K_0}^{-1}(SS(n,m_0))$ in $P_3(2n,K_0)$ is contained in a  semi-algebraic subset of positive codimension. In particular, $\mathrm{BNF}_{K_0}^{-1}(SS(n,m_0))$ is a dense open subset of $P_3(2n,K_0)$ of full Lebesgue measure. 
\end{Main}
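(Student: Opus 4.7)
The plan is to split the statement into two independent ingredients: an algebraic genericity statement for stable steepness inside the target space $P_2(n,m_0)$, and a compatibility statement saying that $\mathrm{BNF}_{K_0}$ pulls back positive-codimension semi-algebraic subsets to positive-codimension semi-algebraic subsets.

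For the compatibility part, I would first observe that $\mathrm{BNF}_{K_0}\colon P_3(2n,K_0)\to P_2(n,m_0)$ is a polynomial map. This is immediate from the standard iterative construction of the Birkhoff normal form: at each degree $j=3,\dots,K_0$ one solves a homological equation whose only non-polynomial operation is division by the small divisors $k\cdot\alpha$ for $1\le |k|_1\le K_0$, all of which are nonzero by the non-resonance assumption on $\alpha$. The key structural fact is that $\mathrm{BNF}_{K_0}$ admits the natural \emph{linear} section $s\colon P_2(n,m_0)\to P_3(2n,K_0)$ defined by $s(h)(x,y):=h(I(x,y))$: since $h\in P_2(n,m_0)$ starts at order two in $I$, the polynomial $s(h)$ starts at order four in $(x,y)$ and lies in $P_3(2n,K_0)$, and since $h(I)$ Poisson-commutes with $\alpha\cdot I$ it is already in Birkhoff normal form, so $\mathrm{BNF}_{K_0}\circ s=\mathrm{Id}_{P_2(n,m_0)}$. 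Differentiating this identity shows that $D\mathrm{BNF}_{K_0}$ is surjective at every point of $s(P_2(n,m_0))$; by openness of the maximal-rank condition, $\mathrm{BNF}_{K_0}$ is a submersion on a Zariski-open subset $U\subset P_3(2n,K_0)$ whose complement is a proper algebraic subset, hence of positive codimension. For any semi-algebraic $\Sigma\subset P_2(n,m_0)$ of positive codimension, the intersection $\mathrm{BNF}_{K_0}^{-1}(\Sigma)\cap U$ is then of positive codimension by the submersion property, while $\mathrm{BNF}_{K_0}^{-1}(\Sigma)\setminus U$ is contained in the positive-codimension non-submersion locus, yielding that $\mathrm{BNF}_{K_0}^{-1}(\Sigma)$ is semi-algebraic of positive codimension in $P_3(2n,K_0)$.

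For the genericity part, I would establish (or appeal to the Nekhoroshev-type result essentially contained in \cite{Nek73}) that the complement $P_2(n,m_0)\setminus SS(n,m_0)$ is contained in a semi-algebraic subset of positive codimension. The specific choice $m_0=[(n^2+4)/2]$ is tailored precisely so that polynomials of this degree in $n$ variables are generically stably steep. The strategy is to work on the Grassmannian bundle over $P_2(n,m_0)$ of pairs $(P,\Lambda)$ with $\Lambda$ an $l$-dimensional subspace of $\R^n$, characterise the failure of the $\min\max$ inequality of Definition~\ref{stabsteep} as an algebraic condition on $(P,\Lambda)$ (through resultants built from the critical set of $\|\nabla P_\Lambda\|$), bound the codimension of this bad locus in the Grassmannian bundle for each $l\in[1,n-1]$, and then project back to $P_2(n,m_0)$ using semi-algebraic quantifier elimination (Tarski--Seidenberg). \emph{This is the step I expect to be the main obstacle:} the codimension count must survive both the projection and the union over all $l$, and it is precisely this constraint that forces $m_0$ to be of order $n^2/2$.

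Combining the two parts, letting $\Sigma$ be the positive-codimension semi-algebraic subset containing $P_2(n,m_0)\setminus SS(n,m_0)$, the complement $P_3(2n,K_0)\setminus\mathrm{BNF}_{K_0}^{-1}(SS(n,m_0))$ is contained in $\mathrm{BNF}_{K_0}^{-1}(\Sigma)$, hence in a semi-algebraic subset of positive codimension. Since $SS(n,m_0)$ is open by definition of stable steepness and $\mathrm{BNF}_{K_0}$ is continuous, $\mathrm{BNF}_{K_0}^{-1}(SS(n,m_0))$ is open; being the complement of a positive-codimension semi-algebraic set, it is in addition dense and of full Lebesgue measure, as desired.
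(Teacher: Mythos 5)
Your proof is correct, and the second half — transferring positive codimension from $P_2(n,m_0)$ back to $P_3(2n,K_0)$ — takes a genuinely different route from the paper's. Both proofs start from the Nekhoroshev-type genericity result for $SS(n,m_0)$ in $P_2(n,m_0)$ (the paper's Theorem~\ref{proputile2}, proved in Appendix~\ref{app.steep}), which you correctly flag as the non-trivial ingredient. The paper then exploits the affine slices $\{\tilde{H}_{K_0}+\tilde{Q}\}$ and the triangular structure $F_j^K(Q)=Q_j+\mathcal{F}_j^K(\tilde{H}_{2j},Q_2,\dots,Q_{j-1})$ to show that the restricted map $F^{K_0}$ is a measure-preserving diffeomorphism (Lemma~\ref{lemme2}); Fubini--Tonelli then yields zero Lebesgue measure for the bad set, and semi-algebraicity (Lemma~\ref{lemme1}) upgrades this to positive codimension. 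You instead encode the same slicing via the linear section $s(h)(x,y)=h(I(x,y))$, observe that $\mathrm{BNF}_{K_0}\circ s=\mathrm{Id}$ by uniqueness of the normal form, deduce surjectivity of $D\mathrm{BNF}_{K_0}$ along $s(P_2(n,m_0))$, and conclude with a submersion/dimension-count on a Zariski-open set $U$ whose algebraic complement automatically has positive codimension. Your route is somewhat cleaner in that it never needs the detailed unipotent structure of the normal-form map (only algebraicity plus the existence of one section), whereas the paper's route is more explicit and establishes the stronger fact that each slice-restriction is exactly measure-preserving. Both are valid; the only shared heavy lifting is Theorem~\ref{proputile2}, which neither argument reproves here.
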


\begin{proof}[Proof of Theorem \ref{infthm}] Putting together Theorem \ref{mainthm01} and \ref{mainthm02} immediately yields Theorem \ref{infthm} if we take  $\cN_n(\a)= \mathrm{BNF}_{K_0}^{-1}(SS(n,m_0))$. 
\end{proof}

To prove Theorem  \ref{mainthm02}, 
we will show that the complement of $SS(n,m_0)$ in $P_2(n,m_0)$ is contained in a semi-algebraic subset of codimension at least one. This will be done in Sections  \ref{subsec.steep.generic}, \ref{subsec.steep.BNF} and Appendix \ref{app.steep}.

Theorem \ref{mainthm01} will follow (see Section \ref{nekhoimpliesstability}) from a version of the Nekhoroshev exponential stability result adapted to our singular perturbation setting, that we now present and that will be proven in Section \ref{sec.stability}. 

\begin{itemize}

\item For vectors in $\C^{n}$, it will be convenient to also use the sup norm $|\,.\,|$ defined as
\begin{equation}\label{norme3}
|I|:=\max\{|I_1|,\dots,|I_n|\}, \quad I=(I_1,\dots,I_n).
\end{equation} 
This norm allows an easier comparison between $I(z) \in \C^n$ and $z \in \C^{2n}$: indeed, we have $|I(z)|\leq \|z\|^2/2$ and the equality holds true if $z \in \R^{2n}$.

\item Given $r>0$, we define the domain $\mathcal{D}_r$ to be the open ball centered at the origin in $\C^n$ of radius $r^2/2$ with respect to the norm $|\,.\,|$:
\begin{equation*}
\mathcal{D}_r:=\{ I \in \C^n \; | \; |I|<r^2/2 \}
\end{equation*}  
and we let $D_r:=\mathcal{D}_r \cap \R^n$. This choice is motivated by the fact that if $I: z \in \C^{2n} \mapsto I(z) \in \C^n$, then $I(\mathcal{B}_r) \subseteq \mathcal{D}_r$ and $I(B_r)=D_r \cap \R^n_+$, where $\mathcal{B}_r$ and $B_r$ have been defined in~\eqref{bouleR}. 

\item We define $\|\,.\,\|_r$ to be the sup norm for functions defined on $\mathcal{B}_r$ or on $\mathcal{D}_r$. Extending the norm $\|\,.\,\|$ initially defined for vectors in $\C^n$ and $\C^{2n}$ (respectively in~\eqref{norme} and in~\eqref{norme2}) to tensors in $\C^n$ and $\C^{2n}$, we extend the sup norm $\|\,.\,\|_r$ for tensor-valued functions defined on $\mathcal{B}_r$ or on $\mathcal{D}_r$. The same notation $\|\,.\,\|_r$ will be used also for the real domains $B_r$ and $D_r$: this will not cause confusion as it will be clear from the context if it is the complex or the real domains that are considered.   

\item We consider a  Hamiltonian $H$  of the form
\begin{equation}\label{Ham}\tag{$*$}
H(z)=h(I(z))+f(z), \quad h : \mathcal{D}_r \rightarrow \C, \quad f : \mathcal{B}_r \rightarrow \C 
\end{equation}
which is real analytic and such that \
\begin{equation}\label{condsteep}
\|\nabla h\|_r \leq E, \quad \|\nabla^2 h\|_r \leq F, \quad \|X_f\|_r \leq \varepsilon  
\end{equation}
where $X_f$ is the Hamiltonian vector field associated to $f$

\item The integrable Hamiltonian $h$ is supposed to be \emph{steep} on the domain $D_r$, as defined below.

\begin{definition}\label{funcsteep} 
A differentiable function $h : D_r \rightarrow \R$ is steep if there exist positive constants $C,\delta,p_l$, for any integer $l\in [1,n-1]$, and $\kappa$ such that for all $I \in D_r$, we have $||\nabla h(I)|| \geq \kappa$ and, for all integer $l\in [1,n-1]$, for all vector space $\Lambda \in \R^n$ of dimension $l$, letting $\lambda=I+\Lambda$ the associated affine subspace passing through $I$ and $h_\lambda$ the restriction of $h$ to $\lambda$, the inequality
\[ \max_{0 \leq \eta \leq \xi}\;\min_{||I'-I||=\eta, \; I' \in \lambda \cap D_r}||\nabla h_\lambda(I')-\nabla h_\lambda(I)||>C\xi^{p_l} \]
holds true for all $0 < \xi \leq\delta$. We say that $h$ is $(r,\kappa,C,\delta,(p_l)_{l=1,\ldots,n-1})$-steep and, if all the $p_i=p$, we say that  $h$ is $(r,\kappa,C,\delta,p)$-steep.
\end{definition}
\end{itemize}
 
Let us point out that the definition of steepness that we use is not exactly the one given by Nekhoroshev but it is obviously equivalent to it (see \cite{Nek73} or \cite{Nek77}). Indeed, Nekhoroshev only requires steepness for subspaces $\Lambda$ which are orthogonal to $\nabla h(I)$, in which case $\nabla h_\lambda(I)=0$; for subspaces $\Lambda$ such that $\nabla h_\lambda(I)\neq 0$, the inequality in Definition~\ref{funcsteep} is clearly satisfied (and one may even set $p_l=0$ in this case).

\begin{Main}\label{mainthm03} 

Let $H(z)=h(I(z))+f(z)$ be as in~\eqref{Ham} satisfying~\eqref{condsteep}, such that $h$ is $(r,\kappa,C,\delta,(p_l)_{l=1,\ldots,n-1})$-steep. Then there exist $\tilde{r}^*, \tilde{c}, \tilde{c}'>0$, which depend only on $n$, $E$, $F$, $\kappa$, $C$ and $p_l$ for $1 \leq l \leq n-1$ such that if
\begin{equation}\label{seuilsteep}
r \leq \tilde{r}^*, \quad  r\varepsilon \leq \tilde{c}\min\left\{\delta^{2na},r^{4na}\right\}
\end{equation}
where
\[  a:=1+p_1+p_1p_2+\cdots+p_1p_2\dots p_{n-1},\]
then for any solution $z(t)$ of the Hamiltonian flow \eqref{Ham} with $z(0)=z_0 \in B_{r/2}$ we have 
\[ |I(z(t))-I(z_0)|\leq \tilde{c}'(r\varepsilon)^{\frac{1}{2na}}, \quad |t| \leq \exp\left((r\varepsilon)^{-\frac{1}{2na}}\right).  \]
\end{Main}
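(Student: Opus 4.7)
The plan is to adapt Nekhoroshev's exponential stability scheme, in its analytic part (iterative averaging) and its geometric part (confinement via steepness), to the singular setting where symplectic polar coordinates degenerate along the axes $\{I_j=0\}$. All constructions are to be performed directly in the Cartesian variables $z=(x,y)$, which lets us treat points in the bulk and points close to the axes in a uniform way, and which exploits the analyticity of $H$ on the complex ball $\mathcal{B}_r$.

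For the analytic part, I would build around each base point $z_\ast\in\mathcal{B}_{r/2}$, with local frequency $\omega_\ast=\nabla h(I(z_\ast))$, a real-analytic symplectic map $\Phi_\ast$ bringing $H$ into resonant normal form $H\circ\Phi_\ast=h+g_\ast+f_\ast^\sharp$, where $g_\ast$ retains only the Cartesian monomials $z^\alpha\bar z^\beta$ with $(\alpha-\beta)\cdot\omega_\ast$ below a prescribed threshold, and $X_{f_\ast^\sharp}$ is exponentially small. The key observation is that the monomials $z^\alpha\bar z^\beta$ are eigenvectors of the flow of $\omega_\ast\cdot I$ with eigenvalue $i(\alpha-\beta)\cdot\omega_\ast$, so the homological equation at each averaging step is solved term-by-term by division, with no angle variables ever entering the construction. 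Standard cohomological estimates, losing a geometric portion of the analyticity radius at each of $K$ iterations, produce a remainder of size of order $\varepsilon\sigma^K$ for some fixed $\sigma<1$ depending only on $n,E,F$; the optimal choice $K\sim(r\varepsilon)^{-1/(2na)}$ yields the required exponentially small remainder.

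For the geometric part, the action drift obeys $\dot I=\{g_\ast,I\}+\{f_\ast^\sharp,I\}$, where the first term is confined to the span of the resonance module active at $z_\ast$ and the second is exponentially small. The steepness of $h$ on $D_r$ then rules out sustained drift inside any given resonant zone. Implementing this via Nekhoroshev's covering of $D_r$ by resonant blocks of increasing multiplicity and iterating the steepness inequality across codimensions $l=1,\dots,n-1$ produces the exponent $a=1+p_1+p_1p_2+\cdots+p_1p_2\cdots p_{n-1}$, and matching admissible resonance widths with the normal form remainder yields the announced confinement $|I(z(t))-I(z_0)|\leq \tilde c'(r\varepsilon)^{1/(2na)}$ on the exponentially long time interval.

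The main obstacle is handling base points $z_\ast$ for which some component $I_{\ast,j}$ is comparable to or smaller than the target threshold $(r\varepsilon)^{1/(2na)}$: there the full $n$-dimensional resonance geometry is not available, since a partial passage to action-angle variables fails at the corresponding axis. Two features rescue the argument. First, conducting the normal form entirely in Cartesian variables is precisely what lets the analytic part proceed uniformly in $z_\ast$, since the rotations generated by each $I_j$ act on monomials $z^\alpha\bar z^\beta$ irrespective of whether $I_{\ast,j}$ vanishes. Second, any component $I_j(z(t))$ starting below the threshold stays below it on the relevant time scale by a trivial a priori $C^0$ bound, so the Nekhoroshev geometric argument need only be applied to the complementary set of indices. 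The two smallness conditions in~\eqref{seuilsteep} encode, respectively, that the action drift remains within the steepness validity domain (via $r\varepsilon\leq \tilde c\,\delta^{2na}$) and that the normal form is valid on a ball of radius of order $r$ (via $r\varepsilon\leq \tilde c\, r^{4na}$).
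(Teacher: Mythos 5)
Your proposal follows a genuinely different route from the paper. The paper implements the Lochak--Bounemoura--Niederman scheme of \emph{successive periodic approximations}: the algorithm works \emph{along the trajectory}, at step $j$ approximating a single suitable frequency by a $T_j$-periodic vector $\omega_j$ via Dirichlet's theorem, normalizing with respect to $\omega_j$ alone by the elementary time-average $T_j^{-1}\int_0^{T_j}(\cdot)\circ X_{\omega_j}^t\,dt$, and using steepness together with the observed action drift of the actual solution to produce the next, linearly independent, periodic vector $\omega_{j+1}$. The exponent $a$ then emerges from the $n$-step recursion on the periods $T_j\lesssim(T_{j-1}Q)^{p_{n-j}}Q^{n-1}$. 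You instead propose the classical Nekhoroshev \emph{geography of resonances}: a global covering of the action domain by resonant blocks of increasing multiplicity, with a resonant normal form in each block whose resonance module is determined by the near-resonances of $\omega_*=\nabla h(I(z_*))$. The paper's method is local-in-orbit and never needs to understand the global arithmetic structure of the frequency map near the singular axes; this is precisely why it was selected here. Your method is global and requires controlling the block geometry uniformly up to the boundary of the positive orthant. Your analytic observation (that the homological equations diagonalize in the monomials $z^\alpha\bar z^\beta$, so no angle variables are ever introduced) is correct and is also the reason the paper's averaging works in Cartesian variables.

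The genuine gap is in your treatment of the near-axis base points. You write that ``any component $I_j(z(t))$ starting below the threshold stays below it on the relevant time scale by a trivial a priori $C^0$ bound, so the Nekhoroshev geometric argument need only be applied to the complementary set of indices.'' This is not true, and it is the heart of the matter, not a peripheral reduction. If $I_j(0)\lesssim(r\varepsilon)^{1/(2na)}$, the only \emph{a priori} estimate available before any normal form is $|\dot I_j|=|\{I_j,H\}|\lesssim\|z\|\,\|X_H\|\lesssim r^2$, which confines $I_j$ below twice the threshold only for times of order $(r\varepsilon)^{1/(2na)}/r^2$ --- polynomially short, not exponentially long. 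Keeping $I_j$ small for exponentially long times is exactly the confinement one is trying to prove; it cannot be assumed. Moreover, one cannot simply ``drop'' the index $j$ from the geometric argument: the steepness hypothesis is a statement about the restriction of $h$ to \emph{arbitrary} affine subspaces $\lambda=I_0+\Lambda$, and the affine subspaces that arise from the resonance modules near the axis $\{I_j=0\}$ generically involve the $j$-th direction. Discarding that direction would mean applying steepness of the wrong restriction of $h$ on the wrong domain, and there is no a priori reason the drift should be orthogonal to $e_j$. To make the resonant-block route work one would have to construct a covering that degenerates gracefully as $I_j\to0$ and prove uniform non-overlapping and confinement estimates there --- a substantial and, to my knowledge, unpublished piece of work in the steep (non-convex) case. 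The paper sidesteps this entirely: by approximating the frequency at the \emph{current} point $I(z_j)$ by a periodic vector and averaging over one period, it never needs a global geography, never distinguishes interior from near-axis points, and obtains the confinement in the direction $\omega_j$ directly from the commutation $\{l_{\omega_j},g_j^+\}=0$ in the normal form. You should replace the ``trivial $C^0$ bound'' claim by an actual argument, or switch to the periodic-approximation scheme.
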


\begin{remark}
{\rm We will prove in fact a slightly more general and more precise statement (but whose formulation is also more cumbersome): there exist positive constants $\tilde{c}_1$, $\tilde{c}_2$, $\tilde{c}_3$, $\tilde{c}_4$, $\tilde{c}_5$, $\tilde{c}_6$ and $\tilde{c}_7$, which depend only on $n,E,F$ and on the steepness constants $\kappa, C,p_l$, for $1 \leq l \leq n-1$, such that for any solution $z(t)$ with $z(0)=z_0 \in B_{r/2}$, if
\begin{equation}\label{seuilsteep2}
r\varepsilon \leq \min\left\{\tilde{c}_1,\tilde{c}_2\delta^{2na},\tilde{c}_3r^{4na},\tilde{c}_4r^{\frac{2a}{a-a'}}\right\}
\end{equation}
where $a$ is as above and 
\[ a':=1+p_2+p_2p_3+\cdots+p_2p_3\dots p_{n-1}, \]
then
\[ |I(z(t))-I(z_0)|\leq \tilde{c}_5(r\varepsilon)^{\frac{1}{2na}}, \quad |t| \leq \tilde{c}_6r^{-1}(r\varepsilon)^{-\frac{1}{2na}} \exp\left(\tilde{c}_7r^{-1}(r\varepsilon)^{-\frac{1}{2na}}\right).  \]
This statement obviously implies the statement of Theorem~\ref{mainthm03}. Let us also add that using this more precise statement, one can easily obtain a more precise statement in Theorem~\ref{infthm}.
}
\end{remark}

\subsection{Comments, open questions and prospects}\label{s22}

It is natural to ask whether our main result, Theorem~\ref{infthm}, can be improved, and so we can ask the following two questions.

\begin{question}
Does Theorem~\ref{infthm} remains true without assuming $\mathrm{BNF}_{K_0}(\tilde{H}_{K_0}) \in SS(n,m_0)$?
\end{question}

\begin{question}
Is the estimate on the time $T(r)$ in Theorem~\ref{infthm} essentially optimal?
\end{question}

A main difficulty in these questions is related to the fact that the construction of an unstable elliptic equilibrium point in the analytic category is  a wide open problem as we emphasized in the Introduction. Concerning the second question, let us just mention that it may be possible to give an answer in the Gevrey category (a regularity which is intermediate between smooth and analytic). Indeed, on the one hand, one should expect that the statement of Theorem~\ref{infthm} holds true for Gevrey Hamiltonians, with only different constants. On the other hand, using the methods in \cite{MS02}, it might be possible to construct an unstable elliptic equilibrium point in the Gevrey category, with a time of stability which is a double exponential (the fact that one can construct a Lyapunov unstable elliptic equilibrium point in the Gevrey category follows directly from \cite{Dou88}, but the real difficulty is to get an estimate on the time of instability).

Then it is also natural to ask whether our result holds true for a quasi-periodic invariant Lagrangian torus, or more generally, for a quasi-periodic normally elliptic and reducible invariant torus (which includes both elliptic equilibrium points and quasi-periodic invariant Lagrangian tori as particular cases). This general case is described by a Hamiltonian of the form
\[ H(\theta,J,x,y)=\beta\cdot J+\alpha \cdot I(x,y)+F(\theta,I,x,y) \] 
where $(\theta,J) \in \T^m \times \R^m$ are action-angle coordinates, $(x,y)$ symplectic coordinates around the origin in $\R^{2n}$ and $F$ is at least of order $2$ in $I$ and $3$ in $(x,y)$. The set $\{(J,x,y)=0\}=\{(J,x,y) \; | \; J=0,\,I(x,y)=0\}$ is a normally elliptic torus of dimension $n$ in a $n+m$ degrees of freedom Hamiltonian, and the question is as follows.

\begin{question}
Assuming that the vector $(\beta,\alpha) \in \R^{m+n}$ is Diophantine and $H$ is real-analytic, does Theorem~\ref{infthm} extends to this setting in the following sense: if $(J(0),I(x(0),y(0)))$ is at a distance $r$ of zero in $\R^{n+m}$, with $r$ sufficiently small, is it true that $(J(t),I(x(t),y(t)))$ stays at a distance $2r$ from $0$ for a time $T(r)$ which is doubly exponentially large with respect to $r^{-1/(\tau+1)}$ (where $\tau$ is the exponent of the Diophantine condition on the vector $(\beta,\alpha)$)? 
\end{question}

In a subsequent paper (\cite{BFN15}), we will answer positively the above question in the case of an invariant Lagrangian Diophantine torus that is of particular interest in the study of perturbed integrable systems. Indeed, by KAM theory, it is well-known that invariant Lagrangian Diophantine tori appear for arbitrary small perturbations of generic integrable Hamiltonian systems in action-angle coordinates. Furthermore, these tori are not isolated and appear as a family parametrized by some Cantor set of positive Lebesgue measure (tending to full measure as the size of the perturbation goes to zero). The goal of \cite{BFN15} is to prove that under an additional generic assumption on the integrable Hamiltonian, most of the KAM tori are doubly exponentially stable.

\section{Genericity of steepness and Birkhoff normal forms} \label{sec.steep}

The aim of this section is to give a proof of Theorem \ref{mainthm02} and of the fact that Theorem \ref{mainthm03} implies Theorem \ref{mainthm01}.   
 
\subsection{Genericity of steepness} \label{subsec.steep.generic}

In Appendix \ref{app.steep} we will prove a general result on genericity of stably steep polynomials.

\begin{theorem}\label{proputile2} 
The complement of $SS(n,m_0)$ in $P_2(n,m_0)$ is  contained in a  semi-algebraic subset $ \Upsilon(n,m_0)$ of codimension at least one.  
\end{theorem}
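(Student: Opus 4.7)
The plan is to reduce the statement to a semi-algebraic tameness result about jets of polynomials restricted to subspaces, and then to bound the codimension of the "bad" locus by a parameter count that exploits the specific choice $m_0 = [(n^2+4)/2]$.

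First, I would recast the failure of stable steepness in a semi-algebraic form. A polynomial $P \in P_2(n,m_0)$ fails to lie in $SS(n,m_0)$ iff for every constants $C,\delta>0$ there exist a perturbation $P'$ arbitrarily close to $P$, some dimension $l \in [1,n-1]$, some $\Lambda \in G(l,n)$ and some $\xi \in (0,\delta]$ for which
\[ \max_{0 \leq \eta \leq \xi}\,\min_{\|x\|=\eta,\, x\in\Lambda}\|\nabla P'_\Lambda(x)\| \leq C\xi^{m_0-1}. \]
The Grassmannian $G(l,n)$ is a real algebraic variety, the coefficients of $P'_\Lambda$ are polynomial in the coefficients of $P'$ and in the (Plücker) coordinates of $\Lambda$, and the displayed inequality involves only bounded polynomial quantifiers in $x,\eta,\xi,\Lambda$. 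Applying the Tarski-Seidenberg theorem to this prenex formula and projecting onto the $P$-factor produces a semi-algebraic set $\Upsilon(n,m_0) \subset P_2(n,m_0)$ which contains the complement of $SS(n,m_0)$.

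Second, I would show $\Upsilon(n,m_0)$ has positive codimension by a fiber-wise parameter count, using Nekhoroshev's algebraic reformulation of non-steepness. The idea is that on a fixed subspace $\Lambda$ of dimension $l$, "robust non-steepness" of the restriction $P_\Lambda$ forces an algebraic condition on its coefficients (for instance: the existence of a critical ray along which $\nabla P_\Lambda$ vanishes to order higher than $m_0-1$ at the origin, or equivalently the vanishing of an appropriate resultant/discriminant in the coefficients of $P_\Lambda$). For each $l\in[1,n-1]$, letting $N(l,m_0)=\dim P_2(l,m_0)$ and $d(l)=l(n-l)=\dim G(l,n)$, one bounds the dimension of the stratum in $P_2(n,m_0)$ corresponding to non-steep behavior on some $l$-plane by
\[ \dim P_2(n,m_0) - N(l,m_0) + d(l) + (\text{codim of the algebraic condition inside }P_2(l,m_0)). \]
Since $d(l) \leq \lfloor n^2/4 \rfloor$ and $N(l,m_0) \geq \binom{l+m_0}{m_0}-1-l$, the choice $m_0 = [(n^2+4)/2]$ is calibrated so that $N(l,m_0)$ dominates $d(l)$ by at least one for every $l \in [1,n-1]$, giving a strict codimension. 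Summing over the finitely many $l$, $\Upsilon(n,m_0)$ is contained in a finite union of semi-algebraic strata each of codimension $\geq 1$.

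The main obstacle is the middle step: giving an effective algebraic criterion for non-steepness of $P_\Lambda$ that is both strong enough to be controlled by polynomial conditions on finitely many coefficients and yet captures the full failure of the $\max$-$\min$ inequality. Nekhoroshev's original strategy in \cite{Nek73} is to replace the quantified $\max$-$\min$ by the simpler requirement that $\nabla P_\Lambda$ have a zero of order $m_0-1$ along some direction, and to check that this amounts to the simultaneous vanishing of a controlled list of polynomial equations in the coefficients. Making this precise and tracking the exact codimension lost over $G(l,n)$ as $l$ varies is the delicate part, which is why this step is deferred to Appendix \ref{app.steep}; once it is established, the semi-algebraicity from Step 1 and the count from Step 2 combine to give Theorem \ref{proputile2}.
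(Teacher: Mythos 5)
Your overall strategy---semi-algebraicity via Tarski--Seidenberg, a fibration over the Grassmannian, and Nekhoroshev's algebraic reformulation of non-steepness for polynomials restricted to subspaces---is the right skeleton and matches the paper's proof in spirit. However, there is a genuine error in your Step 2 that invalidates the codimension conclusion.

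You set $N(l,m_0)=\dim P_2(l,m_0)$, $d(l)=l(n-l)=\dim G(l,n)$, and then argue that since ``$N(l,m_0)$ dominates $d(l)$ by at least one,'' the bad stratum has codimension $\geq 1$. But $N(l,m_0)$ is the dimension of the \emph{ambient} space of restrictions; it cancels out in your own counting formula. If the algebraic bad locus inside $P_2(l,m_0)$ has codimension $c(l)$, then the image in $P_2(n,m_0)$ of the incidence variety over $G(l,n)$ has codimension at least $c(l)-d(l)$, and what you actually need is $c(l)\geq d(l)+1$ for every $l$. Comparing $N(l,m_0)$ with $d(l)$ proves nothing: if the bad locus were merely a hypersurface ($c(l)=1$), your count would fail for every $l\geq 2$. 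The key quantitative input that you are missing, and that the paper relies on, is Nekhoroshev's theorem on \emph{stably expanding} polynomials (Theorem~\ref{thmNekho}): the complement of $SE(l,m_0)$ in $P_2(l,m_0)$ lies in a closed semi-algebraic set $\Sigma(l,m_0)$ of codimension $[m_0/2]$. Only with this does the calibration of $m_0=[(n^2+4)/2]$ become effective: one checks $[m_0/2]=[n^2/4]+1$, hence $[m_0/2]-d(l)\geq [m_0/2]-[n^2/4]\geq 1$ for all $1\leq l\leq n-1$. The paper then runs exactly the count you intend, but correctly: it introduces the incidence variety $\Theta(l,n,m_0)\subset P_2(n,m_0)\times L_1(n,l)\times\Sigma(l,m_0)$, computes $\dim\Theta(l,n,m_0)=\dim P_2(n,m_0)+nl-[m_0/2]$, and subtracts the $l^2$-dimensional $GL(l)$-orbit in each fiber of the projection to $P_2(n,m_0)$, yielding codimension $\geq [m_0/2]-l(n-l)\geq 1$. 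Your Step~1 is also doing more work than necessary (and is delicate because of the quantification over neighborhoods $V$ of $P$): the theorem asks only for \emph{containment} of the complement of $SS(n,m_0)$ in a semi-algebraic set, which is obtained directly by taking $\Upsilon(n,m_0)=\bigcup_l\Upsilon(l,n,m_0)$ with $\Upsilon(l,n,m_0)$ the projection of $\Theta(l,n,m_0)$ and then proving (Proposition~\ref{propNekho2}) that $P_2(n,m_0)\setminus\Upsilon(n,m_0)\subseteq SS(n,m_0)$ by a compactness argument over the Grassmannian; there is no need to exhibit the complement of $SS$ as a first-order definable set.
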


Theorem \ref{proputile2} has an immediate consequence on the genericity of  steep functions as will be shown in the following Theorem \ref{th.taylor}. 

Given $p \in \N$, $p \geq 3$ and $\rho>0$, let $\mathcal{C}^{p}(D_\rho)$ be the set of functions $p$ times continuously differentiable on $D_\rho$, and let
$$\vert\vert \nabla^2 h\vert\vert_{{p,\rho}}=\max_{2 \leq j \leq p} \|\nabla^j h\|_\rho < \infty$$
where $\|\,.\,\|_\rho$ is the sup norm on $D_\rho$ of the tensor-valued function $\nabla^j h$, and where we recall that by definition, $D_\rho$ is the (real) open ball of radius $\rho^2/2$ with respect to the sup norm $|\,.\,|$. Given $h\in\mathcal{C}^{p}(D_\rho)$, we denote by $T_{p-1} h(I) \in P(n,p-1)$ the Taylor expansion of $h$ of order $p-1$ at $I \in D_\rho$ (or the $p-1$-jet at $I$). We have the following statement (that will be used later with the value $p=m_0+1$).

\begin{theorem}  \label{th.taylor} Let $h\in\mathcal{C}^{p}(D_\rho)$ be such that $\|\nabla h(0)\|:=\varpi$ and $P_{p-1}:=T_{p-1} h(0) -T_1 h(0)-T_0h(0) \in SS(n,p-1)$. Then, there exists positive numbers $\mu^*,\delta^*$ and $C$ that depend only on $\varpi$, $P_{p-1}$, $\vert\vert \nabla^2 h\vert\vert_{p,\rho}$ and $n$ such that $h$ is $(\mu,\kappa,C,\delta,p-2)$-steep, with
\[ \mu:=\min\{\rho/2,\mu^*\}, \quad \kappa:=\varpi/2, \quad \delta:=\min\{\rho^2/4,\delta^*\}. \]
\end{theorem}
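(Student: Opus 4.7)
My plan is to reduce the steepness of $h$ near the origin to the stable steepness of the polynomial $P_{p-1}$, via a shifted Taylor expansion followed by a controlled perturbation. For each $I\in D_\rho$, introduce the centered Taylor polynomial
\[
\tilde{P}_I(J) := \sum_{k=2}^{p-1}\frac{1}{k!}\,\nabla^k h(I)\,J^k \;\in\; P_2(n,p-1),
\]
so that $\tilde{P}_0 = P_{p-1}$ by the very definition of $P_{p-1}$. Since $h\in\mathcal{C}^p(D_\rho)$, the map $I\mapsto \tilde{P}_I$ is continuous from $D_\rho$ into $P_2(n,p-1)$, with modulus of continuity controlled by $\|\nabla^2 h\|_{p,\rho}$. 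By hypothesis $P_{p-1}\in SS(n,p-1)$, so there exist a neighborhood $V$ of $P_{p-1}$ in $P_2(n,p-1)$ and constants $C_0,\delta_0>0$ for which the inequality in Definition~\ref{stabsteep} holds uniformly for every $P\in V$. I would first choose $\mu_1^*>0$ small enough (depending only on $\varpi$, $P_{p-1}$, $\|\nabla^2 h\|_{p,\rho}$ and $n$) so that, whenever $|I|\leq (\mu_1^*)^2/2$, both $\tilde{P}_I\in V$ and $\|\nabla h(I)-\nabla h(0)\|\leq \varpi/2$ hold. The latter yields $\|\nabla h(I)\|\geq \kappa := \varpi/2$ on $D_{\mu_1^*}$, which is the first condition of Definition~\ref{funcsteep}.

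For the main steepness inequality, fix $I\in D_\mu$ with $\mu\leq \min\{\rho/2,\mu_1^*\}$, an $l$-dimensional subspace $\Lambda\subset\R^n$, set $\lambda:=I+\Lambda$ and write $\tilde{h}_I(J):=h(I+J)-h(I)-\nabla h(I)\cdot J$. A direct computation gives
\[
\nabla h_\lambda(I')-\nabla h_\lambda(I)=\pi_\Lambda \nabla \tilde{h}_I(I'-I),
\]
where $\pi_\Lambda$ denotes the orthogonal projection onto $\Lambda$. Taylor's formula for a $\mathcal{C}^p$ function then yields
\[
\bigl\|\nabla \tilde{h}_I(J)-\nabla \tilde{P}_I(J)\bigr\|\;\leq\; M\,\|J\|^{p-1},
\]
valid whenever $I+J\in D_\rho$, with $M$ depending only on $n$ and $\|\nabla^2 h\|_{p,\rho}$. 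The constraint $\delta\leq \rho^2/4$ in the statement is exactly what is needed to guarantee $I+J\in D_\rho$ for every $J$ with $\|J\|\leq \delta$ when $|I|<\mu^2/2\leq \rho^2/8$, using $|I+J|\leq |I|+\|J\|$.

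Finally, I apply stable steepness to $\tilde{P}_I\in V$: for each $0<\xi\leq \delta_0$ there exists $\eta^*\in(0,\xi]$ such that
\[
\min_{\|x\|=\eta^*,\; x\in\Lambda}\|\pi_\Lambda\nabla\tilde{P}_I(x)\|\;>\;C_0\,\xi^{p-2}.
\]
Combining with the remainder bound, for every $x\in\Lambda$ with $\|x\|=\eta^*\leq \xi$,
\[
\|\pi_\Lambda\nabla\tilde{h}_I(x)\|\;>\;C_0\,\xi^{p-2}-M\,\xi^{p-1}\;=\;\xi^{p-2}(C_0-M\xi).
\]
Setting $\delta^*:=\min\{\delta_0,\,C_0/(2M)\}$ and $C:=C_0/2$, the right-hand side is $\geq C\,\xi^{p-2}$ for all $0<\xi\leq\delta:=\min\{\rho^2/4,\delta^*\}$, which establishes $(\mu,\kappa,C,\delta,p-2)$-steepness of $h$ with $\mu:=\min\{\rho/2,\mu_1^*\}$. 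The only mildly technical point is to keep explicit track of the constants: how $\|\nabla^2 h\|_{p,\rho}$ and the equivalence of the sup and Euclidean norms on $\R^n$ determine the admissible values of $\mu^*$, $\delta^*$ and $M$. No additional idea is required beyond stable steepness of the jet $P_{p-1}$ and the Taylor perturbation described above.
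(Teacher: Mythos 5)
Your proposal is correct and follows essentially the same strategy as the paper's proof: you introduce the centered jet $\tilde{P}_I$ (the paper's $\mathcal{H}_I$), use continuity of $I\mapsto \tilde{P}_I$ to land in the stable-steepness neighborhood of $P_{p-1}$ for $\mu$ small, control the Taylor remainder of $\nabla h$ by $M\|J\|^{p-1}$, and conclude by splitting $C_0\xi^{p-2}-M\xi^{p-1}$ via $\delta^*\leq C_0/(2M)$. The only cosmetic difference is your intermediate function $\tilde{h}_I$ making the identity $\nabla h_\lambda(I')-\nabla h_\lambda(I)=\pi_\Lambda\nabla\tilde{h}_I(I'-I)$ explicit, which the paper treats slightly more implicitly; the constants, thresholds, and logical structure coincide.
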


\begin{proof} [Proof of Theorem \ref{th.taylor}] Let $M:=\vert\vert \nabla^2 h\vert\vert_{p,\rho}$. Observe first that if $\mu^2\leq \varpi/M$ then the condition $\|\nabla h(I)\| \geq \kappa=\varpi/2$ is satisfied for any $I \in D_{\mu}$.     

Fix an arbitrary $I \in D_\mu$, and define $\mathcal{H}_{I}= T_{p-1} h(I)-T_1 h(I)-T_0 h(I) \in P_2(n,p-1)$. Since $\mathcal{H}_{0}=T_{p-1} h(0)-T_1 h(0)-T_0 h(I)=P_{p-1}$ is stably steep, we have the existence of $\tilde{\mu}$ that depends on $M$, $P_{p-1}$, and $n$ such that if $\mu \leq \tilde{\mu}$, $\mathcal{H}_{I}$ is sufficiently close to $P_{p-1}$ so that for all integer $l \in [1,n-1]$, for all vector subspace $\Lambda \subseteq \R^n$ of dimension $l$, letting $\mathcal{H}_{I,\Lambda}$ be the restriction of $\mathcal{H}_{I}$ to $\Lambda$, the inequality
\[ \max_{0 \leq \eta \leq \xi}\;\min_{||x||=\eta, \; x \in \Lambda}||\nabla \mathcal{H}_{I,\Lambda}(x)||>C_0\xi^{p-2} \]
holds true for all $0 < \xi \leq\delta_0$, where $\delta_0$ and $C_0$ are the steepness constant related to $P_{p-1}$. 

Now, we get by the Taylor formula (applied to $\nabla h$ at the order $p-1$) that 
$$ \|\nabla h(I+x) - \nabla h(I)-\nabla \mathcal{H}_{I}(x)\| \leq  M(p-1)! \|x\|^{p-1} $$
provided $I+x \in D_\rho$, which is satisfied if $\mu \leq \rho/2$ and $|x|\leq\|x\| \leq \rho^2/4$. So for $\|x\|\leq \xi \leq\delta$, with $\delta:=\min\{C_0(2M(p-1)!)^{-1},\rho^2/4\}$, we have
$$ \|\nabla h(I+x) - \nabla h(I)-\nabla \mathcal{H}_{I}(x)\| \leq (C_0/2)\xi^{p-2}$$
and then, letting $\lambda=I+\Lambda$,
$$ \|\nabla h_\lambda(I+x) - \nabla h_\lambda(I)-\nabla \mathcal{H}_{I,\Lambda}(x)\| \leq (C_0/2)\xi^{p-2}.$$
From this we eventually obtain
\[ \max_{0 \leq \eta \leq \xi}\;\min_{||x||=\eta, \; x \in \Lambda}||\nabla h_\lambda(I+x) - \nabla h_\lambda(I)||>(C_0/2)\xi^{p-2} \] 
and letting $I'=I+x$, $C:=C_0/2$, $\delta^*:=C_0(2M(p-1)!)^{-1}$ and $\mu^*:=\min\{\tilde{\mu},\sqrt{\varpi/M}\}$, the steepness of $f$ is thus established with the constants given in the statement. \end{proof}

\subsection{Generic steepness of the BNF. } \label{subsec.steep.BNF}

The proof of Theorem~\ref{mainthm02} will be an easy consequence of Theorem \ref{proputile2} and the following two lemmas on the map $\mathrm{BNF}_K$.  

\begin{lemma}\label{lemme1}
The map $\mathrm{BNF}_K$ is algebraic.
\end{lemma}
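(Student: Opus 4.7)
The plan is to exhibit $\mathrm{BNF}_K$ as a finite composition of elementary polynomial operations on the coefficients of $\tilde H_K$, making the map polynomial and thus \emph{a fortiori} algebraic.

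First, I would pass to the standard complex symplectic variables $(\xi,\eta)$ in which the quadratic part becomes $H_2=i\sum_{j=1}^n\a_j\xi_j\eta_j$, so that the operator $\mathrm{ad}_{H_2}=\{H_2,\cdot\}$ is diagonal on monomials: $\{H_2,\xi^p\eta^q\}=i(q-p)\cdot\a\,\xi^p\eta^q$. The resonant projector $\Pi_{\mathrm{res}}$, picking out the monomials with $p=q$ (precisely the monomials in $I_j=\xi_j\eta_j$), is then a fixed linear map on the finite-dimensional space of polynomial coefficients. For any homogeneous polynomial $G$ of degree $k\leq K$, the homological equation
\[ \{H_2,\chi\}=-(I-\Pi_{\mathrm{res}})G \]
has a unique solution in $\ker(\mathrm{ad}_{H_2})^{\perp}$, namely $\chi=\sum_{p\neq q}\frac{G_{p,q}}{-i(q-p)\cdot\a}\,\xi^p\eta^q$. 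Since $\a$ is non-resonant the denominators $(q-p)\cdot\a$ do not vanish and are fixed once $\a$ is, so $\chi$ depends \emph{linearly} on the coefficients of $G$ with fixed scalar coefficients (depending only on $\a$).

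Next, I would run the standard degree-by-degree Birkhoff scheme for $k=3,4,\dots,K$. At step $k$ the Hamiltonian has the form $H^{(k-1)}=H_2+h^{(k-1)}(I)+G_k+O_{k+1}$, the generating function $\chi_k$ is determined by solving the above homological equation with $G=G_k$, and the new Hamiltonian is the pullback $H^{(k-1)}\circ\Phi^1_{\chi_k}$, where $\Phi^1_{\chi_k}$ is the time-one map of the Hamiltonian flow of $\chi_k$. Truncated at degree $K$, this pullback equals a finite Lie series $\sum_{j=0}^{N_k}\tfrac{1}{j!}(\mathrm{ad}_{\chi_k})^j H^{(k-1)}$: indeed $\chi_k$ is homogeneous of degree $k\geq 3$, so $\mathrm{ad}_{\chi_k}=\{\chi_k,\cdot\}$ strictly raises total degree by $k-2\geq 1$, and only finitely many iterated brackets survive below degree $K$. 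Each such iterated bracket is a multilinear operation on coefficients, so the coefficients of $H^{(k)}$ are polynomial in those of $H^{(k-1)}$ and $\chi_k$.

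A straightforward induction on $k$ then concludes: at every step the coefficients of $\chi_k$ are linear in the current coefficients of $G_k$ (with fixed scalars depending only on $\a$), and the coefficients of $G_k$ are themselves polynomial in the initial coefficients of $\tilde H_K$; hence all coefficients of $H^{(k)}$ depend polynomially on $\tilde H_K$. After $K-2$ steps, the accumulated normal-form part (the $I$-dependent polynomial) is precisely $h^m=\mathrm{BNF}_K(\tilde H_K)\in P_2(n,m)$, which is therefore a polynomial, hence algebraic, function of $\tilde H_K$. No genuine obstacle arises; the only point demanding mild care is to verify that the truncation at degree $K$ commutes with the iteration, which follows from the degree-raising property of $\mathrm{ad}_{\chi_k}$ noted above.
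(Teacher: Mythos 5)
Your argument is correct and it is precisely the standard degree-by-degree Birkhoff scheme that the paper implicitly invokes (the paper's proof is a one-line deferral to \cite{PM03}). You in fact establish the slightly stronger statement that, with $\alpha$ fixed, $\mathrm{BNF}_K$ is a \emph{polynomial} map --- since the small divisors $(q-p)\cdot\alpha$ are fixed nonzero constants and the truncated Lie series is a finite composition of multilinear bracket operations --- which of course implies algebraicity.
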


\begin{proof} This follows by construction of the Birkhoff normal form, and we refer to \cite{PM03} for more details. \end{proof} 

Now given a polynomial $Q=Q_2+\cdots+Q_m \in P_2(n,m)$, where each $Q_j$ is homogeneous of degree $j$, it can be identified to a polynomial $\tilde{Q} \in P_3(2n,K)$ by setting $\tilde{Q}(\xi):=Q(I(\xi))$. For $K \geq 4$, we can define a map by
\[ \begin{array}{lll}
F^K : & P_2(n,m) &\longrightarrow P_2(n,m) \\
     & Q &\longmapsto \mathrm{BNF}_K(\tilde{H}_K+\tilde{Q}).
\end{array} \]

\begin{lemma}\label{lemme2}
The map $F^K$ preserves Lebesgue measure.
\end{lemma}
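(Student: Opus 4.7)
The plan is to show that $F^K$ is triangular with the identity on the diagonal with respect to the natural grading of $P_2(n,m)$ by homogeneous degree in $I$, and to deduce Lebesgue-preservation from the unipotent Jacobian that results. Decompose $P_2(n,m) = \bigoplus_{j=2}^{m} P^{(j)}(n)$, where $P^{(j)}(n)$ denotes the space of homogeneous polynomials of degree $j$ in $n$ variables, and write $Q = Q_2 + \cdots + Q_m$ and $F^K(Q) = (F^K(Q))_2 + \cdots + (F^K(Q))_m$ according to this grading. The central claim to be established is
\[ (F^K(Q))_j \;=\; Q_j + G_j(Q_2, \ldots, Q_{j-1}), \qquad j = 2, \ldots, m, \]
where each $G_j$ is a polynomial function that depends on $\tilde H_{K}$ but is independent of $Q_{j'}$ for $j' \geq j$. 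Granting this, $dF^K_Q$ is block lower-triangular with identity blocks on the diagonal, so $\det dF^K_Q \equiv 1$ on $P_2(n,m)$ and $F^K$ preserves Lebesgue measure by the change-of-variables formula.

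To prove the triangularity I would unwind the iterative Birkhoff construction $\Phi^K = \Phi_3 \circ \cdots \circ \Phi_K$ with $\Phi_l = \exp(X_{\chi_l})$, where $\chi_l$ is homogeneous of degree $l$ in $\xi = (x,y)$ and solves a homological equation killing the non-normal part of the current degree-$l$ term. Two structural facts drive the argument. First, $\tilde Q(\xi) = \sum_{j=2}^{m} Q_j(I(\xi))$ lies in the normal-form kernel of $\{h_1,\cdot\}$, because each $Q_j \circ I$ Poisson-commutes with $h_1 = \alpha\cdot I$; hence $Q_j$ enters the normal-form output only through its tautological appearance at its intrinsic degree $2j$ in $\xi$. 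Second, the operator $\{\chi_l, \cdot\}$ raises the degree in $\xi$ by exactly $l - 2 \geq 1$, so any non-trivial iterated Lie bracket of $Q_j \circ I$ with generators $\chi_{l_1}, \ldots, \chi_{l_r}$ produces a term of degree strictly greater than $2j$ in $\xi$.

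Combining these, one proves by induction on $l$ that when the BNF procedure is applied to $\tilde H_K + \tilde Q$, the generator $\chi_l$ depends only on $\tilde H_K$ and on those $Q_{j'}$ with $2j' < l$. Specialized to step $l = 2j$, the degree-$2j$ part of the Hamiltonian obtained just before that step decomposes as the untouched summand $Q_j(I)$ (the only way $Q_j$ can contribute at degree $2j$ is without any bracket being applied) plus correction terms built exclusively from $\tilde H_K$ and $Q_2, \ldots, Q_{j-1}$. Since the subsequent generators $\chi_l$ with $l > 2j$ only shift degrees upward and cannot feed back into degree $2j$, projecting onto the normal-form kernel yields the claimed identity. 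The main technical obstacle is the careful Lie-series bookkeeping needed to confirm that no hidden contribution from $Q_j$ or from higher-index $Q_{j'}$ leaks into any $\chi_l$ for $l \leq 2j$ or into the non-normal corrections at degree $2j$; once this is verified, the unipotent structure of $dF^K$ yields Lebesgue-preservation essentially for free.
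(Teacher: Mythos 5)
Your proof is correct and follows essentially the same route as the paper's: both establish that, in the grading by homogeneous degree, $F^K$ is lower-triangular with identity on the diagonal (the paper writes $F^K_j(Q)=Q_j+\mathcal{F}^K_j(\tilde H_{2j},Q_2,\dots,Q_{j-1})$, citing \cite{Nie13}), so the Jacobian is one and Lebesgue measure is preserved. The only difference is that the paper states the triangular structure as a known consequence of the Birkhoff construction while you spell out the Lie-series degree-counting argument that justifies it.
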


\begin{proof} This also follows by construction of the Birkhoff normal form. More precisely, it can be shown that decomposing the map $F^K$ as $F^K=(F_2^K,\dots,F_m^K)$, where $F_j^K$ is the component with respect to homogeneous polynomials of degree $j$, then we have $F_2^K(Q)=Q_2+\mathrm{BNF}^4(\tilde{H}_4)=Q_2+h^2$, and for $3 \leq j \leq K$, we have $F_j^K(Q)=Q_j+\mathcal{F}_j^K(\tilde{H}_{2j},Q_2,\dots,Q_{j-1})$ where $\mathcal{F}_j^K$ is an algebraic map (see \cite{Nie13}, where this property has already been used). This expression clearly implies that $F^K$ is smooth with Jacobian one, therefore it preserves Lebesgue measure. \end{proof} 

\begin{proof}[Proof of Theorem~\ref{mainthm02}]
Our aim is to show that the complement of $\mathrm{BNF}_{K_0}^{-1}(SS(n,m_0))$ in $P_3(2n,K_0)$ is contained in a semi-algebraic subset of positive codimension. Since the inverse image of a semi-algebraic subset by an algebraic map is semi-algebraic, from Theorem~\ref{proputile2} and Lemma~\ref{lemme1}, it follows that the complement of $\mathrm{BNF}_{K_0}^{-1}(SS(n,m_0))$ in $P_3(2n,K_0)$ is contained in a semi-algebraic subset. It remains to prove that this set has positive codimension, or equivalently, zero Lebesgue measure in $P_3(2n,K_0)$. By Lemma~\ref{lemme1}, for any $\tilde{H}_{K_0} \in P_3(2n,K_0)$, the Lebesgue measure in $P_2(n,m_0)$ of the set
\[ \{ Q \in P_2(n,m_0) \; | \; \tilde{H}_{K_0}+\tilde{Q} \notin  \mathrm{BNF}_{K_0}^{-1}(SS(n,m_0)) \} \] 
is zero. By Fubini-Tonelli theorem, this implies that the complement of $\mathrm{BNF}_{K_0}^{-1}(SS(n,m_0))$ in $P_3(2n,K_0)$ has zero Lebesgue measure, and this concludes the proof.  
\end{proof}

\subsection{Birkhoff normal forms with estimates} \label{sec.bir}

For a real analytic Hamiltonian with an elliptic equilibrium point, as in \eqref{Hamintro}, it is known that the estimates on the Birkhoff normal form are given by the arithmetic properties of $\a$ and the analytic norm of $H$.  We summarize in the following Proposition \ref{propBirkhoffutile} the estimates on the BNF that will be useful for us in the sequel. The proof of Proposition \ref{propBirkhoffutile} is relatively standard, we include it in Appendix \ref{s3} following \cite{DG96b}. 

Here it will be more convenient to perform a linear change of complex canonical coordinates $z=S(\xi)$, where $S : \C^{2n} \rightarrow \C^{2n}$ is defined by
\[ z_j=\frac{1}{\sqrt{2}}(\xi_j+i\xi_{n+j}), \quad z_{n+j}=\frac{i}{\sqrt{2}}(\xi_j-i\xi_{n+j}). \]
It is easy to check that this linear transformation $S$ and its inverse $S^{-1}$ have unit norm (with respect to the norm $||\,.\,||$ defined in~\eqref{norme}), hence $H$ and $H \circ S$ have the same radius of convergence around the origin and $||H \circ S||_R=||H||_R$. Abusing notations, we will still write $H$ instead of $H \circ S$ to denote the Hamiltonian in these new coordinates. Observe that
\[ H_2(\xi)=h_1(I(\xi))=\alpha\cdot I(\xi)= i\sum_{j=1}^n \alpha_j \xi_j \xi_{n+j} \]
where
\[ I(\xi)=(I_1(\xi),\dots,I_n(\xi)), \quad I_j(\xi)=i\xi_j\xi_{n+j}, \quad 1 \leq j \leq n. \]

Recall the definition of $\Psi_\a$ given in  Section \ref{s21} and define also for any integer $j \geq 3$, 
\[ \psi_\alpha^j:=\prod_{i=3}^{j}\Psi_\alpha(i). \]

For $K \geq 1$, define 
\begin{equation}\label{defrK}
\rho_K:=(548ncd K\Psi(K))^{-1},
\end{equation}
where the positive constants $c$ and $d$ depend only on $n$, $R$ and $||H||_R$ and are defined in~\eqref{cd}.

\begin{proposition}\label{propBirkhoffutile}
Let $H$ be as in \eqref{Hamintro} with 
$\a$ as in \eqref{fonctionpsi}, and fix integers $p \geq 2$, $K \geq 2p$ and $0 \leq q \leq K-4$. There exist constants $b(p)$ and  $\tilde{b}(q)$ that depend respectively on $p,n,R,\|H\|_R, \psi_\a^{2p-1}$ and on $q,n,R,\|H\|_R, \psi_\a^{q+2}$ such that if we assume
\begin{equation}\label{threshold}
0 < \rho \leq \rho_K/e,
\end{equation}
then  there exists a real-analytic symplectic transformation $\Phi^K={\rm Id} +O(\xi^{2})$ defined on $\mathcal{B}_{\rho_K}$ such that  
\[ H \circ \Phi^K(\xi)=\alpha \cdot I(\xi)+h^m(I(\xi))+f^K(\xi),\]
with $f^K=O(\xi^{K+1})$ and the following estimates hold
\begin{equation}\label{estimhK}
||\nabla^2 h^m||_{p,\rho}=\max_{2 \leq j \leq p}||\nabla^j h^m||_{\rho} \leq b(p)
\end{equation}
\begin{equation}\label{estimfK}
||\nabla f^K||_\rho \leq \tilde{b}(q)\rho^q e^{-K}.
\end{equation}
\end{proposition}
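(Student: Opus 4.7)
The strategy is to adapt the iterative Birkhoff normal form construction with quantitative estimates, following \cite{DG96b}. After the complex canonical change $S$, the quadratic part reads $H_2(\xi) = i \sum_j \alpha_j \xi_j \xi_{n+j}$, so that the adjoint action $\{H_2, \cdot\}$ multiplies any monomial $\xi_1^{a_1}\cdots\xi_n^{a_n}\xi_{n+1}^{b_1}\cdots\xi_{2n}^{b_n}$ by $i(a-b)\cdot\alpha$, vanishing exactly on the resonant monomials $a=b$, which are polynomials in $I(\xi)$. I would iterate, for $s = 3, \dots, K$, the following step: write the current Hamiltonian as $\alpha \cdot I + (\text{current normal form}) + g_s + O(\xi^{s+1})$, split $g_s = \bar{g}_s + (g_s - \bar{g}_s)$ into its resonant and non-resonant parts, and solve the cohomological equation $\{H_2, \chi_s\} + (g_s - \bar{g}_s) = 0$ term-by-term, each coefficient of $\chi_s$ being obtained by dividing the corresponding coefficient of $g_s - \bar{g}_s$ by a small divisor $(a-b)\cdot\alpha$ bounded in absolute value by $\Psi_\alpha(s)$. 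The time-one Hamiltonian flow $\Phi_s := \exp(X_{\chi_s})$ then absorbs $\bar{g}_s$ into the normal form and pushes the non-resonant remainder to order $\geq s+1$. The desired transformation is the composition $\Phi^K := \Phi_3 \circ \cdots \circ \Phi_K$.

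The technical heart is to control the successive losses of analyticity. I would work on a geometric sequence of nested complex balls $\mathcal{B}_{\rho_K} \subset \cdots \subset \mathcal{B}_{R/2}$, losing at each step $s$ only $O(\rho_K/K)$ of analyticity radius so that the total shrinkage is comparable to $\rho_K$. Cauchy inequalities on each step bound the norm of $\chi_s$ on the shrunk domain by a factor proportional to $\Psi_\alpha(s)\,K/\rho_K$ times the current remainder, and the standard lemma on time-one Hamiltonian flows shows that $\Phi_s$ stays close to the identity as long as $K\Psi_\alpha(K)\rho_K$ is small enough. The choice $\rho_K = (548\,n\,c\,d\,K\,\Psi_\alpha(K))^{-1}$ from \eqref{defrK} is precisely calibrated so that the whole scheme converges on $\mathcal{B}_{\rho_K}$, yielding $\Phi^K(\mathcal{B}_{\rho_K}) \subset \mathcal{B}_R$ and $\Phi^K - \mathrm{Id} = O(\xi^2)$.

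From this construction I would extract the two estimates. For the normal form, the homogeneous part of degree $j$ in $I$ of $h^m$ is determined entirely by the resonant projections $\bar{g}_s$ with $s \leq 2j$, which involve only the small divisors $\Psi_\alpha(i)$ for $i \leq 2j - 1$; hence $\|\nabla^j h^m\|_\rho$ is bounded by a constant times $\psi_\alpha^{2j-1}$, and taking the maximum over $2 \leq j \leq p$ gives the desired $b(p)$. For the remainder, the control of $\Phi^K$ yields a uniform bound on $\|f^K\|_{\rho_K}$ in terms of $\|H\|_R$ and $\|h^m\|_{\rho_K}$. Since $f^K$ vanishes to order $K+1$ at the origin, Cauchy coefficient estimates give $\|f^K\|_\rho \leq C\,\|f^K\|_{\rho_K}(\rho/\rho_K)^{K+1}$ for $\rho \leq \rho_K/2$, and one further Cauchy inequality bounds $\|\nabla f^K\|_\rho$. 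Splitting $\rho^K = \rho^q \cdot \rho^{K-q}$ and using $\rho \leq \rho_K/e$ to bound $(\rho/\rho_K)^{K-q} \leq e^{-(K-q)}$ produces the factor $e^{-K}$; the residual factors ($e^q$ and appropriate powers of $\rho_K$) can be absorbed into $\tilde{b}(q)$. The main technical hurdle is exactly this last piece of bookkeeping: to ensure that $\tilde{b}(q)$ depends only on $\psi_\alpha^{q+2}$ rather than on the full $\psi_\alpha^K$, one must run the first $q+2$ normalization steps with explicit constants depending only on $\psi_\alpha^{q+2}$, while the subsequent $K - q - 2$ steps contribute solely through the universal decay $(\rho/\rho_K)^K$, so that their $K$-dependent small-divisor factors are confined to the $e^{-K}$.
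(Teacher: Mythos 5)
Your outline captures the right strategy, and it would lead to a correct proof with enough care, but the paper itself takes a substantial shortcut that is worth flagging. Rather than re-deriving the Birkhoff normal form scheme with nested analyticity domains, the paper cites Proposition~1 of Delshams--Guti\'errez (\cite{DG96b}) as a black box (Proposition~\ref{propBirkohffDG}), which already delivers the transformation $\Phi^K$ on $\mathcal{B}_{\rho_K}$ together with explicit bounds on the $\ell^1$-norms $\|h_k\|$ and $\|f_k\|$ of the homogeneous components, expressed in terms of $\psi_\alpha^{k-1}$, $\psi_\alpha^{K-1}$, $\Psi_\alpha(K)$, and factorials in $K$. The paper's only remaining work (Proposition~\ref{propBirkohff}) is to reorganize those bounds algebraically into the form $\|h_k\|\leq\beta(p)\rho_K^{-(k-2p)}$ and $\|f_k\|\leq\tilde\beta(q)\rho_K^{-(k-q-1)}$, and then sum geometric series using the threshold $\rho\leq\rho_K/e$. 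Thus the paper works coefficient-by-coefficient with the polynomial norm $\|P\|=\sum_\nu|P_\nu|$ and estimates~\eqref{estnormpol}--\eqref{estnormpol2}, whereas you propose sup-norms on a geometric chain of shrinking balls; both are viable, but the coefficient-norm route avoids the loss-of-analyticity bookkeeping entirely once the DG estimate is admitted.

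On the crucial point of why $\tilde b(q)$ depends only on $\psi_\alpha^{q+2}$: your description in terms of ``running the first $q+2$ steps with explicit constants'' and letting the remaining steps feed only into $e^{-K}$ is the right intuition, but the actual mechanism is a factorization, not a separation of steps. The DG bound on $\|f_k\|$ carries the full $\psi_\alpha^{K-1}\Psi_\alpha(K)^{k-K+2}$, and one splits $\psi_\alpha^{K-1}=\psi_\alpha^{q+2}\prod_{i=q+3}^{K-1}\Psi_\alpha(i)$, uses monotonicity of $\Psi_\alpha$ to bound each excess factor by $\Psi_\alpha(K)$, so that the total excess $\Psi_\alpha(K)^{k-q-1}$ is exactly cancelled against the $\Psi_\alpha(K)^{k-q-1}$ sitting inside $\rho_K^{-(k-q-1)}$ (and likewise $(K-3)!/(q+1)!\leq K^{K-q-4}$ is absorbed by the $K^{k-q-1}$ in $\rho_K^{-(k-q-1)}$). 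The $e^{-K}$ then comes separately from $(\rho/\rho_K)^{k-q-1}\leq e^{-(k-q-1)}$ for $k\geq K+1$ under the threshold~\eqref{threshold}. One small slip in your sketch: it is $|(a-b)\cdot\alpha|^{-1}$ that is bounded by $\Psi_\alpha(s)$, not $|(a-b)\cdot\alpha|$ itself, given the definition~\eqref{fonctionpsi}.
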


\subsection{From Nekhoroshev stability to double exponential stability} \label{nekhoimpliesstability}

In this section we prove that Theorem \ref{mainthm03} implies Theorem \ref{mainthm01}. As a corollary of Proposition \ref{propBirkhoffutile} and Theorem \ref{th.taylor} we get the following

\begin{proposition}\label{proputile1}
Let $H$ be as in~\eqref{Hamintro}  with $\a$ as in \eqref{fonctionpsi}, and such that 
\[ \mathrm{BNF}_{K_0}(\tilde{H}_{K_0})=h^{m_0} \in SS(n,m_0).\]  
There exists $C>0$ and $K^* \geq 4$ that depend only on $n$, $R$, $\|H\|_R$, $h_{m_0}$, $||\alpha||$ and $\psi_\alpha^{2m_0+1}$ such that if 
\[ K \geq K^*, \quad 0<\rho\leq \rho_K/e,\] 
then there exists a real-analytic symplectic transformation $\Phi^K={\rm Id} +O(\xi^{2})$ defined on $\mathcal{B}_{\rho_K}$ such that
\begin{equation}
H \circ \Phi^K(\xi)=\alpha\cdot I(\xi)+h^m(I(\xi))+f^K(\xi):=h(I(\xi))+f^K(\xi) \label{eBNF} 
\end{equation}
with $f^K=O(\xi^{K+1})$ and
\begin{equation}\label{estimhK2}
||\nabla^2 h^m||_{m_0+1,\rho}=\max_{2 \leq j \leq m_0+1}||\nabla^j h^m||_{\rho} \leq b(m_0+1)
\end{equation}
\begin{equation}\label{estimfK2}
||\nabla f^K||_\rho \leq \tilde{b}(q)\rho^q e^{-K},\quad 0 \leq q \leq K-4,
\end{equation}
and such that $h$ is $(\rho/2,||\alpha||/2,C,\rho^2/4,m_0-1)$-steep. 
\end{proposition}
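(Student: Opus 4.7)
The approach is to combine Proposition~\ref{propBirkhoffutile} with Theorem~\ref{th.taylor}, using the hypothesis $\mathrm{BNF}_{K_0}(H)=h^{m_0} \in SS(n,m_0)$ as the stable-steepness input that Theorem~\ref{th.taylor} requires on the truncated Taylor expansion of the integrable part. The existence of $\Phi^K$ and the estimates \eqref{estimhK2}--\eqref{estimfK2} are supplied directly by Proposition~\ref{propBirkhoffutile}; the only genuinely new content to establish is the steepness assertion on $h$.

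First I would apply Proposition~\ref{propBirkhoffutile} with $p = m_0+1$; this requires $K \geq 2m_0+2$, which is a first (purely dimensional) lower bound on $K^*$. For any $\rho \leq \rho_K/e$ the proposition then furnishes the transformation $\Phi^K$ on $\mathcal{B}_{\rho_K}$ and the splitting $H\circ\Phi^K = \alpha\cdot I + h^m(I) + f^K$ together with \eqref{estimhK2} and \eqref{estimfK2}. Setting $h(I) := \alpha\cdot I + h^m(I)$, the identity $\nabla^j h = \nabla^j h^m$ for $j \geq 2$ gives $\|\nabla^j h\|_\rho \leq b(m_0+1)$ for $2 \leq j \leq m_0+1$, while $\nabla h(0)=\alpha$ gives $\varpi := \|\nabla h(0)\| = \|\alpha\|$.

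To apply Theorem~\ref{th.taylor} to $h$ on $D_\rho$ I need to identify the polynomial $P_{p-1} = T_{p-1} h(0) - T_1 h(0) - T_0 h(0)$ with $p = m_0+1$. Since $T_0 h(0) = 0$ and $T_1 h(0) = \alpha\cdot I$, this is simply the truncation of $h^m$ at degree $m_0$. The degree-by-degree construction of the Birkhoff normal form, combined with its uniqueness, then forces this truncation to coincide with $\mathrm{BNF}_{K_0}(\tilde H_{K_0}) = h^{m_0}$, which lies in $SS(n,m_0)$ by assumption. Theorem~\ref{th.taylor} therefore yields positive numbers $\mu^*,\delta^*,C$ depending only on $\|\alpha\|$, $h^{m_0}$, $b(m_0+1)$ and $n$, such that $h$ is $(\mu,\|\alpha\|/2,C,\delta,m_0-1)$-steep with $\mu = \min\{\rho/2,\mu^*\}$ and $\delta = \min\{\rho^2/4,\delta^*\}$.

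It remains to ensure that the two minima are attained by $\rho/2$ and $\rho^2/4$, i.e.\ $\rho \leq 2\mu^*$ and $\rho \leq 2\sqrt{\delta^*}$. Since $K \mapsto \rho_K$ is decreasing, one simply chooses $K^* \geq 2m_0+2$ large enough that $\rho_{K^*}/e \leq \min\{2\mu^*,\,2\sqrt{\delta^*}\}$; then for $K \geq K^*$ and $\rho \leq \rho_K/e$ both conditions hold automatically, and the claimed steepness $(\rho/2, \|\alpha\|/2, C, \rho^2/4, m_0-1)$ follows. The resulting $K^*$ depends only on $n,R,\|H\|_R,\|\alpha\|,h^{m_0}$ and $\psi_\alpha^{2m_0+1}$, through $b(m_0+1)$ and the definition of $\rho_K$. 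I do not foresee a substantive obstacle: the only point requiring a line of justification is the truncation identity $T_{m_0}(h^m) = h^{m_0}$, which is a direct consequence of the fact that the BNF algorithm normalizes degree $j$ without altering lower-degree terms, together with uniqueness.
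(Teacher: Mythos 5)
Your proof is correct and takes essentially the same route as the paper: apply Proposition~\ref{propBirkhoffutile} with $p=m_0+1$, observe that the relevant Taylor truncation of $h$ at $0$ is $h^{m_0}\in SS(n,m_0)$ (by uniqueness of the Birkhoff normal form) and that the bound $\|\nabla^2 h\|_{m_0+1,\rho}\leq b(m_0+1)$ is $\rho$-independent, then invoke Theorem~\ref{th.taylor} and take $K^*$ large enough so that $\rho_K/e$ is small enough for both minima in that theorem to be attained by $\rho/2$ and $\rho^2/4$. Your extra paragraph justifying $T_{m_0}(h^m)=h^{m_0}$ via the degree-by-degree structure of the BNF is a reasonable fleshing-out of a step the paper treats as immediate.
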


\begin{proof}[Proof of Proposition~\ref{proputile1}] For $K\geq K^*\geq 2(m_0+1)$ apply Proposition \ref{propBirkhoffutile} with $p=m_0+1$ and $q \leq K-4$ and get \eqref{eBNF} with estimates~\eqref{estimhK2} and~\eqref{estimfK2}. We want to apply Theorem~\ref{th.taylor}  with $p=m_0+1$ and $\varpi=||\alpha||$. Observe first that $T_{m_0} h(0) -T_1 h(0)-T_0 h(0)=h^{m_0} \in SS(n,m_0)$. Then observe also that $\nabla^2 h=\nabla^2 h^m$ and that by~\eqref{estimhK2}, we have the bound    
\[ ||\nabla^2 h||_{m_0+1,\rho}=||\nabla^2 h^m||_{m_0+1,\rho} \leq b(m_0+1) \]
which is independent of $\rho$, hence the constants $C$, $\mu^*$ and $\delta^*$  in the statement of Theorem~\ref{th.taylor} do not depend on $\rho$,
and choosing $K^*$ sufficiently large, $\rho_K$ and then $\rho$ become sufficiently small so that $\rho/2 \leq \mu^*$ and $\rho^2/4 \leq \delta^*$ 
therefore $h$ is $(\rho/2,||\alpha||/2,C,\rho^2/4,m_0-1)$-steep.   
\end{proof}

We now use Proposition~\ref{proputile1} and Theorem~\ref{mainthm03}  to give the 

\begin{proof}[Proof of Theorem~\ref{mainthm01}] Let $H$ be as in~\eqref{Hamintro}  with $\a$ as in \eqref{fonctionpsi} and
\[ \mathrm{BNF}_{K_0}(\tilde{H}_{K_0})=h^{m_0} \in SS(n,m_0).\]  

For $r>0$ we define 
\begin{equation*}
K=\Delta_\alpha((1644encdr)^{-1})
\end{equation*} 
so that $\rho_K/e \geq 3r$, and observe that $K\geq K^*$ is satisfied (with $K^*$ given by Proposition~\ref{proputile1}) provided $r\leq r^*$ for some sufficiently small $r^* >0$. Hence we can apply the latter proposition with our choice of $K$ and with $\rho=3r$. 

Next we want to apply Theorem~\ref{mainthm03} to \eqref{eBNF}. First observe that Theorem~\ref{mainthm03} is stated and proved in the $z$ variables whereas the estimate of Proposition~\ref{proputile1} are given in the $\xi$ variables: however since $z=S(\xi)$ with $S$ and $S^{-1}$ of unit norm, Theorem~\ref{mainthm03} also holds true, with the same estimates, if one uses the $\xi$ variables.

From Proposition~\ref{proputile1} and our choice of $\rho$, the function $h$ is $(3r/2,||\alpha||/2,C,9r^2/4,m_0-1)$-steep and~\eqref{condsteep} is satisfied with
\[ E:=3/2||\alpha||, \quad F:=b(2), \quad \varepsilon:=\tilde{b}(q)\rho^qe^{-K} \]
for some $0 \leq q \leq K-4$ yet to be chosen. Up to taking $r^*$ smaller one easily checks that~\eqref{seuilsteep} (with $r$ replaced by $\rho=3r$)  is satisfied provided we choose $q=4na-1$. Thus Theorem~\ref{mainthm03} can be applied and we obtain the following statement: given an arbitrary solution $\tilde{\xi}(t)$ of the system associated to $H \circ \Phi^K$ in~\eqref{eBNF}, if $||\tilde{\xi}(0)||=||\tilde{z}(0)||\leq \rho/2=3r/2$, then 
\[ |I(\tilde{z}(t))-I(\tilde{z}(0))|\leq \tilde{c}'(\rho\varepsilon)^{\frac{1}{2na}}, \quad |t| \leq \exp\left((\rho\varepsilon)^{-\frac{1}{2na}}\right).  \]
For $r$ sufficiently small, this implies in particular that $||\tilde{\xi}(t)||=||\tilde{z}(t)||< 7r/4$ for times
\[ |t|\leq \exp\left((3r\varepsilon)^{-\frac{1}{2na}}\right).  \] 
Recalling the definition of $\varepsilon$ and with our choices of $q$ and $K$, the previous estimate implies that $||\tilde{\xi}(t)||=||\tilde{z}(t)||< 7r/4$ for times
\[ |t| \leq \exp( c r^{-2} \exp(c' \Delta_\a(c'' r^{-1})))\]with
\begin{equation}  c:=3^{-2}\tilde{b}(4na-1)^{-\frac{1}{2na}}, \quad c':=(2na)^{-1}, \quad c'':=1644encd.  \label{ccc} \end{equation}

To conclude, observe that $H$ is related to~\eqref{eBNF} by a symplectic transformation $\Phi^K={\rm Id}+O(\xi^2)$, which can be made close enough to the identity (as well as its inverse) by taking $r$ small enough. Hence, given any solution $\xi(t)$ of the system associated to $H$ with $||\xi(t)||=||z(t)||\leq r$, the corresponding solution $\tilde{\xi}(t)$ of~\eqref{eBNF} satisfy $||\tilde{\xi}(0)||=||\tilde{z}(0)||\leq 3r/2$ for $r$ small enough, and therefore $||\tilde{\xi}(t)||=||\tilde{z}(t)||< 7r/4$, and also $||\xi(t)||=||z(t)||< 2r$, for times
$$|t| \leq \exp( c r^{-2} \exp(c' \Delta_\a(c'' r^{-1}))).$$   
We eventually arrives at the estimate
$$T(r) \geq \exp( c r^{-2} \exp(c' \Delta_\a(c'' r^{-1})))$$
and this concludes the proof of the lower bound on $T(r)$ in the general case.  The estimate in the Diophantine case follows from the general case and from \eqref{deltadioph}. 
\end{proof}

\section{Nekhoroshev exponential stability for an elliptic equilibrium} \label{sec.stability}

The goal of this section is to prove Theorem~\ref{mainthm03}, following the method introduced in \cite{BN12} and \cite{Bou11cmp}. This method, which uses only periodic approximations and compositions of periodic averagings, has the advantage of being directly applicable in a neighborhood of an elliptic equilibrium point where action-angle coordinates cannot be used.
 
Since the proof contains some technical statements, we first give in the next Section \ref{s50} a long and complete heuristic description of the method that would hopefully make the reading of the proof easier. We emphasize that Section \ref{s50} is included only for the convenience of the reader and does not interfere with the proof strictly speaking.

\subsection{Heuristic description and plan of the proof}\label{s50}

Given an arbitrary initial condition $z_0$ and the associated solution $z(t)$ (that is $z(0)=z_0$) of the Hamiltonian $H=h+f$, our goal is to prove that the variation of the action $I(z(t))-I(z_0)$ remains small (as a small power of $\epsilon$) for an interval of time which is exponentially large with respect to the inverse of (some power of) $\varepsilon$, $\varepsilon$ being the size of the perturbation $f$. The proof is based on an algorithm that, for $0 \leq j \leq n-1$, reduces to a space of dimension $n-j-1$ the directions in which a fast drift (before an exponentially long interval of time) may be possible at each step $j$, and that stops therefore after at most $j=n-1$ steps. We now describe the heuristics of this algorithm, which depends on a positive parameter $Q\geq 1$ and an integer parameter $m \geq 1$.

For the step $j=0$, we write $H=H_0$ and given the parameter $Q\geq 1$, we use Dirichlet's box principle to approximate the unperturbed frequency $v_0=\nabla h(I(z_0))$ by a periodic vector $\omega_0$, that is, a vector which is a real multiple of an integer vector (this corresponds to a frequency vector which is maximally resonant, as the set of integer vectors $k$ orthogonal to $\omega_0$ forms a sub-module of maximal rank $n-1$). Letting $T_0$ be the period of $\omega_0$, which is the smallest positive number $t$ such that $t\omega_0 \in \Z^n$, the parameter $Q$ controls the approximation as follows:
\[ ||v_0-\omega_0||=s_0 \lesssim (T_0Q)^{-1}, \quad ||v_0||^{-1} \lesssim T_0 \lesssim ||v_0||^{-1} Q^{n-1}. \]
Then, on some small neighborhood $V_0$ of $z_0$, given the integer parameter $m \geq 1$ and assuming certain compatibility conditions between $m, s_0, T_0$ and $\varepsilon$, it is possible to construct a resonant normal form (with respect to $\omega_0$) up to a remainder which is exponentially small in $m$: more precisely, by a symplectic transformation $\Phi_0$ which is close to the identity, the transformed Hamiltonian $H_0 \circ \Phi_0$ can be written as a perturbation of $h$, but this time the perturbation splits into two parts: a resonant part, which is still of order $\varepsilon$ but has the additional property that its Hamiltonian flow commutes with the linear flow of frequency $\omega_0$, and a non-resonant part which is of order $2^{-m}\varepsilon$. As a consequence, we have the following partial stability result for the solution of $H_0 \circ \Phi_0$ starting at $\Phi_0^{-1}(z_0)$: the variation of the action variables in the (one-dimensional) direction given by $\omega_0$ is small for an exponentially long interval of time of order $2^m$, unless the solution escapes from the domain of the normal form $V_0$ before. In other words, we excluded at this step $j=0$ the direction $\omega_0$ from the directions along which a drift in the actions may appear before an exponentially long interval of time. Since $\Phi_0$ is symplectic and close to the identity, the same holds true for the solution $z(t)=z_0(t)$ of $H=H_0$, and we arrive at the following dichotomy: either the action variables have also small variation in the direction transverse to $\omega_0$ for an exponentially long interval of time, or not. 

In the first case, for an exponentially long interval of time with respect to $m$, the variation of the action variables is small: the stability condition is satisfied and the algorithm stops. Once the algorithm stops, one can determine the parameters $Q$ and $m$ in order to fulfill the compatibility conditions which essentially read as follows: 
\begin{equation*}
s_0 \lesssim 1, \quad mT_0\varepsilon \lesssim s_0, \quad mT_0s_0 \lesssim 1.
\end{equation*}
Since $s_0 \sim (T_0Q)^{-1}$ and $T_0 \lesssim Q^{n-1}$ (as $||\nabla h(I(z_0))||=||v_0||$ is of order one), these conditions are satisfied if we choose $m \sim Q \sim \varepsilon^{-\frac{1}{2n}}$ and $\varepsilon \lesssim 1$, and thus we obtain a result of exponential stability with respect to $m \sim \varepsilon^{-\frac{1}{2n}}$. If $h$ is convex or quasi-convex it is simple to see, due to energy conservation, that this first case is automatic and it is not possible that the action variables drifts transversely to $\omega_0$, hence exponential stability is proved with one step of the algorithm. But in general, the second case is possible and more work is needed to further reduce the drifting possibilities of the actions going from the step $j=0$ to the step $j=1$ of the algorithm.  

In the second case, setting $H_0^+=H_0 \circ \Phi_0$ and denoting by $z_0^+(t)$ the associated solution, we can find a positive time $\tilde{t}_0^+$, which is shorter than $2^{m}$, such that the $I(z_0^+(\tilde{t}_0^+))-I(z_0^+(0))$ has a small drift of order $s_0$ in the direction orthogonal to $\omega_0$. Letting $\Pi_0$ be the projection onto the orthogonal of $\omega_0$, we can define a curve 
\[ \gamma_0(t)=I(z_0^+(0))+\Pi_0(I(z_0^+(t))-I(z_0^+(0)))\] 
which takes values in an affine subspace of dimension $n-1$. One can then exploit the steepness property to find a time $\tilde{t}_0\leq t_0^+$ for which the vector $\nabla h(\gamma_0(\tilde{t}_0))$ is linearly independent from $\omega_0$ in a quantitative way: 
\[ \Pi_0(\nabla h(\gamma_0(\tilde{t}_0))) \gtrsim s_0^{p_{n-1}}\] 
where $p_{n-1}$ is the steepness index in dimension $n-1$. Using Dirichlet's box principle again with the same parameter $Q$, we can approximate the vector $\Pi_0(\nabla h(\gamma_0(\tilde{t}_0)))=v_1$ by another periodic vector $\omega_1$:
\[ ||v_1-\omega_1||=s_1 \lesssim (T_1Q)^{-1}, \quad ||v_1||^{-1} \lesssim T_1 \lesssim ||v_1||^{-1} Q^{n-1} \lesssim s_0^{-p_{n-1}}Q^{n-1}.  \]
First observe that since $v_1$ is orthogonal to $\omega_0$, $\omega_1$ is almost orthogonal to $\omega_0$ and in particular it is linearly independent from $\omega_0$. Then since $||\gamma_0(\tilde{t}_0)-I(z_0^+(\tilde{t}_0))||=||\Pi_0^\perp(I(z_0^+(\tilde{t}_0))-I(z_0^+(0)))||$ is small (as $I(z_0^+(t)-I(z_0^+(0))$ has only small variation in the direction given by $\omega_0$), we also have
\[ ||\Pi_0(\nabla h(I(z_0^+(\tilde{t}_0))))-\omega_1||\sim s_1 \lesssim (T_1Q)^{-1}, \quad  T_1 \lesssim s_0^{-p_{n-1}}Q^{n-1}.  \]
Set $z_1=z_0^+(\tilde{t}_0)$ and $H_1=H_0^+$. On a small neighborhood $V_1$ of $z_1$ (small enough so that $V_1$ is still included in $V_0$) we can then construct, as in the first step, a resonant normal form with respect to $\omega_1$ up to an exponentially small remainder with respect to $m$. Unlike the step $j=0$ in which the perturbation was arbitrary, here the perturbation is given by the non-resonant part with respect to $\omega_0$, and this explains why it is sufficient to have an approximation of $\Pi_0(\nabla h(I(z_1)))$ and not of the full vector $\nabla h(I(z_1))$. Moreover, a careful construction of the new normalizing transformation $\Phi_1$ shows that the resonant part of $H_1 \circ \Phi_1$, whose flow commutes with the linear flow of frequency $\omega_1$, also commutes with the linear flow of frequency $\omega_0$. Exactly like in the first step, we arrive at a dichotomy which determines whether the algorithm stops (the variation of the action of the solution of $H_1 \circ \Phi_1$, and then of $H_1=H_0 \circ \Phi_0$ and $H_0=H$, is small and the theorem is proved) or moves to the next step with the gain that now both the directions $\omega_0$ and $\omega_1$, that are linearly independent, are  excluded from the directions along which a drift in the actions may appear before an exponentially long interval of time. Note that if the algorithm stops, the parameters $Q$ and $m$ have to be chosen according to
\begin{equation*}
s_i \lesssim 1, \quad mT_i\varepsilon \lesssim s_i, \quad mT_is_i \lesssim 1, \quad 0 \leq i \leq 1.
\end{equation*}
Observe that
\[ T_1 \lesssim s_0^{-p_{n-1}}Q^{n-1}\sim (T_0Q)^{p_{n-1}}Q^{n-1} \lesssim Q^{n(1+p_{n-1})-1} \]
and hence the compatibility conditions are satisfied if we choose $m \sim Q \sim \varepsilon^{-\frac{1}{2na_1}}$ and $\varepsilon \lesssim 1$, with $a_1=1+p_{n-1}$, and thus we obtain a result of exponential stability with respect to $m \sim \varepsilon^{-\frac{1}{2na_1}}$. In particular, as $2na_1>2n$, this stability result also holds true if the algorithm stopped at step $j=0$.

We have briefly explained how to pass from the step $j=0$ to the step $j=1$, and how the parameters $Q$ and $m$ are chosen if the algorithm stops at step $j=0$ or $j=1$. But of course, for any $0 \leq j \leq n-2$, one proceeds exactly the same way to go from step $j$ to step $j+1$.  The fact that the algorithm stops after $n$ steps  (if, of course, it didn't stop before) is clear since then $n$ linearly independent directions are  excluded from the directions along which a drift in the actions may appear before an exponentially long interval of time. More formally, in the case $j=n-1$, the resonant part in the normal form $H_{n-1}^+=H_{n-1} \circ \Phi_{n-1}$ consists of a Hamiltonian whose flow commutes with $n$ linearly independent linear flows with frequency $\omega_0, \dots, \omega_{n-1}$; it is plain to see that such a Hamiltonian is integrable, so $H_{n-1}^+$ consist of an exponentially small perturbation of some integrable Hamiltonian: the first case of the dichotomy thus holds, the algorithm stops and the theorem is proved. At each step $j$, the compatibility conditions are given by
\begin{equation*}
s_i \lesssim 1, \quad mT_i\varepsilon \lesssim s_i, \quad mT_is_i \lesssim 1, \quad 0 \leq i \leq j,
\end{equation*}
and using the fact that
\[ T_0 \lesssim Q^{n-1}, \quad T_j \lesssim (T_{j-1}Q)^{p_{n-j}}Q^{n-1}, \quad 1 \leq j \leq n-1, \]
we can choose, if the algorithm stops at step $j$, $m \sim Q \sim \varepsilon^{-\frac{1}{2na_{j}}}$ and $\varepsilon \lesssim 1$, with \[ a_0=1, \quad a_1=1+p_{n-1}, \quad a_j=1+p_{n-j}+\cdots+ p_{n-j}\dots p_{n-1}, \quad j \geq 2, \]
leading to a result of exponential stability with exponent $2na_j$. We have 
\[ a_{n-1}=1+p_1+p_1p_2+\cdots+p_1p_2\dots p_{n-1}=a,\] 
and since $2na_{n-1}>2na_{n-2}>\cdots >2na_0$, independently of the step $j$ at which the algorithm stops (and hence independently of the choice of $Q$ and $m$), we obtain a result of exponential stability with exponent $2na$. 

Let us now describe the plan of the proof. In \S\ref{s51}, our aim will be to obtain a suitable normal form (Proposition~\ref{normal}) for an abstract Hamiltonian $H_j$, where $0 \leq j \leq n-1$, which, as we explained in the heuristic description above, will be later related to our original Hamiltonian $H$ described in~\eqref{Ham} in the following way: $H_0=H$ and for $j \geq 1$, $H_j=H_{j-1} \circ \Phi_{j-1}=H_0 \circ \Phi_{0}\circ \cdots \circ \Phi_{j-1}$ where $\Phi_i$, for $0 \leq i \leq j-1$, is the normalizing transformation with respect to the periodic frequency $\omega_i$. Here the periodic frequencies $\omega_0, \dots, \omega_j$ are assumed to be known, $H_j$ is already normalized with respect to $\omega_0, \dots, \omega_{j-1}$ and our aim is to explain the construction of the transformation $\Phi_j$ which will further normalize $H_j$ with respect to $\omega_j$. The proof being technical, details will be given in Appendix~\ref{app}. In \S\ref{s52}, a partial stability result (in the direction given by the linear span of $\omega_0,\dots,\omega_{j}$, up to times of order $2^m$) will be easily deduced from the normal form Hamiltonian $H_j \circ \Phi_j$. Using this, we will introduce a first version of the algorithm dichotomy in Proposition~\ref{norm}: either stability holds for an exponentially long interval of time and  the algorithm stops, or a small drift in the action does appear in the orthocomplement of  $\omega_0,\dots,\omega_{j}$ and the algorithm should move to the next step. We will also prove in Proposition~\ref{norm} that if  $j=n-1$, then only the first alternative can be true. In \S\ref{s53} and \S\ref{s54}, we will examine the situation where the second alternative holds true (so necessarily $j \leq n-2$), and using the steepness property of $h$ (in \S\ref{s53}) and then Dirichlet's box principle (in \S\ref{s54}) we will prove how to pass from the step $j$ to the step $j+1$. In \S\ref{s55}, we summarize the work done in \S\ref{s51}, \S\ref{s52}, \S\ref{s53} and \S\ref{s54} in Proposition~\ref{algostep} that gives one step of the algorithm, and which clearly shows that at some given step, either the algorithm stops or it yields the hypotheses that allow to apply it again. Finally, in \S\ref{s56} we conclude the proof of Theorem~\ref{mainthm03}, which will follow easily from Proposition~\ref{algostep} by determining the parameters $Q\geq 1$ and $m\geq 1$ in terms of our small parameter $\varepsilon$.

\subsection{Normal form statement}\label{s51}

In this section we fix $0 \leq j \leq n-1$, and we assume the existence of periodic vectors $\omega_0, \dots, \omega_j$, with periods $T_0, \dots T_j$, which are linearly independent. For convenience we set $\omega_{-1}=0 \in \R^n$. We define the complex and real vector space
\[ \tilde{\Lambda}_j := \{ v \in \C^n \; | \; v \cdot \omega_{-1}=v \cdot \omega_{0}=\cdots=v \cdot \omega_{j-1}=0\}, \quad \Lambda_j:=\tilde{\Lambda}_j \cap \R^n, \]
which are of complex (respectively real) dimension $n-j$. Then we consider three positive real numbers $r_j$, $s_j$ and $\xi_j$, a point $z_j \in B_{r_j}$ and we define the complex domain
\begin{equation}\label{domj}
\mathcal{V}_{3s_j,3\xi_j}(z_j):=\{ z \in \C^{2n} \; | \; I(z)-I(z_j) \in \tilde{\Lambda}_j, \;  |I(z)-I(z_j)| < 3s_j, \; ||z||< r_j+3\xi_j \}
\end{equation}
where, for simplicity, the dependence on $r_j$ is omitted. We will simply write $||\,\cdot\,||_{3s_j,3\xi_j}$ for the supremum norm for vector fields defined on $\mathcal{V}_{3s_j,3\xi_j}(z_j)$ and, for $-1 \leq i \leq n-1$, we will denote by $l_{\omega_i}(z):=\omega_i\cdot I(z)$ and $X_{\omega_i}$ its associated Hamiltonian vector field. With our convention, the function $l_{\omega_{-1}}$ and its associated Hamiltonian vector field are identically zero.

Given a real number $0<\varepsilon<1$ and an integer $m\geq 1$, we can define a set of Hamiltonians as follows.

\begin{definition}\label{NF1}
The set $NF_j(\omega_{-1},\dots,\omega_{j-1},z_j,s_j,r_j,\xi_j,F,\varepsilon,m)$, or for short $NF_j$, consists of real-analytic Hamiltonians $H_j$ defined on $\mathcal{V}_{3s_j,3\xi_j}(z_j)$, and of the form
\begin{equation*}
\begin{cases}
H_j(z):=h(I(z))+g_j(z)+f_j(z), \quad z \in \mathcal{V}_{3s_j,3\xi_j}(z_j)  \\
h : \mathcal{D}_r \rightarrow \C, \quad ||\nabla^2 h(I)||_r \leq F, \quad I(\mathcal{V}_{3s_j,3\xi_j}(z_j)) \subseteq \mathcal{D}_r, \\
||X_{g_j}||_{3s_j,3\xi_j} \leq 2^j \varepsilon, \quad ||X_{f_j}||_{3s_j,3\xi_j} \leq  j2^{j-1} 2^{-m}\varepsilon, \\
\{l_{\omega_{-1}},g_j\}=\{l_{\omega_{0}},g_j\}=\cdots=\{l_{\omega_{j-1}},g_j\}=0.
\end{cases}.
\end{equation*}
\end{definition} 

Now let us introduce another definition, taking into account the periodic frequency $\omega_j$.

\begin{definition}\label{NF2}
The set $\widetilde{NF}_j(\omega_0,\dots,\omega_j,z_j,s_j,r_j,\xi_j,F,\varepsilon,m)$, or for short $\widetilde{NF}_j$, consists of real-analytic Hamiltonians $H_j \in NF_j(\omega_{-1},\dots,\omega_{j-1},z_j,s_j,r_j,\xi_j,F,\varepsilon,m)$ which satisfy the following additional conditions: if we denote $\tilde{\Pi}_j$ (respectively $\Pi_j$) the orthogonal projection onto $\tilde{\Lambda}_j$ (respectively $\Lambda_j$), then
\begin{equation}\label{zj}
||\tilde{\Pi}_j \nabla h(I(z_j))-\omega_j||=||\Pi_j \nabla h(I(z_j))-\omega_j|| \leq s_j
\end{equation}
and 
\begin{equation}\label{thr1}
s_j \leq (r_j+2\xi_j)\xi_j, \quad 2^j216(r_j+3\xi_j)mT_j\varepsilon  \leq s_j, \quad 72(3F\sqrt{n}+1)\xi_j^{-1}(r_j+3\xi_j)mT_js_j\leq 1.
\end{equation}
\end{definition}

The interest of the subset $\widetilde{NF}_j \subset {NF}_j$ is that if $H_j \in \widetilde{NF}_j$, then up to a change of coordinates $\Phi_j$ (which is real-analytic, symplectic and close to identity), we get that $H_j \circ \Phi_j \in NF_{j+1}$ which will constitute a main ingredient in our algorithm. Here's the precise statement.

\begin{proposition}\label{normal}
Let $H_j \in \widetilde{NF}_j(\omega_0,\dots,\omega_j,z_j,s_j,r_j,\xi_j,F,\varepsilon,m)$. Then there exist a real-analytic symplectic embedding 
\[ \Phi_j :  \mathcal{V}_{2s_j,2\xi_j}(z_j) \rightarrow \mathcal{V}_{3s_j,3\xi_j}(z_j), \quad \Phi_j\left(\mathcal{V}_{2s_j,2\xi_j}(z_j)\right)  \supseteq \mathcal{V}_{s_j,\xi_j}(z_j),\] 
such that $H_j^+:=H_j \circ \Phi_j = h +g_j^+ +f_j^+$ with
\begin{equation}\label{resonant}
\{l_{\omega_{-1}},g_j^+\}=\{l_{\omega_{0}},g_j^+\}=\cdots=\{l_{\omega_{j}},g_j^+\}=0,
\end{equation} 
and with the estimates
\begin{equation}\label{estimnormal}
||X_{g_j^+}||_{2s_j,2\xi_j} \leq 2^{j+1} \varepsilon, \quad ||X_{f_j^+}||_{2s_j,2\xi_j} \leq (j+1)2^j2^{-m}\varepsilon,  
\end{equation}
\begin{equation}\label{estimdist}
||\Phi_j - \mathrm{Id}||_{2s_j,2\xi_j} \leq 2^{j+1} T_j \varepsilon. 
\end{equation}
In particular, $H_j^+ \in NF_{j+1}(\omega_{-1},\dots,\omega_j,z_{j+1},s_{j+1},r_{j+1},\xi_{j+1},F,\varepsilon,m)$ given any choice of $s_{j+1}$, $r_{j+1}$, $\xi_{j+1}$ and $z_{j+1} \in B_{r_{j+1}}$ for which the inclusion $\mathcal{V}_{3s_{j+1},3\xi_{j+1}}(z_{j+1}) \subseteq \mathcal{V}_{2s_j,2\xi_j}(z_j)$ holds true.\end{proposition}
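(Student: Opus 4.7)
The plan is to construct $\Phi_j$ as the composition $\Phi_j = \phi^1_{X_{\chi^{(0)}}} \circ \cdots \circ \phi^1_{X_{\chi^{(m-1)}}}$ of $m$ time-one Hamiltonian flows, each generated by a solution of a cohomological equation associated with the $T_j$-periodic flow of $X_{l_{\omega_j}}$, in the spirit of the one-phase averaging scheme of~\cite{BN12,Bou11cmp}. Each iteration halves the size of the $\omega_j$-non-resonant part of the perturbation, which after $m$ steps yields the $2^{-m}\varepsilon$ bound in~\eqref{estimnormal}.

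First I would establish a cohomological equation lemma: for a real-analytic function $\phi$ on $\mathcal{V}_{3s_j,3\xi_j}(z_j)$, setting $[\phi]_j(z) := T_j^{-1}\int_0^{T_j}\phi(\phi^t_{X_{l_{\omega_j}}}(z))\,dt$ and
\[ \chi(z) := \frac{1}{T_j}\int_0^{T_j} t\,\bigl(\phi - [\phi]_j\bigr)\bigl(\phi^t_{X_{l_{\omega_j}}}(z)\bigr)\,dt, \]
one obtains $\{l_{\omega_j},\chi\} = \phi - [\phi]_j$ with $\|X_\chi\|$ bounded by $T_j\|X_\phi\|$ on a slightly shrunk sub-domain; the shrinkage, quantified by Cauchy estimates on the displacement of $\phi^t_{X_{l_{\omega_j}}}$ during one period, is precisely where the factor $T_j s_j$ in the third inequality of~\eqref{thr1} enters. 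A critical feature is that, since the flows of $X_{l_{\omega_i}}$ for $i < j$ commute with $X_{l_{\omega_j}}$ (both being functions of $I$ alone), the operators $[\,\cdot\,]_j$ and $\phi \mapsto \chi$ preserve the invariance relations $\{l_{\omega_i},\,\cdot\,\} = 0$ for $i < j$, so the resonant parts produced along the iteration retain commutation with all previous $l_{\omega_i}$.

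For the iteration proper, I would set $H_j^{(0)} := H_j$ and decompose inductively $H_j^{(k)} = h + G^{(k)} + F^{(k)}$ where $G^{(k)}$ is already in $\omega_j$-normal form and $F^{(k)}$ is the remaining non-resonant perturbation, of size controlled by $2^{j+1-k}\varepsilon$. At step $k$ one solves $\{l_{\omega_j},\chi^{(k)}\} = [F^{(k)}]_j - F^{(k)}$ and applies $\phi^1_{X_{\chi^{(k)}}}$; expanding the Lie series gives
\[ H_j^{(k+1)} = h + G^{(k)} + [F^{(k)}]_j + \{h - l_{\omega_j},\chi^{(k)}\} + R^{(k)}, \]
where $R^{(k)}$ gathers the quadratic Poisson brackets, $[F^{(k)}]_j$ is absorbed into $G^{(k+1)}$, and the remaining terms define $F^{(k+1)}$. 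The decisive point is the smallness of $\{h - l_{\omega_j},\chi^{(k)}\}$: since $\chi^{(k)}$ is invariant under $X_{l_{\omega_i}}$ for $i < j$, the $\Lambda_j^\perp$-component of $\nabla h - \omega_j$ contributes trivially to the bracket, while on $\mathcal{V}_{3s_j,3\xi_j}(z_j)$ the $\tilde{\Lambda}_j$-component satisfies $\|\tilde{\Pi}_j(\nabla h(I(z))-\omega_j)\| \lesssim s_j + F s_j$ by~\eqref{zj} and $\|\nabla^2 h\|_r \leq F$. Combined with $\|X_{\chi^{(k)}}\| \lesssim T_j\|X_{F^{(k)}}\|$, this yields a contraction factor $\leq 1/2$ under~\eqref{thr1}; the quadratic brackets in $R^{(k)}$ contract analogously via Cauchy estimates, with the loss of domain at each step being a fraction of $\xi_j/m$ in $z$ and of $s_j/m$ in $I$.

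The main obstacle lies in the simultaneous bookkeeping of three constraints over the $m$ iterations: the geometric halving of $\|X_{F^{(k)}}\|$, the analyticity on a domain that still contains $\mathcal{V}_{2s_j,2\xi_j}(z_j)$ after cumulative shrinkage, and the preservation of $\{l_{\omega_i}, G^{(k)}\} = 0$ for all $i \leq j-1$. The three inequalities in~\eqref{thr1} are tuned precisely so that the total $I$-displacement ($\lesssim mT_j 2^j\varepsilon$) stays within $s_j$, the total $z$-displacement within $\xi_j$, and each quadratic Poisson bracket indeed contracts. Once the iteration closes, the distance estimate~\eqref{estimdist} follows by telescoping the individual $\|\phi^1_{X_{\chi^{(k)}}} - \mathrm{Id}\|$; the inclusion $\Phi_j(\mathcal{V}_{2s_j,2\xi_j}(z_j)) \supseteq \mathcal{V}_{s_j,\xi_j}(z_j)$ follows from the fact that $\Phi_j^{-1}$ is equally close to the identity; and the final membership $H_j^+ \in NF_{j+1}$ on any sub-domain of the prescribed form is then immediate from~\eqref{resonant} and~\eqref{estimnormal}.
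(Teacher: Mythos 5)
Your overall plan mirrors the paper's: build $\Phi_j$ as the composition of $m$ time-one Hamiltonian flows, each solving a cohomological equation obtained by averaging over the $T_j$-periodic flow of $X_{l_{\omega_j}}$; estimate $\{h-l_{\omega_j},\chi\}$ by passing to the restriction of $h$ to the affine subspace $\tilde{\lambda}_j$ (equivalently, projecting $\nabla h-\omega_j$ onto $\tilde{\Lambda}_j$ and using~\eqref{zj} together with $\|\nabla^2 h\|_r\leq F$); and use the commutation of the $\omega_i$-flows to show that the averaging operators preserve the relations $\{l_{\omega_i},\cdot\}=0$ for $i<j$. There is, however, a genuine gap in the initialization of the iteration.

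You start from $H_j^{(0)}:=H_j=h+g_j+f_j$, so the initial non-resonant part $F^{(0)}$ contains $f_j$. The invariance-preservation mechanism you correctly identify only applies to inputs that already satisfy $\{l_{\omega_i},\cdot\}=0$ for $i<j$; this holds for $g_j$ by the definition of $NF_j$, but the definition imposes no commutation constraint on $f_j$ (and indeed the inductive construction of $f_{j}$ as $f_{j-1}^{m}+f_{j-1}\circ\Phi_{j-1}$ destroys any such invariance). Hence $[F^{(0)}]_j$, which you absorb into $G^{(1)}$, would fail to commute with the previous $l_{\omega_i}$, and the final $g_j^+ = G^{(m)}$ would not satisfy~\eqref{resonant}. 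The paper avoids this by normalizing only $H_j-f_j=h+g_j$ through the $m$-fold scheme (with $f_j^0:=g_j$, which does have the needed invariance) and composing $f_j$ with the final $\Phi_j$ afterwards: since $f_j$ is already of size $j2^{j-1}2^{-m}\varepsilon$, the pullback $f_j\circ\Phi_j$ is controlled directly by a telescoping product of the Cauchy-type bound of Lemma~\ref{tech2} and is simply added to $f_j^m$ to form $f_j^+$, without touching the resonant part. This separation of $f_j$ from the iteration is not a bookkeeping convenience; it is what makes the resonance relations~\eqref{resonant} hold for the output, and your scheme as stated would not produce them.
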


The proof of Proposition~\ref{normal}, which is technical, is deferred to Appendix~\ref{app}. The second part of Proposition~\ref{normal} follows easily from the first: if we define $H_{j+1}:=H_j^+$, $g_{j+1}:=g_j^+$ and $f_{j+1}:=f_j^+$, then~\eqref{resonant} read
\[ \{l_{\omega_{-1}},g_{j+1}\}=\{l_{\omega_{0}},g_{j+1}\}=\cdots=\{l_{\omega_{j}},g_{j+1}\}=0,\] 
whereas the inclusion $\mathcal{V}_{3s_{j+1},3\xi_{j+1}}(z_{j+1}) \subseteq \mathcal{V}_{2s_j,2\xi_j}(z_j)$ yields
\[ I(\mathcal{V}_{3s_{j+1},3\xi_{j+1}}(z_{j+1})) \subseteq I(\mathcal{V}_{2s_j,2\xi_j}(z_j))  \subseteq I(\mathcal{V}_{3s_j,3\xi_j}(z_j)) \subseteq \mathcal{D}_r \] 
and, together with~\eqref{estimnormal}, the estimates
\begin{equation*}
||X_{g_{j+1}}||_{3s_{j+1},3\xi_{j+1}} \leq ||X_{g_{j+1}}||_{2s_j,2\xi_j} \leq 2^{j+1} \varepsilon,
\end{equation*}
\begin{equation*}
||X_{f_{j+1}}||_{3s_{j+1},3\xi_{j+1}} \leq ||X_{f_{j+1}}||_{2s_j,2\xi_j} \leq (j+1)2^j2^{-m}\varepsilon. 
\end{equation*}
This exactly means that $H_{j+1}=H_j^+ \in NF_{j+1}(\omega_{-1},\dots,\omega_j,z_{j+1},s_{j+1},r_{j+1},\xi_{j+1},F,\varepsilon,m)$.

\subsection{Use of the normal form}\label{s52}

From now on, we will mainly work on the real domains
\[ V_{2s_j,2\xi_j}(z_j):=\mathcal{V}_{2s_j,2\xi_j}(z_j) \cap \R^{2n}, \quad 0 \leq j \leq n-1. \]
The normal form in Proposition~\ref{normal} is used to show that given a solution $z_j^+(t)$ of the Hamiltonian system associated to $H_j^+=H_j \circ \Phi_j$, the curve $I(z_j^+(t))$ has a small variation in the direction spanned by $\omega_0, \dots, \omega_{j}$, which is nothing but $\Lambda_{j+1}^\perp$ (the orthocomplement of $\Lambda_{j+1}$), for times $|t|$ as large as the inverse of $||X_{f_j^+}||_{2s_j,2\xi_j}$. It may well happen that in the direction given by $\Lambda_{j+1}$, the curve $I(z_j^+(t))$ has also a small variation and hence $I(z_j(t))$ where $z_j(t)=\Phi_j (z_j^+(t))$, has small variation, which yields our confinement result. But if not, that is if there is a faster deviation of $I(z_j^+(t))$ from $I(z_j^+(0))$, this has to occur in the direction given by $\Lambda_{j+1}$. Here is a precise statement.

\begin{proposition}\label{norm}
For $0 \leq j \leq n-1$, let $H_j \in \widetilde{NF}_j(\omega_0,\dots,\omega_j,z_j,s_j,r_j,\xi_j,F,\epsilon,m)$ and $\Phi_j :  \mathcal{V}_{2s_j,2\xi_j}(z_j) \rightarrow \mathcal{V}_{3s_j,3\xi_j}(z_j)$ given by Proposition~\ref{normal}, and let $z_j^+(t)$ be the forward solution of the Hamiltonian $H_j^+=H_j \circ \Phi_j$ starting at $z_j^+:=\Phi_j^{-1}(z_j)$. If $0 \leq j \leq n-2$, and if we define
\begin{equation} \label{eqtj} \bar{t}_j:=(r_j+\xi_j)^{-1}(j+1)^{-1}2^{-j}s_j^{-1}2^m,
\end{equation}
then we have the following dichotomy: 
\begin{itemize}
\item[$(1)$] either $z_j^+(t) \in V_{s_j,\xi_j}(z_j)$ for $0 \leq t \leq \bar{t}_j$,
\item[$(2)$] or there exists a positive time $t_j^+ < \bar{t}_j$ such that
\[ |I(z_j^+(t_j^+))-I(z_j)|=s_j/4 \]
and, for $0 \leq t \leq t_j^{+}$,
\begin{equation}\label{bazar}
z_j^+(t) \in V_{s_j,\xi_j}(z_j), \quad |I(z_j^+(t))-I(z_j)| \leq s_j/4, \quad |\Pi_{j+1}^\perp(I(z_j^+(t))-I(z_j))| \leq s_j^{-1}\varepsilon.
\end{equation} 
\end{itemize}
If $j=n-1$, and if we define 
\begin{equation} \label{eqtnn}
 \bar{t}_{n-1}:=(r_{n-1}+\xi_{n-1})^{-1}n^{-1}2^{-(n-1)}s_{n-1}\varepsilon^{-1}2^m, 
\end{equation} 
then $z_{n-1}^+(t) \in V_{s_{n-1},\xi_{n-1}}(z_{n-1})$, for $0 \leq t \leq \bar{t}_{n-1}$.
\end{proposition}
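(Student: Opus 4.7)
My approach is a bootstrap/continuity argument on the trajectory of $H_j^+ := H_j \circ \Phi_j = h + g_j^+ + f_j^+$ obtained by applying Proposition~\ref{normal}. The initial condition is $z_j^+ := \Phi_j^{-1}(z_j)$, which lies in $\mathcal{V}_{2s_j,2\xi_j}(z_j)$ and satisfies $\|z_j^+ - z_j\| \leq 2^{j+1} T_j \varepsilon \ll \xi_j$ by \eqref{estimdist} and the threshold \eqref{thr1}. The idea is to exploit the resonance of $g_j^+$ with respect to $\omega_0, \ldots, \omega_j$ in order to kill the ``dangerous'' part of the action drift along $\Lambda_{j+1}^\perp$, so that only the exponentially small remainder $f_j^+$ can produce motion in that direction.

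\textbf{The key resonance calculation.} Along the trajectory $z_j^+(t)$ one has $\frac{d}{dt} I(z_j^+(t)) = \{I, g_j^+\}(z_j^+(t)) + \{I, f_j^+\}(z_j^+(t))$ since $\{I_k, h(I)\} = 0$. The normalization identities \eqref{resonant} can be rewritten as $\omega_i \cdot \{I, g_j^+\} = \{l_{\omega_i}, g_j^+\} = 0$ for $0 \leq i \leq j$, which means that $\{I, g_j^+\}(z)$ lies pointwise in $\tilde{\Lambda}_{j+1}$. Projecting by $\Pi_{j+1}^\perp$ therefore eliminates the $g_j^+$ contribution entirely, and using $|\{I_k, f_j^+\}(z)| \leq \|X_{f_j^+}\| \cdot \|\nabla I_k(z)\| \leq \|X_{f_j^+}\| \cdot \|z\|$ together with \eqref{estimnormal} yields, as long as $z_j^+(t)$ remains in $V_{s_j, \xi_j}(z_j)$,
\[ \bigl|\Pi_{j+1}^\perp\bigl(I(z_j^+(t)) - I(z_j^+(0))\bigr)\bigr| \leq t \cdot (r_j + \xi_j) \cdot (j+1) 2^j 2^{-m}\varepsilon. \]
By the very choice of $\bar{t}_j$ in \eqref{eqtj}, this bound is exactly $s_j^{-1}\varepsilon$ at $t = \bar{t}_j$.

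\textbf{Dichotomy, case $0 \leq j \leq n-2$.} Define $t_j^+$ to be the first time in $[0, \bar{t}_j]$ where $|I(z_j^+(t)) - I(z_j)| = s_j/4$, or $+\infty$ if no such time exists. If $t_j^+ \geq \bar{t}_j$ the action stays $s_j/4$-close to $I(z_j)$ on the whole interval, and a parallel continuity argument (combining $\|z_j^+(0) - z_j\| \ll \xi_j$ with the bound on $\|X_{H_j^+}\|$) shows that $z_j^+(t)$ remains in $V_{s_j, \xi_j}(z_j)$, yielding case~(1). Otherwise $t_j^+ < \bar{t}_j$, continuity gives $|I(z_j^+(t_j^+)) - I(z_j)| = s_j/4$ exactly, and on $[0, t_j^+]$ the resonant estimate above combined with the small initial discrepancy $|I(z_j^+(0)) - I(z_j)| \leq \|z_j\| \cdot \|z_j^+ - z_j\| \ll s_j^{-1}\varepsilon$ (controlled by \eqref{thr1}) gives the claimed bound on $\Pi_{j+1}^\perp$.

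\textbf{Case $j = n-1$ and main obstacle.} When $j = n-1$, the vectors $\omega_0, \ldots, \omega_{n-1}$ are linearly independent and span $\R^n$, so the $n$ identities $\omega_i \cdot \{I, g_{n-1}^+\} = 0$ force $\{I_k, g_{n-1}^+\} = 0$ for every $k$. This means $g_{n-1}^+$ is invariant under each commuting circle action generated by $I_k$, hence $g_{n-1}^+(z) = G(I(z))$ for some analytic $G$, so that $h + g_{n-1}^+$ is integrable and preserves $I$. The entire action drift then comes from $f_{n-1}^+$ alone, and integrating $|\dot I| \leq (r_{n-1}+\xi_{n-1}) \, n \, 2^{n-1} 2^{-m}\varepsilon$ up to $\bar{t}_{n-1}$ from \eqref{eqtnn} produces a total variation bounded by $s_{n-1}$, keeping the trajectory inside $V_{s_{n-1},\xi_{n-1}}(z_{n-1})$. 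The hardest step throughout is precisely the bookkeeping that $z_j^+(t)$ genuinely remains in the real domain $V_{s_j,\xi_j}(z_j)$---controlling simultaneously the $z$-norm (from $\|\Phi_j - \mathrm{Id}\|$ and $\|X_{H_j^+}\|$), the size $|I(\cdot) - I(z_j)|$, and the consistency with the affine constraint associated to $\tilde{\Lambda}_j$; the three threshold inequalities in \eqref{thr1} have been calibrated so that each of these contributions is absorbed by a constant fraction of $s_j$ or $\xi_j$.
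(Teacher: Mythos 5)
Your resonance calculation — projecting $\dot I = \{I, g_j^+\} + \{I, f_j^+\}$ by $\Pi_{j+1}^\perp$, noting that $\{I, g_j^+\}$ lies pointwise in $\tilde\Lambda_{j+1}$, and integrating the $f_j^+$ contribution up to $\bar t_j$ — is correct and matches the heart of the paper's argument. But the confinement part has a genuine gap.

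You define $t_j^+$ as the first hitting time of $|I(z_j^+(t)) - I(z_j)| = s_j/4$ and then assert, for case (1), that a ``parallel continuity argument (combining $\|z_j^+(0) - z_j\| \ll \xi_j$ with the bound on $\|X_{H_j^+}\|$) shows that $z_j^+(t)$ remains in $V_{s_j,\xi_j}(z_j)$.'' This is the step that fails: $\|X_{H_j^+}\|$ is of order $E + \varepsilon$, i.e.\ order one, while $\bar t_j \sim 2^m$ is exponentially long, so integrating the vector field over $[0,\bar t_j]$ gives no control whatsoever on $\|z_j^+(t)\|$. The only reason the $z$-norm stays bounded over such times is that the solution is \emph{real}: for $z \in \R^{2n}$ one has $\|z\|^2 = 2|I(z)|$, so bounding the action discrepancy $|I(z_j^+(t)) - I(z_j)|$ directly bounds $\|z_j^+(t)\|^2 \leq \|z_j\|^2 + 2|I(z_j^+(t))-I(z_j)| < r_j^2 + s_j/2 < (r_j+\xi_j)^2$, where the last inequality comes from the first threshold in \eqref{thr1}. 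Without invoking this reality/positivity identity, neither case (1) nor the ``$z_j^+(t) \in V_{s_j,\xi_j}(z_j)$'' part of case (2) is justified. The paper organizes this differently: it takes $t_j^e$ to be the exit time from $V_{s_j,\xi_j}(z_j)$ itself, and when the exit happens on the boundary $\|z\|=r_j+\xi_j$, it uses $\|z\|^2 = 2|I(z)|$ to show the action discrepancy must already be $> s_j/4$, so both escape routes collapse into the single condition $|I(z_j^+(t_j^+))-I(z_j)| = s_j/4$.

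A secondary point: your bound $|I(z_j^+(0)) - I(z_j)| \ll s_j^{-1}\varepsilon$ is not supported. From \eqref{estimdist} and \eqref{thr1} one gets $|I(z_j^+(0)) - I(z_j)| \lesssim s_j/m$, which need not be smaller than $s_j^{-1}\varepsilon$ (the threshold $\varepsilon < s_j^2/8$ goes in the opposite direction). What saves the $\Pi_{j+1}^\perp$ estimate in \eqref{bazar} is that the size of this initial discrepancy is uniformly $O(s_j/m)$, small compared to $s_j/4$, not small compared to $s_j^{-1}\varepsilon$; you should keep this contribution explicit rather than discarding it.

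Your identification that when $j=n-1$ the $n$ commutation relations force $g_{n-1}^+$ to be integrable is a nice observation and is morally why the algorithm terminates, but the paper's actual proof of the $j=n-1$ case is more economical: it applies the same $\Pi_n^\perp = \mathrm{Id}$ argument to bound $|I(z_{n-1}^+(t)) - I(z_{n-1})| \leq s_{n-1}$ over $\bar t_{n-1}$ and again closes the loop with $\|z\|^2 = 2|I(z)|$, without ever decomposing $g_{n-1}^+$. Both routes work once the reality argument is in place.
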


\begin{proof}
First observe that since the image of $\Phi_j$ contains $V_{s_j,\xi_j}(z_j)$, it contains $z_j$ and so $z_j^+=\Phi_j^{-1}(z_j)$ is well-defined, and we have, using~\eqref{estimdist} and the first two inequalities of~\eqref{thr1},
\[ ||z_j^+-z_j||=||z_j^+-\Phi_j(z_j^+)|| \leq 2^{j+1}T_j\varepsilon \leq s_j(108(r_j+3\xi_j))^{-1} \leq \xi_j (r_j+2\xi_j)(108(r_j+3\xi_j))^{-1} \]
which easily implies  
\begin{equation}\label{confin}
||z_j^+-z_j|| < \xi_j/108, \quad |I(z_j^+)-I(z_j)|< s_j/108.
\end{equation}
Observe also that since $z_j$ is real and $H_j$ and $\Phi_j$ are reals, the forward solution $z_j^+(t)$ is real.  

We first consider the case $0 \leq j \leq n-2$. Using~\eqref{confin}, we can now define $t_j^e \in (0,+\infty]$ to be the time of first exit of $z_j^+(t)$ from $V_{s_j,\xi_j}(z_j)$. We claim that the dichotomy of the statement is implied by the following trivial dichotomy: either $\bar{t}_j < t_j^e$ or $t_j^e \leq \bar{t}_j$.

Indeed, in the first case, one obviously have
\[ z^+_j(t) \in V_{r_j,\xi_j}(z_j), \quad 0 \leq t \leq \bar{t}_j. \]  
In the second case, either $|I(z_j^+(t_j^e))-I(z_j)|=s_j$ or $||z_j^+(t_j^e)||=r_j+\xi_j$. But since the solution is real, the second possibility implies that
\[ |I(z_j^+(t_j^e))|=1/2||z_j^+(t_j^e)||^2=1/2(r_j^2+\xi_j^2)+r_j\xi_j, \]
while
\[ |I(z_j)|=1/2||z_j||^2<1/2 r_j^2 \] 
and therefore, using the first inequality of~\eqref{thr1}, we obtain
\[ |I(z_j^+(t_j^e))-I(z_j)| \geq |I(z_j^+(t_j^e))|-|I(z_j)| > \xi_j(1/2\xi_j+r_j) > s_j/4.  \]
So, whether $|I(z_j^+(t_j^e))-I(z_j)|=s_j$ or $||z_j^+(t_j^e)||=r_j+\xi_j$, there exists a positive time $t_j^+ < t_j^e \leq \bar{t}_j$ such that
\[ |I(z_j^+(t_j^+))-I(z_j)| =s_j/4 \]
and
\[ |I(z_j^+(t))-I(z_j)| \leq s_j/4, \quad 0 \leq t \leq t_j^+. \]
Since $t_j^+ < t_j^e$, $z_j^+(t) \in V_{s_j,\xi_j}(z_j)$ for $0 \leq t \leq t_j^+$. It remains to show that
\[ |\Pi_{j+1}^\perp(I(z_j^+(t))-I(z_j))| \leq s_j^{-1}\varepsilon, \quad 0 \leq t \leq t_j^+. \]
Since $h$ is integrable, for $0 \leq s \leq t \leq t_j^+$,
\begin{eqnarray*}
\frac{d}{dt}I(z_j^+(s)) & = & \{I,H_j^+\}(z_j^+(s)) := (\{I_1,H_j^+\}(z_j^+(s)),\dots,\{I_n,H_j^+\}(z_j^+(s))) \\
& = & \{I,h+g_j^++f_j^+\}(z_j^+(s))=\{I,g_j^++f_j^+\}(z_j^+(s)). 
\end{eqnarray*}
Then, for any $z \in V_{s_j,\xi_j}(z_j)$ and any $-1 \leq i \leq j$, using~\eqref{resonant} we obtain
\[ \{l_{\omega_{i}},g_j^+\}(z)=\omega_i\cdot \{I,g_j^+\}(z)=0 \]
which implies that $\{I,g_j^+\}(z) \in \Lambda_{j+1}$. Therefore
 \[ \Pi_{j+1}^\perp\left(\frac{d}{dt}I(z_j^+(s))\right)=\{I,f_j^+\}(z_j^+(s)) \] 
and hence
\[ \Pi_{j+1}^\perp(I(z_j^+(t))-I(z_j))=\int_0^t \Pi_{j+1}^\perp\left(\frac{d}{dt}I(z_j^+(s))\right) ds=\int_0^t \{I,f_j^+\}(z_j^+(s)) ds  \]
and, using the second inequality of~\eqref{estimnormal} and the fact that
\[|\{I,f_j^+\}(z_j^+(s))|\leq ||z_j^+(s)||||X_f(z_j^+(s))||\]
we obtain
\[ |\Pi_{j+1}^\perp(I(z_j^+(t))-I(z_j))| \leq t (r_j+\xi_j) ||X_{f_j^+}||_{2s_j,2\xi_j} \leq t (r_j+\xi_j)(j+1)2^j2^{-m}\varepsilon.  \]
Since $t_j^+ \leq t_j^e \leq \bar{t}_j=(r_j+\xi_j)^{-1}(j+1)^{-1}2^{-j}s_j^{-1}2^m$, we thus obtain
\[ |\Pi_{j+1}^\perp(I(z_j^+(t))-I(z_j))|\leq s_j^{-1}\varepsilon, \quad 0 \leq t \leq t_j^{+}, \]
which concludes the proof for the case $0 \leq j \leq n-2$.

Now for the case $j=n-1$, we have $\Lambda_{n}=\{0\}$ and so $\Lambda_{n}^\perp=\R^n$, hence for $t \leq \bar{t}_{n-1}=(r_{n-1}+\xi_{n-1})^{-1}n^{-1}2^{-(n-1)}s_{n-1}\varepsilon^{-1}2^m$, repeating the last argument we get
\[ |I(z_{n-1}^+(t))-I(z_{n-1})| \leq s_{n-1}, \quad 0 \leq t \leq \bar{t}_{n-1}.  \]
As $z_{n-1}^+(t)$ is real, this implies, using also the first inequality of~\eqref{thr1}, that for $0 \leq t \leq \bar{t}_{n-1}$, 
\begin{eqnarray*}
||z_{n-1}^+(t)||^2 & = & 2 |I(z_{n-1}^+(t))| \\
& \leq & 2|I(z_{n-1}^+(t))-I(z_{n-1})| + 2 |I(z_{n-1})| \\
& \leq & 2s_{n-1}+||z_{n-1}(t)||^2 \\
& \leq & 2(r_{n-1}+2\xi_{n-1})\xi_{n-1}+r_{n-1}^2 \\
& \leq & (r_{n-1}+2\xi_{n-1})^2
\end{eqnarray*}
so $z_{n-1}^+(t) \in V_{s_{n-1},2\xi_{n-1}}(z_{n-1})$ for $0 \leq t \leq \bar{t}_{n-1}$, and this concludes the proof of the proposition.   
\end{proof}

\subsection{Use of the steepness property}\label{s53}

Let us start by giving a geometric interpretation of the steepness property, as its definition is quite abstract. Assume that $h$ is steep on some domain $D$, and consider a curve $\gamma : [0,1] \rightarrow \R^n$ which takes values in $\lambda \cap D$, where $\lambda$ is a proper affine subspace of $\R^n$. It may happen that $\nabla h_\lambda(\gamma(0))=0$ (this is the case if $\gamma(0)$ is a resonant point for $h$, that is, if $k\cdot \nabla h(\gamma(0))$ for some non-zero integer vector $k \in \Z^n$: then $\nabla h_\lambda(\gamma(0))=0$ where $\lambda$ is the real space generated by such integer vectors $k$). If this happens, the steepness property ensures that, for some time $0 <\tilde{t} \leq 1$, $\nabla h_\lambda(\gamma(\tilde{t})) \neq 0$ (informally, in terms of resonances, this means that we do not have ``accumulation of resonances"). Moreover, the longer is the length of the curve $\gamma$, the farther away from zero is the vector $\nabla h_\lambda(\gamma(\tilde{t}))$. Here's a quantitative statement, which is due to Nekhoroshev.

\begin{lemma}[Nekhoroshev]\label{lemmenekho}
Let $h$ be a function which is $(r,\kappa,C,\delta,(p_l)_{l=1,\ldots,n-1})$-steep, and such that
\[ ||\nabla^2 h (I)||_r \leq F. \]
Let $\gamma : [0,t^+] \rightarrow \R^n$ be a continuous curve, $\lambda$ an affine subspace of $\R^n$ of dimension $l$, where $1 \leq l \leq n-1$, and $d$ a positive real number. Assume that
\begin{itemize}
\item[$(i)$] for all $t \in [0,t^+]$, $\gamma(t) \in \lambda
$;
\item[$(ii)$] for all $t \in [0,t^+]$, $||\gamma(0)- \gamma(t)||\leq d$ and $||\gamma(0)-\gamma(t^+)||=d$;
\item[$(iii)$] the ball $\{I \in \R^n \; | \; ||I-\gamma(0)||\leq d\}$ is contained in $D_r$;
\item[$(iv)$] $d < \min\{\delta,(3F)^{-1}\kappa,2(5\kappa(4C)^{-1})^{1/p_l}\}$,
\end{itemize}
then there exists a time $\tilde{t} \in [0,t^+]$ such that 
\[ ||\Pi_\Lambda \nabla h(\gamma(\tilde{t}))||>C/5(d/2)^{p_l}, \]
where $\Lambda$ is the vector space associated to $\lambda$, and $\Pi_\Lambda$ the orthogonal projection onto $\Lambda$.
\end{lemma}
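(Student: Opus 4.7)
The strategy is to apply the steepness inequality at the base point $\gamma(0)$ with scale $\xi=d/2$, transfer the resulting estimate onto the curve $\gamma$ via the intermediate value theorem, and dispose of the remaining case by a simple dichotomy on the size of $\Pi_\Lambda\nabla h(\gamma(0))$.

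More precisely, since $d<\delta$ by (iv), the $(r,\kappa,C,\delta,(p_l))$-steepness of $h$ applied with $I=\gamma(0)$ (which lies in $\lambda$ by (i)) and $\xi=d/2$ produces some $\eta^*\in[0,d/2]$ such that
$$\min_{\|I'-\gamma(0)\|=\eta^*,\,I'\in \lambda\cap D_r}\|\nabla h_\lambda(I')-\nabla h_\lambda(\gamma(0))\|>C(d/2)^{p_l}.$$
The continuous function $t\mapsto \|\gamma(t)-\gamma(0)\|$ runs from $0$ to $d\geq\eta^*$ on $[0,t^+]$, so the intermediate value theorem yields $\tilde t\in[0,t^+]$ with $\|\gamma(\tilde t)-\gamma(0)\|=\eta^*\leq d/2$. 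Hypotheses (i), (ii) and (iii) together force $\gamma(\tilde t)\in\lambda\cap D_r$, so the steepness bound indeed applies at $I'=\gamma(\tilde t)$. Using the elementary identification $\nabla h_\lambda(x)=\Pi_\Lambda\nabla h(x)$ valid for every $x\in\lambda$, the inequality rewrites as
$$\|\Pi_\Lambda\nabla h(\gamma(\tilde t))-\Pi_\Lambda\nabla h(\gamma(0))\|>C(d/2)^{p_l}.$$

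At this point a case split concludes. If $\|\Pi_\Lambda\nabla h(\gamma(0))\|>(C/5)(d/2)^{p_l}$ the choice $\tilde t=0$ is already adequate; otherwise the triangle inequality gives
$$\|\Pi_\Lambda\nabla h(\gamma(\tilde t))\|>C(d/2)^{p_l}-(C/5)(d/2)^{p_l}=(4C/5)(d/2)^{p_l}>(C/5)(d/2)^{p_l},$$
which is the claimed estimate. The argument contains no serious obstacle; the only delicate point is ensuring that the time $\tilde t$ produced by the intermediate value theorem lands in the restricted domain $\lambda\cap D_r$ over which the steepness quantifier is taken, and this is exactly what (i)--(iii) deliver. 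The remaining pieces of (iv) beyond $d<\delta$ are not really used for the proof of this lemma per se, but ensure that the output meshes with the global Nekhoroshev scheme: the bound $d<(3F)^{-1}\kappa$ prevents $\nabla h$ from varying by more than $\kappa/3$ across the ball, keeping the full gradient comparable to $\kappa$, and the bound $d<2(5\kappa/(4C))^{1/p_l}$ is precisely the requirement that the lower bound $(C/5)(d/2)^{p_l}$ be dominated by $\kappa$, consistently with $\Pi_\Lambda$ being an orthogonal projection of a vector of norm at least $\kappa$.
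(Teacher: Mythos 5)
Your proof is correct, and it is genuinely a new, self-contained argument: the paper does not prove this lemma at all, but instead cites Nekhoroshev's 1979 paper for the more general ``almost plane curves'' lemma, observing that the present statement is the ``plane curves'' special case. Your approach exploits precisely the planarity hypothesis (i): apply the steepness inequality at the base point $\gamma(0)$ with scale $\xi=d/2$ to get a good radius $\eta^*\in(0,d/2]$ (note $\eta^*\neq 0$ since the inner minimum vanishes at $\eta=0$), use the intermediate value theorem on $t\mapsto\|\gamma(t)-\gamma(0)\|$ to land a curve point at exactly that radius, check via (i)--(iii) that this point lies in $\lambda\cap D_r$ so the quantifier in the steepness definition reaches it, and close with the triangle-inequality dichotomy. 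This is complete and correct, and it is simpler than Nekhoroshev's proof because the latter must additionally control the deviation of the curve from the affine subspace, a difficulty which (i) removes. Two further observations you make are also accurate: only the bound $d<\delta$ among the constraints in (iv) is actually consumed here (the others serve the surrounding Nekhoroshev scheme), and in fact your dichotomy could be split at $C/2$ rather than $C/5$, improving the constant to $(C/2)(d/2)^{p_l}$; the factor $1/5$ in the paper's statement simply mirrors the constant produced by Nekhoroshev's more general lemma.
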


This is a special case of the lemma on ``almost plane curves" of Nekhoroshev, stated in \cite{Nek77} and proved in \cite{Nek79} (our case corresponds to ``plane curves").

Now assume that Alternative $(2)$ of Proposition~\ref{norm} holds true, and let $\gamma_j(t):=I(z_j^+)+\Pi_{j+1}(I(z_j^+(t))-I(z_j^+))$ for $t \in [0,t_j^+]$. Since this curve takes values in a proper affine subspace, the following proposition is a simple consequence of Proposition \ref{norm} and Lemma~\ref{lemmenekho}.

\begin{proposition}\label{steep}
For $0 \leq j \leq n-1$, let $H_j \in \widetilde{NF}_j(\omega_0,\dots,\omega_j,z_j,s_j,r_j,\xi_j,F)$ and $\Phi_j :  \mathcal{V}_{2s_j,2\xi_j}(z_j) \rightarrow \mathcal{V}_{3s_j,3\xi_j}(z_j)$ given by Proposition~\ref{normal}, and let $z_j^+(t)$ be the forward solution of the Hamiltonian $H_j^+=H_j \circ \Phi_j$ starting at $z_j^+=\Phi_j^{-1}(z_j)$. Assume that $h$ is $(r,\kappa,C,\delta,(p_l)_{l=1,\ldots,n-1})$-steep. Then we have the following dichotomy for $j \leq n-2$: 
\begin{itemize}
\item[$(1)$] either $z_j^+(t) \in V_{s_j,\xi_j}(z_j)$ for $0 \leq t \leq \bar{t}_j$,
\item[$(2)$] or there exists a time $\tilde{t}_j \leq t_j^+ < \bar{t}_j$ such that, setting $\gamma_j(\tilde{t}_j):=I(z_j^+)+\Pi_{j+1}(I(z_j^+(\tilde{t}_j))-I(z_j^+))$, then
\begin{equation}\label{munu}
||\Pi_{j+1} \nabla h(\gamma_j(\tilde{t}_j))||> \mu_j s_j^{p_{n-j-1}}, \quad \mu_j:=5^{-1}C16^{-p_{n-j-1}},
\end{equation}
provided that
\begin{equation}\label{thr2}
\begin{cases}
\varepsilon < s_j^2/8, \\ 
s_j < 8\min\{\delta,(3F)^{-1}\kappa,2(5\kappa(4C)^{-1})^{1/p_{n-j-1}}\}.
\end{cases}
\end{equation}
\end{itemize}
If $j=n-1$, then $z_{n-1}^+(t) \in V_{s_{n-1},\xi_{n-1}}(z_{n-1})$, for $0 \leq t \leq \bar{t}_{n-1}$.

\end{proposition}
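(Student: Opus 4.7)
The plan is to combine Proposition~\ref{norm} with Nekhoroshev's Lemma~\ref{lemmenekho}, applied to the affine curve traced by the orthogonal projection of the action onto $\Lambda_{j+1}$. The case $j=n-1$ is already the second conclusion of Proposition~\ref{norm}, so I focus on $0\le j\le n-2$. Applying Proposition~\ref{norm} to $H_j$: if its alternative~(1) holds, the corresponding alternative~(1) of the statement holds and we are done; otherwise alternative~(2) produces a time $t_j^+<\bar t_j$ satisfying~\eqref{bazar}, and I work with this situation.

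The natural object is the curve
\[
\gamma_j(t):=I(z_j^+)+\Pi_{j+1}\bigl(I(z_j^+(t))-I(z_j^+)\bigr),\qquad t\in[0,t_j^+],
\]
which starts at $\gamma_j(0)=I(z_j^+)$ and stays in the affine subspace $\lambda:=I(z_j^+)+\Lambda_{j+1}$ of dimension $l=n-j-1\in[1,n-1]$. The plan is to apply Lemma~\ref{lemmenekho} to a suitable restriction $\gamma_j|_{[0,t^+]}$ with $l=n-j-1$, $p_l=p_{n-j-1}$, and $d:=s_j/8$. The choice $d=s_j/8$ is dictated by the factor $16^{-p_{n-j-1}}$ in the definition of $\mu_j$, since $(d/2)^{p_{n-j-1}}=16^{-p_{n-j-1}}s_j^{p_{n-j-1}}$.

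The key step is verifying $\|\gamma_j(t_j^+)-\gamma_j(0)\|>d$, so that $t^+\le t_j^+$ can be defined as the first time the curve reaches Euclidean distance $d$ from its starting point. Writing
\[
\gamma_j(t)-\gamma_j(0)=\Pi_{j+1}\bigl(I(z_j^+(t))-I(z_j)\bigr)-\Pi_{j+1}\bigl(I(z_j^+)-I(z_j)\bigr),
\]
and applying the Pythagorean identity at $t=t_j^+$ with $\|I(z_j^+(t_j^+))-I(z_j)\|\ge s_j/4$ (from the sup-norm equality in~\eqref{bazar}), the bound on $\|\Pi_{j+1}^\perp(I(z_j^+(t_j^+))-I(z_j))\|$ controlled by $s_j^{-1}\varepsilon$ via~\eqref{bazar}, the first inequality of~\eqref{thr2}, and the confinement $\|I(z_j^+)-I(z_j)\|\le s_j/108$ from~\eqref{confin}, one produces $\|\gamma_j(t_j^+)-\gamma_j(0)\|>d=s_j/8$. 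The remaining hypotheses of Lemma~\ref{lemmenekho} are routine: (i) is by construction, (ii) by definition of $t^+$, (iii) follows from $z_j^+\in V_{s_j,\xi_j}(z_j)\subseteq\mathcal V_{3s_j,3\xi_j}(z_j)$ together with $I(\mathcal V_{3s_j,3\xi_j}(z_j))\subseteq\mathcal D_r$, and (iv) is precisely the second inequality of~\eqref{thr2} read with $d=s_j/8$. Lemma~\ref{lemmenekho} then yields $\tilde t_j\in[0,t^+]\subseteq[0,t_j^+]$ with
\[
\|\Pi_{j+1}\nabla h(\gamma_j(\tilde t_j))\|>\frac{C}{5}\left(\frac{d}{2}\right)^{p_{n-j-1}}=\frac{C}{5}\cdot 16^{-p_{n-j-1}}s_j^{p_{n-j-1}}=\mu_j s_j^{p_{n-j-1}},
\]
which is exactly~\eqref{munu}.

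The main technical obstacle will be the lower bound $\|\gamma_j(t_j^+)-\gamma_j(0)\|>d$: it requires bridging the sup-norm estimates produced by Proposition~\ref{norm} and the Euclidean norm demanded by Lemma~\ref{lemmenekho}, and carefully tracking the $O(s_j/108)$ correction arising from $z_j^+=\Phi_j^{-1}(z_j)\neq z_j$. Beyond that no new ingredient is needed: the statement is essentially a concatenation of Proposition~\ref{norm} and Lemma~\ref{lemmenekho}, with the calibration $d=s_j/8$ producing the clean constant $\mu_j$.
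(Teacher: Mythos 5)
Your overall plan is essentially the paper's proof: reduce to alternative~$(2)$ of Proposition~\ref{norm}, form the projected curve $\gamma_j$, and feed it into Lemma~\ref{lemmenekho} with $d=s_j/8$, so that $(C/5)(d/2)^{p_{n-j-1}}=\mu_j s_j^{p_{n-j-1}}$. You also correctly flag the correction $\Pi_{j+1}\bigl(I(z_j^+)-I(z_j)\bigr)$ coming from $z_j^+\ne z_j$, which the paper's text quietly drops when it writes $\gamma_j(\tilde t_j^+)-\gamma_j(0)=\Pi_{j+1}(I(z_j^+(\tilde t_j^+))-I(z_j))$; noticing it is healthy even though it is only $O(s_j)$.

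The step that would fail as you wrote it is the derivation of the lower bound $\|\gamma_j(t_j^+)-\gamma_j(0)\|>d$. The estimates in~\eqref{bazar} and~\eqref{confin} are in the sup norm $|\cdot|$, while Lemma~\ref{lemmenekho} is stated in the Euclidean norm $\|\cdot\|$, and your proposal converts in the lossy direction. Passing from $|\Pi_{j+1}^\perp(\cdot)|\le s_j^{-1}\varepsilon$ to a Euclidean bound costs a factor $\sqrt n$, so you only get $\|\Pi_{j+1}^\perp(\cdot)\|\le\sqrt n\, s_j^{-1}\varepsilon<\sqrt n\, s_j/8$; the Pythagorean identity $\|\Pi_{j+1}(v)\|^2=\|v\|^2-\|\Pi_{j+1}^\perp(v)\|^2$ with $\|v\|\ge s_j/4$ then yields $(s_j/4)^2-(\sqrt n\, s_j/8)^2$, which is nonpositive once $n\ge 4$, so no positive lower bound comes out. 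Similarly $\|I(z_j^+)-I(z_j)\|$ is bounded by $\sqrt n\, s_j/108$, not $s_j/108$. The paper sidesteps all of this by doing the subtraction in the sup norm, where the triangle inequality has no dimensional loss: $|\Pi_{j+1}(v)|\ge|v|-|\Pi_{j+1}^\perp(v)|\ge s_j/4-s_j^{-1}\varepsilon\ge s_j/8$, and only afterwards passes to the Euclidean norm by the one-sided inequality $\|\Pi_{j+1}(v)\|\ge|\Pi_{j+1}(v)|$, which goes the right way. If you replace the Euclidean Pythagorean step by this sup-norm triangle inequality followed by $\|\cdot\|\ge|\cdot|$, your argument matches the paper's and goes through.
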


\begin{proof} We only have to consider the case $j\leq n-2$, and we have to prove that Alternative $(2)$ of Proposition~\ref{norm} implies Alternative $(2)$  of the above proposition. So we assume the existence of a positive time $t_j^+ < \bar{t}_j$ such that
\[ |I(z_j^+(t_j^+))-I(z_j)|=s_j/4 \]
and, for $0 \leq t \leq t_j^{+}$,
\[ z_j^+(t) \in V_{s_j,\xi_j}(z_j), \quad |I(z_j^+(t))-I(z_j)| \leq s_j/4, \quad |\Pi_{j+1}^\perp(I(z_j^+(t))-I(z_j))| \leq s_j^{-1}\varepsilon. \] 
Hence, using the first inequality of~\eqref{thr2}, 
\[ |\Pi_{j+1}(I(z_j^+(t_j^+))-I(z_j))| \geq |I(z_j^+(t_j^+))-I(z_j)|-|\Pi_{j+1}^\perp(I(z_j^+(t))-I(z_j)| \geq s_j/4 - s_j^{-1}\varepsilon \geq s_j/8\]
and in particular
\[ ||\Pi_{j+1}(I(z_j^+(t_j^+))-I(z_j))|| \geq s_j/8. \]
Therefore we can certainly find a positive time $\tilde{t}_j^+ \leq t_j^+$ such that
\[ ||\Pi_{j+1}(I(z_j^+(\tilde{t}_j^+))-I(z_j))|| = s_j/8 \]
and
\begin{equation}\label{estoubli}
||\Pi_{j+1}(I(z_j^+(t))-I(z_j))|| \leq s_j/8, \quad 0 \leq t \leq \tilde{t}_j^+. 
\end{equation}
Now we want to apply Lemma~\ref{lemmenekho} to the curve $\gamma_j(t)=I(z_j^+)+\Pi_{j+1}(I(z_j^+(t))-I(z_j^+))$, for $t \in [0,\tilde{t}_j^+]$, with $d:=s_j/8$ and with the affine subspace $\lambda_{j+1}:=I(z_j^+)+\Lambda_{j+1}$ which has dimension $n-j-1$. The assumptions $(i)$ and $(ii)$ of Lemma~\eqref{lemmenekho} are trivially satisfied, as
\[ \gamma_j(\tilde{t}_j^+)-\gamma_j(0)=\Pi_{j+1}(I(z_j^+(\tilde{t}_j^+)-I(z_j)). \] 
Then $(iii)$ holds true since, by definition of $H_j$, we have $I(V_{2s_j,2\xi_j}(z_j)) \subseteq I(V_{3s_j,3\xi_j}(z_j)) \subseteq D_r$. Eventually, the second inequality of~\eqref{thr2} clearly implies $(iv)$ therefore Lemma~\ref{lemmenekho} can be applied, and there exists a time $\tilde{t}_j \in [0,\tilde{t}_j^+]$ such that 
\[ ||\Pi_{j+1}\nabla h(\gamma_j(\tilde{t}_j))||>5^{-1}C(d/2)^{p_{n-j-1}}=\mu_j s_j^{p_{n-j-1}}.  \]
This concludes the proof.  
\end{proof}

\subsection{Use of periodic approximations}\label{s54}

Let us first state the following simple consequence of Dirichlet's theorem on approximation of real vectors by rational vectors.

\begin{lemma}\label{dirichlet}
Let $v\in \R^n\setminus\{0\}$, and $Q \geq 1$ a real number. Then there exists a $T$-periodic vector $\omega \in \R^n\setminus\{0\}$ such that
\[ ||v-\omega||\leq \sqrt{n-1}(TQ)^{-1}, \quad ||v||^{-1} \leq T \leq \sqrt{n} ||v||^{-1}Q^{n-1}.  \]
\end{lemma}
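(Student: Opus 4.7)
The idea is to reduce this to the classical Dirichlet simultaneous approximation theorem in dimension $n-1$, by using the largest coordinate of $v$ as a ``denominator'' and approximating the remaining $n-1$ ratios. I would first reorder the coordinates and, if necessary, flip the sign of one axis (neither operation affects the Euclidean norm nor changes the conclusion), so that $v_n = \|v\|_\infty > 0$. Then I would apply Dirichlet's theorem to the $(n-1)$-tuple $\alpha_i := v_i/v_n$, $1 \leq i \leq n-1$, with parameter $Q$: there exist an integer $q$ with $1 \leq q \leq Q^{n-1}$ and integers $p_1,\dots,p_{n-1}\in\Z$ such that $|q\alpha_i - p_i| \leq 1/Q$ for each $i$.

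Next I would define
$$\omega := \bigl(p_1 v_n/q,\,\dots,\, p_{n-1} v_n/q,\, v_n\bigr), \qquad T_0 := q/v_n,$$
so that $T_0\,\omega = (p_1,\dots,p_{n-1},q) \in \Z^n\setminus\{0\}$; hence $\omega$ is a nonzero periodic vector, and its least period $T$ divides $T_0$. Since the last coordinate of $v-\omega$ vanishes and, for $i<n$,
$$|v_i - \omega_i| \,=\, \frac{v_n}{q}\,|q\alpha_i - p_i| \,\leq\, \frac{v_n}{qQ} \,=\, \frac{1}{T_0 Q} \,\leq\, \frac{1}{TQ},$$
Pythagoras over the $n-1$ nontrivial coordinates immediately yields $\|v - \omega\| \leq \sqrt{n-1}/(TQ)$.

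For the period bounds, the last coordinate of $T\omega$ equals $T v_n$ and is a positive integer, so $T v_n \geq 1$, giving $T \geq 1/\|v\|_\infty \geq 1/\|v\|$. The upper bound follows from
$$T \,\leq\, T_0 \,=\, \frac{q}{v_n} \,\leq\, \frac{Q^{n-1}}{\|v\|_\infty} \,\leq\, \frac{\sqrt{n}\,Q^{n-1}}{\|v\|},$$
using the norm comparison $\|v\|_\infty \geq \|v\|/\sqrt{n}$.

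The argument is essentially a direct application of a classical theorem and there is no substantive obstacle. The one minor subtlety is that the integer vector $T_0\omega$ produced by Dirichlet need not be primitive, so the true least period $T$ may be strictly less than $T_0$; however, passing to $T \leq T_0$ only strengthens the approximation inequality and preserves both period bounds as shown above.
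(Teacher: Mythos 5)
Your proof is correct and takes essentially the same route as the paper's: reorder (and fix the sign) so the largest coordinate plays the role of denominator, apply Dirichlet's simultaneous approximation theorem to the $n-1$ ratios, and define $\omega$ by replacing the small components with rational multiples of the distinguished one. You are slightly more careful than the paper in distinguishing $T_0 = q/v_n$ (for which $T_0\omega\in\Z^n$) from the true least period $T\leq T_0$ and in checking that passing to $T$ preserves all three estimates, but the argument and the key computation are the same.
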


\begin{proof}
Fix $Q \geq 1$. Up to a re-ordering of its component, we can write $v=|v|(\pm 1, x)$ for some $x \in \R^{n-1}$ and by Dirichlet's approximation theorem, there exists a rational vector $p/q \in \Q^{n-1}$, such that
\[ |qx-p| \leq Q^{-1}, \quad 1 \leq q \leq Q^{n-1}. \]
The vector $\omega=|v|(\pm 1, p/q) \in \R^n$ is then $T$-periodic, for $T=|v|^{-1}q$, and we have
\[ ||v-\omega||\leq T^{-1}||qx-p||, \quad |v|^{-1} \leq T \leq |v|^{-1}Q^{n-1}  \]
which implies
\[ ||v-\omega||\leq  \sqrt{n-1}(TQ)^{-1}, \quad ||v||^{-1} \leq T \leq \sqrt{n}||v||^{-1}Q^{n-1} \]
and this was the statement to prove.
\end{proof}

Now assume that the conclusion of Alternative 2 of Proposition~\ref{steep} holds true, so the vector $\Pi_{j+1} \nabla h(\gamma_j(\tilde{t}_j))$ is non-zero, where $\gamma_j(\tilde{t}_j)=I(z_j^+)+\Pi_{j+1}(I(z_j^+(\tilde{t}_j))-I(z_j^+))$. By Lemma~\ref{dirichlet} this non-zero vector can be approximated by a periodic vector $\omega_{j+1}$, and it will be easy to ensure that this new periodic vector is linearly independent from $\omega_0, \dots, \omega_j$ (as $\omega_{j+1}$ is close to $\Pi_{j+1} \nabla h(\gamma_j(\tilde{t}_j))$, the latter being, obviously, linearly independent from $\omega_0, \dots, \omega_j$ as it is orthogonal to them). Moreover, as $\gamma_j(\tilde{t}_j)$ is close to $I(z^+_j(\tilde{t}_j))$, setting $z_{j+1}:=z^+_j(\tilde{t}_j)$, the vector $\omega_{j+1}$ is also an approximation of $\Pi_{j+1} \nabla h(I(z_{j+1}))$. This leads to the following proposition.

\begin{proposition}\label{periodic}
For $0 \leq j \leq n-1$, let $H_j \in \widetilde{NF}_j(\omega_0,\dots,\omega_j,z_j,s_j,r_j,\xi_j,F,\epsilon,m)$ and $\Phi_j :  \mathcal{V}_{2s_j,2\xi_j}(z_j) \rightarrow \mathcal{V}_{3s_j,3\xi_j}(z_j)$ given by Proposition~\ref{normal}, and let $z_j^+(t)$ be the forward solution of the Hamiltonian $H_j^+=H_j \circ \Phi_j$ starting at $z_j^+=\Phi_j^{-1}(z_j)$. Assume that $h$ is $(r,\kappa,C,\delta,(p_l)_{l=1,\ldots,n-1})$-steep. Then, for $j\leq n-2$, we have the following dichotomy: 
\begin{itemize}
\item[$(1)$] either $z_j^+(t) \in V_{s_j,\xi_j}(z_j)$ for $0 \leq t \leq \bar{t}_j$,
\item[$(2)$] or, given some positive parameter $Q \geq 1$, there exists a $T_{j+1}$-periodic vector $\omega_{j+1}\in \R^n\setminus\{0\}$, linearly independent from $\omega_0, \dots, \omega_j$, with the estimate
\begin{equation} (F'\sqrt{n} s_j)^{-1}< T_{j+1} < \sqrt{n} \mu_j^{-1} s_j^{-p_{n-j-1}} Q^{n-1}, \quad F':=\max\{1,F\}, \label{Tjplus1} \end{equation}
and a time $\tilde{t}_j\leq \leq \bar{t}_j$ such that if we define
\[ z_{j+1}:=z_j^+(\tilde{t}_j), \quad  s_{j+1}:=2\sqrt{n-1}(T_{j+1}Q)^{-1}, \quad r_{j+1}:=r_j+\xi_j, \quad \xi_{j+1}:=\xi_j/3 \] 
then it holds that 
\begin{equation}\label{incl}
\mathcal{V}_{3s_{j+1},3\xi_{j+1}}(z_{j+1}) \subseteq  \mathcal{V}_{2s_{j},2\xi_{j}}(z_{j})
\end{equation}
\begin{equation}\label{Zj}
||\Pi_{j+1}\nabla h(I(z_{j+1}))-\omega_{j+1}|| \leq s_{j+1}, 
\end{equation} 
\begin{equation}\label{apart}
z_j^+(t) \in V_{s_j,\xi_j}(z_j), \quad 0 \leq t \leq \tilde{t}_j, 
\end{equation}
provided that
\begin{equation}\label{thr3}
\begin{cases}
\varepsilon < s_j^2/8, \\ 
s_j < 8\min\{\delta,(3F)^{-1}\kappa,2(5\kappa(4C)^{-1})^{1/p_{n-j-1}}\}, \\
Q \geq 8F'\sqrt{n(n-1)}, \\
\varepsilon \leq (2\sqrt{n}F)^{-1}s_j s_{j+1}.
\end{cases}
\end{equation}
\end{itemize}
If $j=n-1$, then $z_{n-1}^+(t) \in V_{s_{n-1},\xi_{n-1}}(z_{n-1})$, for $0 \leq t \leq \bar{t}_{n-1}$.
\end{proposition}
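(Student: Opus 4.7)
The plan is as follows. First I would invoke Proposition~\ref{steep}: in the case $j=n-1$, or in Alternative~(1) for $j\le n-2$, the conclusion of the present proposition is already contained in that of Proposition~\ref{steep}, so nothing more is needed. Assume therefore that Alternative~(2) of Proposition~\ref{steep} holds, producing a time $\tilde{t}_j\le t_j^+<\bar t_j$ and the point $\gamma_j(\tilde{t}_j)=I(z_j^+)+\Pi_{j+1}(I(z_j^+(\tilde{t}_j))-I(z_j^+))$ at which $v:=\Pi_{j+1}\nabla h(\gamma_j(\tilde{t}_j))$ satisfies $\|v\|>\mu_j s_j^{p_{n-j-1}}$. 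Apply Dirichlet's Lemma~\ref{dirichlet} to $v$ with parameter $Q$ to obtain the $T_{j+1}$-periodic vector $\omega_{j+1}$ with $\|\omega_{j+1}-v\|\le\sqrt{n-1}(T_{j+1}Q)^{-1}=s_{j+1}/2$, and set $z_{j+1}:=z_j^+(\tilde{t}_j)$ together with the prescribed $r_{j+1},\xi_{j+1}$. Observe that~\eqref{apart} is then just the restriction of~\eqref{bazar} to $[0,\tilde{t}_j]\subseteq[0,t_j^+]$.

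For the bounds~\eqref{Tjplus1}, the upper bound on $T_{j+1}$ is immediate from Lemma~\ref{dirichlet} and the steepness estimate $\|v\|>\mu_j s_j^{p_{n-j-1}}$. The lower bound requires an upper estimate $\|v\|\le F'\sqrt{n}\,s_j$, which I would obtain from the key observation that $\omega_j\perp\Lambda_{j+1}$, and thus $\Pi_{j+1}\omega_j=0$. Together with~\eqref{zj} this gives $\|\Pi_{j+1}\nabla h(I(z_j))\|\le s_j$; a first-order expansion using $\|\nabla^2 h\|_r\le F$ and the crude bound $\|\gamma_j(\tilde{t}_j)-I(z_j)\|\lesssim s_j$ (derived from~\eqref{confin} and $|I(z_j^+(t))-I(z_j)|\le s_j/4$ in~\eqref{bazar}) then yields $\|v\|\le F'\sqrt{n}\,s_j$, whence $T_{j+1}\ge\|v\|^{-1}\ge(F'\sqrt{n}\,s_j)^{-1}$.

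The third condition $Q\ge 8F'\sqrt{n(n-1)}$ in~\eqref{thr3} ensures $s_{j+1}/2<\|v\|$, so that $\omega_{j+1}$ cannot belong to $\mathrm{span}(\omega_0,\dots,\omega_j)$: otherwise $\Pi_{j+1}\omega_{j+1}=0$ would give $\|v\|=\|\Pi_{j+1}(v-\omega_{j+1})\|\le s_{j+1}/2$, a contradiction. The estimate~\eqref{Zj} follows from the triangle inequality
\[ \|\Pi_{j+1}\nabla h(I(z_{j+1}))-\omega_{j+1}\|\le\|\Pi_{j+1}(\nabla h(I(z_{j+1}))-\nabla h(\gamma_j(\tilde{t}_j)))\|+\|v-\omega_{j+1}\|. \]
The second summand is $s_{j+1}/2$; for the first, $I(z_{j+1})-\gamma_j(\tilde{t}_j)=\Pi_{j+1}^\perp(I(z_j^+(\tilde{t}_j))-I(z_j^+))$ has norm $\lesssim\sqrt{n}\,s_j^{-1}\varepsilon$ by~\eqref{bazar} (up to the shift $I(z_j^+)\leftrightarrow I(z_j)$ controlled by~\eqref{confin}), so the first summand is $\lesssim F\sqrt{n}\,s_j^{-1}\varepsilon$, which is $\le s_{j+1}/2$ precisely by the fourth condition in~\eqref{thr3}.

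Finally, the inclusion~\eqref{incl} is routine bookkeeping from $r_{j+1}=r_j+\xi_j$ and $\xi_{j+1}=\xi_j/3$: the radial bound gives $r_{j+1}+3\xi_{j+1}=r_j+2\xi_j$, the action bound $|I(z)-I(z_j)|\le 3s_{j+1}+s_j/4<2s_j$ follows from $s_{j+1}$ being much smaller than $s_j$ (via the lower bound on $T_{j+1}$), and the projection constraint $I(z)-I(z_j)\in\tilde{\Lambda}_j$ reduces, via $\tilde{\Lambda}_{j+1}\subseteq\tilde{\Lambda}_j$, to the fact that the $H_j^+$-flow preserves $\tilde{\Lambda}_j$ up to an exponentially small drift from $X_{f_j^+}$ absorbed by the $3s_{j+1}$ slack. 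I expect the main obstacle to be neither any single step nor the invocation of Dirichlet's lemma, but rather the careful coordination of error terms — ensuring simultaneously the upper bound $\|v\|\le F'\sqrt{n}\,s_j$, the linear independence of $\omega_{j+1}$, and the estimate~\eqref{Zj}, while keeping all multiplicative constants compatible with the four inequalities in~\eqref{thr3}.
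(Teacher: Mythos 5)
Your overall route is the same as the paper's: pass through Proposition~\ref{steep}, apply Dirichlet's Lemma~\ref{dirichlet} to $v=\Pi_{j+1}\nabla h(\gamma_j(\tilde t_j))$, and then check~\eqref{Tjplus1}, linear independence, \eqref{Zj} and~\eqref{incl}. The two-sided bound on $\|v\|$ (lower from steepness, upper from $\Pi_{j+1}\omega_j=0$, \eqref{zj}, $\|\nabla^2 h\|\le F$ and $\|\gamma_j(\tilde t_j)-I(z_j)\|\lesssim s_j$) matches, as do the estimates leading to~\eqref{Zj}. Your linear-independence argument is in fact slightly cleaner than the paper's: you argue directly that if $\omega_{j+1}\in\Lambda_{j+1}^\perp$ then $\|v\|=\|\Pi_{j+1}(v-\omega_{j+1})\|\le s_{j+1}/2<\|v\|$, a contradiction; the paper instead verifies the strict Cauchy--Schwarz inequality $|\omega_{j+1}\cdot v'|<\|\omega_{j+1}\|\,\|v'\|$ for all $v'\in\Lambda_{j+1}^\perp$. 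Both hinge on $Q\ge 8F'\sqrt{n(n-1)}$ and are equally valid.

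There is, however, a genuine flaw in your treatment of the affine-subspace condition in~\eqref{incl}. You say that the constraint $I(z)-I(z_j)\in\tilde\Lambda_j$ ``reduces to the fact that the $H_j^+$-flow preserves $\tilde\Lambda_j$ up to an exponentially small drift from $X_{f_j^+}$ absorbed by the $3s_{j+1}$ slack.'' This cannot work: membership in $\tilde\Lambda_j$ is an exact linear equality, not a quantitative bound, and the quantity $3s_{j+1}$ controls $|I(z)-I(z_{j+1})|$ (a magnitude in the sup norm) rather than any deviation transverse to $\tilde\Lambda_j$; it cannot ``absorb'' an orthogonal drift. The paper's argument is direct and involves no drift at all: since $\tilde t_j\le t_j^+$, estimate~\eqref{bazar} (repackaged as~\eqref{apart}) gives $z_{j+1}=z_j^+(\tilde t_j)\in V_{s_j,\xi_j}(z_j)$, and the very definition of $V_{s_j,\xi_j}(z_j)$ requires $I(z_{j+1})-I(z_j)\in\tilde\Lambda_j$. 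Combined with $I(z)-I(z_{j+1})\in\tilde\Lambda_{j+1}\subseteq\tilde\Lambda_j$ for any $z\in\mathcal V_{3s_{j+1},3\xi_{j+1}}(z_{j+1})$, one gets the exact membership $I(z)-I(z_j)\in\tilde\Lambda_j$ by additivity. You should replace your appeal to approximate flow-invariance and slack with this direct deduction from~\eqref{apart} and the domain definitions.
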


\begin{proof}
Since~\eqref{thr3} implies in particular~\eqref{thr2}, it is enough to prove that Alternative $(2)$ of Proposition~\ref{steep} implies Alternative $(2)$ of the above proposition. So we assume the existence of a time $\tilde{t}_j \leq t_j^+ < \bar{t}_j$ such that 
\[ ||\Pi_{j+1} \nabla h (\gamma_j(\tilde{t}_j))||> \mu_j s_j^{p_{n-j-1}}, \quad \mu_j=5^{-1}C16^{-p_{n-j-1}}, \]
where $\gamma_j(\tilde{t}_j)=I(z_j^+)+\Pi_{j+1} (I(z_j^+(\tilde{t}_j))-I(z_j^+))$. Let us define $v_{j+1}:=\Pi_{j+1} \nabla h (\gamma_j(\tilde{t}_j))$. We have
\[  ||\gamma_j(\tilde{t_j})-I(z_j)|| \leq ||\Pi_{j+1}(I(z_j^+(\tilde{t}_j))-I(z_j^+))||+||I(z_j^+)-I(z_j)|| \leq s_j/8+\sqrt{n} s_j/108, \]
where we used the estimate~\eqref{estoubli} (as $\tilde{t}_j \leq \tilde{t}_j^+$) and the estimate~\eqref{confin}. Since $n \geq 2$ this implies
\[ ||\gamma_j(\tilde{t_j})-I(z_j)|| \leq (\sqrt{n}-1)s_j \] 
which implies 
\[ ||v_{j+1}-\Pi_{j+1} \nabla h (I(z_j))|| \leq F(\sqrt{n}-1)s_j. \]
Now recall that by definition of $z_j$, we have
\[ ||\Pi_{j}\nabla h (I(z_j))-\omega_j|| \leq s_j\] 
and since $\omega_j \in \Lambda_{j+1}^\perp$ and $\Lambda_{j+1} \subseteq \Lambda_j$, $\Pi_{j+1}\omega_j=0$ and $\Pi_{j+1}=\Pi_{j+1}\Pi_{j}$, and therefore 
\[ ||\Pi_{j+1}\nabla h (I(z_j))||=||\Pi_{j+1}(\Pi_{j}(\nabla h (I(z_j)))-\omega_j)|| \leq s_j \] 
which implies that
\[ ||v_{j+1}|| \leq ||v_{j+1}-\Pi_{j+1} \nabla h (I(z_j))|| + ||\Pi_{j+1}\nabla h (I(z_j))|| \leq F'\sqrt{n}s_j. \]
We just proved that
\begin{equation}\label{estimev}
\mu_j s_j^{p_{n-j-1}} < ||v_{j+1}|| \leq F'\sqrt{n}s_j.
\end{equation}
Now, for $Q\geq 1$, we apply Lemma~\ref{dirichlet} to $v_{j+1}$: there exists a $T_{j+1}$-periodic vector $\omega_{j+1} \in \R^n\setminus\{0\}$ such that
\[ ||v_{j+1}-\omega_{j+1}||\leq \sqrt{n-1}(T_{j+1}Q)^{-1}, \quad ||v_{j+1}||^{-1} \leq T_{j+1} \leq \sqrt{n} ||v_{j+1}||^{-1}Q^{n-1}.  \]
Using~\eqref{estimev}, this implies
\begin{equation}\label{period}
(F'\sqrt{n} s_j)^{-1}\leq ||v_{j+1}||^{-1} \leq T_{j+1} \leq \sqrt{n} ||v_{j+1}||^{-1}Q^{n-1} < \sqrt{n} \mu_j^{-1} s_j^{-p_{n-j-1}} Q^{n-1}
\end{equation}
and also, using the lower bound on $T_{j+1}$,
\begin{equation}\label{approx}
||v_{j+1}-\omega_{j+1}||\leq \sqrt{n-1}(T_{j+1}Q)^{-1} \leq \sqrt{n-1}||v_{j+1}||Q^{-1}\leq F'\sqrt{n-1}\sqrt{n} s_jQ^{-1}.  
\end{equation}
Let us prove that $\omega_{j+1}$ is linearly independent from $\omega_0, \dots, \omega_{j}$, that is $\omega_{j+1}$ does not belong to $\Lambda_{j+1}^\perp$. To do this, it is enough to prove that if $v$ is an arbitrary vector in $\Lambda_{j+1}^\perp$, then $|\omega_{j+1}\cdot v| < ||\omega_{j+1}||||v||$: indeed, otherwise, letting $v=\omega_{j+1}$, one would get a contradiction. On the one hand, we have
\[ |\omega_{j+1}\cdot v|=|(\omega_{j+1}-v_{j+1})\cdot v|\leq ||v_{j+1}-\omega_{j+1}||||v||\leq \sqrt{n-1}Q^{-1}||v_{j+1}||||v||   \]
where we used the fact $v_{j+1} \in \Lambda_{j+1}$ and~\eqref{approx}, while, on the other hand,
\[ ||\omega_{j+1}||||v|| \geq (||v_{j+1}||-||v_{j+1}-\omega_{j+1}||)||v||\geq (1 -\sqrt{n-1}Q^{-1})||v_{j+1}||||v||> \sqrt{n-1}Q^{-1}||v_{j+1}||||v|| \]
where we used the third inequality of~\eqref{thr3} and~\eqref{approx}. These last two inequalities imply that $|\omega_{j+1}\cdot v| < ||\omega_{j+1}||||v||$ for an arbitrary vector $v \in \Lambda_{j+1}^\perp$, and so $\omega_{j+1}$ is linearly independent from $\omega_0, \dots, \omega_{j}$. 

Next we define
\[ z_{j+1} := z_j^+(\tilde{t}_j), \]
and observe that, since $\tilde{t}_j \leq t_j^+$, by~\eqref{bazar}, $z_{j+1} \in V_{s_j,\xi_j}(z_j)$, but also $z_j^+(t) \in V_{s_j,\xi_j}(z_j)$ for $0 \leq t \leq \tilde{t}_j$, which justifies~\eqref{apart}. Moreover, still from~\eqref{bazar},
\[ |I(z_{j+1})-\gamma_j(\tilde{t}_j)|=|\Pi_{j+1}^\perp (I(z_{j+1})-I(z_{j}^+))| \leq s_j^{-1}\varepsilon \]
so
\[ ||I(z_{j+1})-\gamma_j(\tilde{t}_j)|| \leq \sqrt{n} s_j^{-1}\varepsilon  \]
and hence
\[ ||\Pi_{j+1} \nabla h (I(z_{j+1}))-v_{j+1}|| \leq F \sqrt{n} s_j^{-1}\varepsilon. \]
Therefore, using the first inequality of~\eqref{approx}, the definition of $s_{j+1}$ and the last inequality of~\eqref{thr3}, 
\begin{eqnarray*}
||\Pi_{j+1} \nabla h (I(z_{j+1}))-\omega_{j+1}|| & \leq & ||\Pi_{j+1} \nabla h (I(z_{j+1}))-v_{j+1}||+||v_{j+1}-\omega_{j+1}|| \\
 & \leq & F\sqrt{n} s_j^{-1}\varepsilon +s_{j+1}/2 \leq s_{j+1},
\end{eqnarray*} 
which proves~\eqref{Zj}. It remains to check~\eqref{incl}, so let us fix $z \in \mathcal{V}_{3s_{j+1},3\xi_{j+1}}(z_{j+1})$. First, we have
\[ I(z)-I(z_j)=I(z)-I(z_{j+1})+I(z_{j+1})-I(z_{j})=I(z)-I(z_{j+1})+I(z_j^+(\tilde{t}_j))-I(z_{j}) \in \tilde{\Lambda}_j 
\] 
since $I(z)-I(z_{j+1}) \in \tilde{\Lambda}_{j+1} \subseteq \tilde{\Lambda}_j$ and $I(z_j^+(\tilde{t}_j))-I(z_{j}) \in \tilde{\Lambda}_j$. Then,
\[ |I(z)-I(z_j)| \leq |I(z)-I(z_{j+1})|+|I(z_{j+1})-I(z_j)| < 3s_{j+1}+s_j/4 < 2 s_j \]
provided that $s_{j+1}\leq 7s_j/12$: but this inequality (in fact, the stronger inequality $s_{j+1} \leq s_j/2$) follows from the definition of $s_{j+1}$, the third inequality of~\eqref{thr3} and~\eqref{approx}. Eventually,
\[ ||z|| < r_{j+1}+3\xi_{j+1}=r_j+\xi_j+3\xi_{j+1}=r_j+2\xi_j  \]
and so we showed that $\mathcal{V}_{3s_{j+1},3\xi_{j+1}}(z_{j+1}) \subseteq \mathcal{V}_{2s_j,2\xi_j}(z_j)$, which concludes the proof.
\end{proof}

\subsection{One step of the algorithm}\label{s55}

As a straightforward application of Proposition~\ref{normal}, Proposition~\ref{norm} and Proposition~\ref{periodic}, we now describe formally one step of the algorithm that will eventually lead to the proof of Theorem~\ref{mainthm03}. 

\begin{proposition}\label{algostep}
For $0 \leq j \leq n-1$, let $H_j \in \widetilde{NF}_j(\omega_0,\dots,\omega_j,z_j,s_j,r_j,\xi_j,F)$ and $\Phi_j :  \mathcal{V}_{2s_j,2\xi_j}(z_j) \rightarrow \mathcal{V}_{3s_j,3\xi_j}(z_j)$ given by Proposition~\ref{normal}, and let $z_j^+(t)$ be the forward solution of the Hamiltonian $H_j^+=H_j \circ \Phi_j$ starting at $z_j^+=\Phi_j^{-1}(z_j)$. Assume that $h$ is $(r,\kappa,C,\delta,(p_l)_{l=1,\ldots,n-1})$-steep. Then, for $0 \leq j \leq n-2$, we have the following dichotomy: 
\begin{itemize}
\item[$(1)$] either $z_j^+(t) \in V_{s_j,\xi_j}(z_j)$ for $0 \leq t \leq \bar{t}_j$,
\item[$(2)$] or, given a real number $Q\geq 1$, there exists a $T_{j+1}$-periodic vector $\omega_{j+1}\in \R^n\setminus\{0\}$, linearly independent from $\omega_0, \dots, \omega_j$, with the estimate
\begin{equation}\label{borneperiode}
(F'\sqrt{n} s_j)^{-1}< T_{j+1} < \sqrt{n} \mu_j^{-1} s_j^{-p_{n-j-1}} Q^{n-1}, \quad F'=\max\{1,F\}, 
\end{equation}
and there exists a time $\tilde{t}_j  < \bar{t}_j$ such that, 
\[ z_{j+1}=z_j^+(\tilde{t}_j) \in B_{r_{j+1}}, \quad s_{j+1}=2\sqrt{n-1}(T_{j+1}Q)^{-1}, \quad r_{j+1}=r_j+\xi_j, \quad \xi_{j+1}=\xi_j/3 \]  
such that $H_j^+ \in \widetilde{NF}_{j+1}(\omega_0,\dots,\omega_{j+1},z_{j+1},s_{j+1},r_{j+1},\xi_{j+1},F,\varepsilon,m)$ and
\begin{equation}\label{apart2}
z_j^+(t) \in V_{s_j,\xi_j}(z_j), \quad 0 \leq t \leq \tilde{t}_j, 
\end{equation}
provided that
\begin{equation}\label{thr4}
\begin{cases}
s_j < 8\min\{\delta,(3F)^{-1}\kappa,2(5\kappa(4C)^{-1})^{1/p_{n-j-1}}\}, \\
Q \geq 8F'\sqrt{n(n-1)}, \\
\varepsilon \leq (2\sqrt{n}F')^{-1}s_j s_{j+1}, \\
2^{j+1}216(r_{j+1}+3\xi_{j+1})mT_{j+1}\varepsilon  \leq s_{j+1}, \\ 
72(3F\sqrt{n}+1)\xi_{j+1}^{-1}(r_{j+1}+3\xi_{j+1})mT_{j+1}s_{j+1}\leq 1.
\end{cases}
\end{equation}
\end{itemize}
If $j=n-1$, then $z_{n-1}^+(t) \in V_{s_{n-1},2\xi_{n-1}}(z_{n-1})$ for $0 \leq t \leq \bar{t}_{n-1}$.
\end{proposition}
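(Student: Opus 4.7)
The plan is to assemble Proposition~\ref{algostep} as a direct corollary of the three preceding results: Proposition~\ref{normal} (which produces the normalizing transformation $\Phi_j$ and certifies $H_j^+ \in NF_{j+1}$ once the next-step domains nest correctly), Proposition~\ref{norm} (which gives the dichotomy between confinement of $z_j^+(t)$ in $V_{s_j,\xi_j}(z_j)$ and an escape through a small drift in the $\Lambda_{j+1}$-direction), Proposition~\ref{steep} (which, using steepness, converts the drift into a quantitative lower bound on $\|\Pi_{j+1}\nabla h(\gamma_j(\tilde t_j))\|$), and Proposition~\ref{periodic} (which approximates this transverse frequency by a periodic vector $\omega_{j+1}$ linearly independent from $\omega_0,\dots,\omega_j$, with controlled period). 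The case $j=n-1$ and Alternative~(1) for $j\leq n-2$ are verbatim the conclusion of Proposition~\ref{periodic}, so the only real work lies in showing, assuming Alternative~(2) of Proposition~\ref{periodic} holds, that the produced data actually places $H_j^+$ in $\widetilde{NF}_{j+1}$ and not merely in $NF_{j+1}$.

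To apply Proposition~\ref{periodic} I first check that~\eqref{thr4} implies~\eqref{thr3}. The second and third inequalities of~\eqref{thr3} are identical to their counterparts in~\eqref{thr4} (using $F\leq F'$ for the third). The remaining first inequality $\varepsilon < s_j^2/8$ is not directly assumed, but follows from the chain: the lower bound $T_{j+1} > (F'\sqrt{n}\, s_j)^{-1}$ in~\eqref{borneperiode} combined with $s_{j+1}=2\sqrt{n-1}(T_{j+1}Q)^{-1}$ and the third line of~\eqref{thr4}, namely $Q\geq 8F'\sqrt{n(n-1)}$, yields $s_{j+1}\leq s_j/4$; plugging this into the bound $\varepsilon\leq (2\sqrt n F')^{-1}s_j s_{j+1}$ of~\eqref{thr4}, and using $F'\geq 1$ and $n\geq 2$, gives $\varepsilon < s_j^2/8$ as required.

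Proposition~\ref{periodic} then delivers the periodic vector $\omega_{j+1}$, the time $\tilde t_j<\bar t_j$, the parameters $s_{j+1},r_{j+1},\xi_{j+1}$, the domain inclusion~\eqref{incl}, and the estimates~\eqref{Zj} and~\eqref{apart}. The last clause of Proposition~\ref{normal}, combined with~\eqref{incl}, yields $H_j^+\in NF_{j+1}(\omega_{-1},\dots,\omega_j,z_{j+1},s_{j+1},r_{j+1},\xi_{j+1},F,\varepsilon,m)$. Upgrading this to $\widetilde{NF}_{j+1}$ amounts to checking three things: the inequality~\eqref{zj} for index $j+1$, which is exactly~\eqref{Zj}; the second and third inequalities of~\eqref{thr1} for index $j+1$, which are verbatim the fourth and fifth lines of~\eqref{thr4}; and the remaining first inequality $s_{j+1}\leq (r_{j+1}+2\xi_{j+1})\xi_{j+1}$, which is a short arithmetic verification using $s_{j+1}\leq s_j/4$, the hypothesis $s_j\leq (r_j+2\xi_j)\xi_j$ inherited from $H_j\in\widetilde{NF}_j$, and the definitions $r_{j+1}=r_j+\xi_j$, $\xi_{j+1}=\xi_j/3$.

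Finally, the condition $z_{j+1}\in B_{r_{j+1}}$ follows from~\eqref{apart} at $t=\tilde t_j$: since $z_j^+(\tilde t_j)\in V_{s_j,\xi_j}(z_j)$ and the solution is real, $\|z_{j+1}\|<r_j+\xi_j=r_{j+1}$. There is no genuine obstacle in this proof; it is essentially a careful tracking of the parameter chain. The subtle point to watch is to keep the thresholds in~\eqref{thr1}, \eqref{thr3} and~\eqref{thr4} compatible under the update $j\mapsto j+1$, which is what the specific choices $r_{j+1}=r_j+\xi_j$, $\xi_{j+1}=\xi_j/3$ and $s_{j+1}=2\sqrt{n-1}(T_{j+1}Q)^{-1}$ are designed to accommodate.
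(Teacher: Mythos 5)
Your proposal is correct and follows essentially the same route as the paper's own proof: deduce~\eqref{thr3} and~\eqref{thr1} (with index $j+1$) from~\eqref{thr4} by first establishing $s_{j+1}\leq s_j/4$, then invoke Proposition~\ref{periodic} together with the second part of Proposition~\ref{normal} to place $H_j^+$ in $NF_{j+1}$, and finally upgrade to $\widetilde{NF}_{j+1}$ using~\eqref{Zj} and the verified thresholds. The one point the paper states differently is that the $j=n-1$ case is cited directly from Proposition~\ref{norm} rather than Proposition~\ref{periodic}, but this is only a choice of reference since the latter inherits the claim from the former.
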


\begin{proof}
The case $j=n-1$ follows directly from the case $j=n-1$ of Proposition~\ref{norm}. Then, we claim that the inequalities~\eqref{thr4} imply the inequalities~\eqref{thr3} and the inequalities~\eqref{thr1} (with $j$ replaced by $j+1$). Assuming this claim, and using Proposition~\ref{periodic}, we have the inclusion of the complex domains of \eqref{incl}, and therefore using the second part of the statement of Proposition~\ref{normal}, we can assert that $H_j^+ \in NF_{j+1}$. Moreover, in view of~\eqref{Zj},and since~\eqref{thr1} is satisfied (with $j$ replaced by $j+1$), we eventually obtain that $H_j^+ \in \widetilde{NF}_{j+1}$, while~\eqref{apart2} is nothing but~\eqref{apart}.

It remains to prove the claim. To do this, observe that~\eqref{thr4} obviously implies~\eqref{thr3} and~\eqref{thr1}, except for the following two inequalities:
\begin{equation}\label{r1}
\varepsilon < s_j^2/8, \quad s_{j+1} \leq (r_{j+1}+2\xi_{j+1})\xi_{j+1}.
\end{equation}
But using the third inequality of~\eqref{thr4} and the fact that $H_j \in \widetilde{NF}_j$, we know that
\begin{equation}\label{r2}
\varepsilon \leq (2\sqrt{n}F')^{-1}s_j s_{j+1}, \quad s_j \leq (r_j+2\xi_j)\xi_j.
\end{equation}
Then, using the second inequality of~\eqref{thr4}, one easily check that $s_{j+1} \leq s_j/4$, and this, together with~\eqref{r2}, imply~\eqref{r1}, and the proof is over.  
\end{proof}

\subsection{Proof of Nekhoroshev exponential stability }\label{s56}

We can finally give the proof of Theorem~\ref{mainthm03}. Recall that we are given a Hamiltonian $H$ as in~\eqref{Ham}, which is defined on $\mathcal{B}_r$, and of the form
\[ H(z)=h(I(z))+f(z), \quad h : \mathcal{D}_r \rightarrow \C, \quad f : \mathcal{B}_r \rightarrow \C \]
and that~\eqref{condsteep} holds true, that is
\[   ||\nabla h ||_r \leq E, \quad ||\nabla^2 h ||_r \leq F, \quad ||X_f||_r \leq \varepsilon. \]
We already defined $F'=\max\{1,F\}$. Recall also that $h$ is $(r,\kappa,C,\delta,(p_l)_{l=1,\ldots,n-1})$-steep. Let us now define additional parameters: for any $0 \leq j \leq n-1$ and $0 \leq  k \leq j$, we set
\[ \pi_j^k := \prod_{n-j \leq i \leq n-j+k-1} p_i, \quad a_j^{k}:=\sum_{0 \leq i \leq k} \pi_j^i, \]
with the convention that the product over the empty set is one, that is, $\pi_j^0=1$. Observe in particular that
\[ a_0^0=1, \quad a_1^1=1+p_{n-1}, \]
and at the other extreme,
\[ a_{n-2}^{n-2}=1+p_2+p_2p_3+\cdots+p_2p_3\dots p_{n-1}=a',\]
\[ a_{n-1}^{n-1}=1+p_1+p_1p_2+\cdots+p_1p_2\dots p_{n-1}=a. \]
For $0 \leq j \leq n-2$, recalling that the numbers $\mu_j$ have been defined in~\eqref{munu}, we define 
\[ \eta:= \min_{0 \leq j \leq n-2}\{(3F)^{-1}\kappa,2(5\kappa(4C)^{-1})^{1/p_{n-j-1}}\}, \quad \nu_j:=\prod_{i=0}^{j-1}\mu_i^{\pi_j^{j-1-i}}.\]
The proof of Theorem~\ref{mainthm03} will be a consequence of the following proposition.

\begin{proposition}\label{algo} 
Let $H(z)=h(I(z))+f(z)$ be as in~\eqref{Ham} satisfying~\eqref{condsteep}, such that $h$ is $(r,\kappa,C,\delta,(p_l)_{l=1,\ldots,n-1})$-steep. Let $z_0$ be an arbitrary point in $B_{r/2}$ and $z(t)$ the forward solution of $H$ starting at $z_0$. Given an integer $m \geq 1$ and a real number $Q \geq 1$, we have 
\[ |I(z(t))-I(z_0)| \leq s:=3E\sqrt{n-1} Q^{-1}, \quad 0 \leq t \leq \bar{t}:=3(2rE\sqrt{n-1})^{-1}Q2^m,  \]
provided that:
\begin{equation}\label{cond}
\begin{cases}\tag{C}
Q \geq (5r^2)^{-1}36E\sqrt{n-1}, \\
Q>E\sqrt{n-1}(8\eta)^{-1}, \\
Q>E\sqrt{n-1}(8\delta)^{-1}, \\
Q \geq 8F'\sqrt{n(n-1)}, \\
2\sqrt{n}F'\sqrt{n}^{a+a'}\sqrt{n-1}^{-(a+a')}\nu_{n-1}^{-1}\nu_{n-2}^{-1}\kappa^{-(\pi_{n-1}^{n-1}+\pi_{n-2}^{n-2})} Q^{n(a+a')}\varepsilon \leq 1, \\
2^{n-1}27(3+3^{-n+1})r(n-1)^{-a}n^{a}\sqrt{n-1}\kappa^{-2\pi_{n-1}^{n-1}}\nu_{n-1}^{-2}mQ^{2na-1}\varepsilon \leq 1, \\
Q \geq m216(3^n+1)(3F\sqrt{n}+1)\sqrt{n-1}.
\end{cases}
\end{equation}
\end{proposition}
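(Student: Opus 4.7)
The plan is to iterate Proposition~\ref{algostep} at most $n$ times, using the conditions (C) to validate the hypotheses~\eqref{thr4} at every step.

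\textbf{Initialization.} I set $r_0 := r/2$, $\xi_0 := r/6$, so that $r_0+3\xi_0 = r$ and $\mathcal{V}_{3s_0,3\xi_0}(z_0) \subseteq \mathcal{B}_r$ for small enough $s_0$. The recursion $r_{j+1} = r_j + \xi_j$, $\xi_{j+1} = \xi_j/3$ then gives $r_j + 3\xi_j \leq r$ for all $j$ and, crucially, $\xi_{j+1}^{-1}(r_{j+1}+3\xi_{j+1}) = \tfrac{3}{2}(3^{j+1}+1)$. Since $\|v_0\|:=\|\nabla h(I(z_0))\|\in[\kappa,E]$ is nonzero by steepness, Lemma~\ref{dirichlet} applied to $v_0$ with parameter $Q$ yields a $T_0$-periodic $\omega_0$ satisfying $\|v_0-\omega_0\|\leq \sqrt{n-1}(T_0Q)^{-1}$ and $\|v_0\|^{-1}\leq T_0\leq \sqrt{n}\|v_0\|^{-1}Q^{n-1}$. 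Setting $s_0:=\sqrt{n-1}(T_0Q)^{-1}$ and writing $H = h + g_0 + f_0$ with $g_0 := f$ and $f_0 := 0$, the Hamiltonian $H$ belongs to $\widetilde{NF}_0(\omega_0,z_0,s_0,r_0,\xi_0,F,\varepsilon,m)$: the approximation \eqref{zj} holds by construction, the bound $s_0\leq (r_0+2\xi_0)\xi_0 = 5r^2/36$ is precisely (C1), and the remaining inequalities of~\eqref{thr1} at $j=0$ follow from (C6)-(C7) together with $T_0 s_0 = \sqrt{n-1}/Q$.

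\textbf{Iteration and verification.} At step $j$, either Proposition~\ref{algostep} returns alternative (1) and the algorithm stops, or alternative (2) produces updated data $(\omega_{j+1},z_{j+1},s_{j+1}=2\sqrt{n-1}(T_{j+1}Q)^{-1},r_{j+1}=r_j+\xi_j,\xi_{j+1}=\xi_j/3)$ together with $H_j\circ\Phi_j\in\widetilde{NF}_{j+1}$, and I iterate. The algorithm must stop by $j=n-1$, where only the confining conclusion is available. Two uniform estimates drive the verification of~\eqref{thr4}. First, the lower bound $T_{j+1}\geq(F'\sqrt{n}s_j)^{-1}$ from~\eqref{borneperiode} combined with (C4) forces $s_{j+1}\leq s_j/4$, hence $s_j\leq s_0/4^j$; the uniform clauses $s_j<8\eta$, $s_j<8\delta$, and $s_j\leq (r_j+2\xi_j)\xi_j$ then follow from (C2), (C3), (C1). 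Second, iterating the lower bound $s_{j+1}\geq (2\sqrt{n-1}/\sqrt{n})\mu_j s_j^{p_{n-j-1}}Q^{-n}$ starting from $s_0\geq\sqrt{n-1}\kappa/(\sqrt{n}Q^n)$ yields $s_j\geq \theta_j \nu_j\kappa^{\pi_j^j}Q^{-na_j^j}$ for some explicit constants $\theta_j(n)>0$. Using the identity $T_{j+1}s_{j+1}=2\sqrt{n-1}/Q$ to eliminate $T_{j+1}$, the small-denominator condition $\varepsilon\leq(2\sqrt n F')^{-1}s_js_{j+1}$ is worst at $j=n-2$ and reduces to (C5); the two averaging conditions in~\eqref{thr4}, after substituting $\xi_{j+1}^{-1}(r_{j+1}+3\xi_{j+1})=\tfrac{3}{2}(3^{j+1}+1)$, are again worst at $j=n-2$ and reduce to (C6) and (C7).

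\textbf{Conclusion and main obstacle.} Letting $j^\star\leq n-1$ be the stopping step, I pull the step-$j^\star$ orbit back through the near-identity transformations $\Phi_0,\ldots,\Phi_{j^\star-1}$, each satisfying $\|\Phi_i-\mathrm{Id}\|\leq 2^{i+1}T_i\varepsilon$ by~\eqref{estimdist}. The accumulated action jumps $|I(z_{i+1})-I(z_i)|\leq s_i/4$ sum geometrically to at most $s_0/3 \leq E\sqrt{n-1}/(3Q)$, and the $\Phi_i$-induced perturbations of $I$ are absorbed by (C6); this yields $|I(z(t))-I(z_0)|\leq s = 3E\sqrt{n-1}Q^{-1}$. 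For the time bound, the dominant contribution is $\bar t_0=(r_0+\xi_0)^{-1}s_0^{-1}2^m\geq \tfrac{3}{2r}\cdot \tfrac{Q}{E\sqrt{n-1}}\cdot 2^m = \bar t$; for $j^\star\geq 1$, the cumulative physical time $\sum_{i<j^\star}\tilde t_i+\bar t_{j^\star}$ still exceeds $\bar t$, since each $\tilde t_i$ is of order $s_i/(r\varepsilon)$ (bounding $|dI/dt|\leq 2r\varepsilon$ and using $|I(z_i^+(\tilde t_i))-I(z_i)|=s_i/4$), which is itself much larger than $\bar t$ in the Nekhoroshev regime enforced by (C5). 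The main obstacle is the delicate simultaneous control of $s_j$ from above (by Dirichlet: $s_{j+1}\lesssim s_j/Q$) and from below (by steepness: $\|v_{j+1}\|\gtrsim s_j^{p_{n-j-1}}$) across all $n-1$ iterations: the interplay produces exactly the exponents $n(a+a')$ in (C5) and $2na-1$ in (C6), the accumulated products $\nu_j\kappa^{\pi_j^j}$, while the factor $(3^n+1)$ in (C7) records the $n$-fold geometric shrinking $\xi_{j+1}=\xi_j/3$ of the normal-form domain.
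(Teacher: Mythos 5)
Your initialization and iteration scheme matches the paper's: Dirichlet applied to $\nabla h(I(z_0))$, the choices $r_0=r/2$, $\xi_0=r/6$, $s_0=\sqrt{n-1}(T_0Q)^{-1}$, the recursive update $r_{j+1}=r_j+\xi_j$, $\xi_{j+1}=\xi_j/3$, and the verification that (C) entails the step-$j$ hypotheses $\eqref{thr4}$ via the two-sided control $s_{j+1}\leq s_j/4$ (Dirichlet plus (C4)) and $s_j\geq\theta_j\nu_j\kappa^{\pi_j^j}Q^{-na_j^j}$ (steepness). All of this is the same as the paper's argument.

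The gap is in the conclusion, specifically the time bound. You claim that each $\tilde t_i$ is "of order $s_i/(r\varepsilon)$", citing "$|I(z_i^+(\tilde t_i))-I(z_i)|=s_i/4$". This confuses $\tilde t_i$ with $t_i^+$: in Proposition~\ref{norm} it is $t_i^+$, not $\tilde t_i$, for which $|I(z_i^+(t_i^+))-I(z_i)|=s_i/4$. The time $\tilde t_i\leq t_i^+$ is produced by Nekhoroshev's steepness lemma (Lemma~\ref{lemmenekho}) and can be arbitrarily close to $0$; the lemma only asserts existence of \emph{some} $\tilde t\in[0,t^+]$ where the gradient separates from $\Lambda^\perp$, with no lower bound. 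Moreover, even if the $\tilde t_i$ were of that size, the comparison with $\bar t$ goes the wrong way in the regime of interest: $\bar t$ carries the factor $2^m$, which, when $m\sim Q$ as in the application to Theorem~\ref{mainthm03}, is exponentially large compared to the polynomial-in-$Q$ quantity $s_i/(r\varepsilon)$. So the inequality $\sum_{i<j^\star}\tilde t_i+\bar t_{j^\star}\geq\bar t$ cannot be obtained by lower-bounding the $\tilde t_i$.

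The paper's conclusion needs no lower bound on the $\tilde t_i$ at all. It uses the nested inclusions $V_{3s_{j+1},3\xi_{j+1}}(z_{j+1})\subseteq V_{2s_j,2\xi_j}(z_j)$, the time-shift identity $z_j(t)=z_{j-1}^+(t+\tilde t_{j-1})$ from uniqueness of solutions, the a priori confinement $z_{j-1}^+(t)\in V_{s_{j-1},\xi_{j-1}}(z_{j-1})$ for $0\leq t\leq\tilde t_{j-1}$ from~\eqref{apart2}, and, crucially, the single inequality $\bar t_{j^\star}\geq\bar t$. The last inequality follows from (C): since $(r_j+\xi_j)^{-1}\geq 4/(3r)$ and $s_j\leq s_0/4^j\leq E\sqrt{n-1}/(4^jQ)$ while (C7) makes $Q$ large enough that $s_{j+1}/s_j$ is well below $1/4$, one gets $\bar t_j\geq\bar t$ for every $j$; in particular $\bar t_0\geq\bar t$ is immediate from $(r_0+\xi_0)^{-1}=3/(2r)$. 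Chaining these facts gives $z_0^+(t)\in V_{2s_0,2\xi_0}(z_0)$ for $0\leq t\leq\bar t$, and pushing through $\Phi_0$ alone yields $z(t)\in V_{3s_0,3\xi_0}(z_0)$, hence $|I(z(t))-I(z_0)|\leq 3s_0\leq s$. This also makes your "sum-of-jumps plus accumulated $\Phi_i$-errors" bookkeeping unnecessary: the action bound comes out directly as $3s_0$ from the domain in the final pull-back, with no geometric series and no error absorption.
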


Let us first prove this proposition. The fact that this proposition implies Theorem~\ref{mainthm03} simply follows from a suitable choice of $m$ and $Q$ (in terms of our given parameters) and will be detailed later.

\begin{proof}[Proof of Proposition~\ref{algo}]
The proof follows from an algorithm whose inductive step is given by Proposition~\ref{algostep}. But first we need to initiate the algorithm. By assumptions we have 
\[ \kappa \leq ||\nabla h (I(z_0))|| \leq E \] 
and so we can apply Lemma~\ref{dirichlet} to $v_0:=\nabla h (I(z_0))$: there exists a $T_0$-periodic vector $\omega_0 \in \R^n\setminus\{0\}$ such that
\begin{equation}\label{T0}
||v_0-\omega_0||\leq \sqrt{n-1}(T_0Q)^{-1}, \quad E^{-1} \leq T_0 \leq \sqrt{n}\kappa^{-1}Q^{n-1}. 
\end{equation} 
We define
\[ H_0:=H, \quad s_0:=\sqrt{n-1}(T_0Q)^{-1}, \quad r_0:=r/2, \quad \xi_0:=r_0/3=r/6, \]
and observe that $H_0 \in NF_0(\omega_{-1},z_0,s_0,r_0,\xi_0,F,\varepsilon,m)$. Indeed, $\tilde{\lambda}_0=\C^n$, $r_0+3\xi_0=r$ so that $\mathcal{V}_{3s_{0},3\xi_0}(z_0) \subseteq \mathcal{B}_r$, and we can write $H_0=h+f=h+g_0+f_0$, with $g_0:=f$ and $f_0:=0$,
\[ ||X_{g_0}||_{3r_0,3\xi_0}\leq ||X_f||_{s} \leq \varepsilon, \]
as the requirement $\{l_{\omega_{-1}},g_0\}=0$ is void since $\omega_{-1}=0$. In fact, using the first inequality of~\eqref{T0} and assuming that
\begin{equation}\label{cond0}
\begin{cases}\tag{C0}
Q \geq (5r^2)^{-1}36E\sqrt{n-1}, \\
216n\sqrt{n-1}^{-1}\kappa^{-2}rmQ^{2n-1}\varepsilon \leq 1, \\
Q \geq m216.3(3F\sqrt{n}+1)\sqrt{n-1},
\end{cases}
\end{equation}
one easily check that, using the definitions of $s_0$, $ r_0$, $\xi_0$ (which gives in particular $r_0+3\xi_0=r$ and $(r_0+3\xi_0)\xi_0^{-1}=6$) and the second estimate of~\eqref{T0}, that $H_0 \in \widetilde{NF}_0(\omega_0,z_0,s_0,r_0,\xi_0,F,\varepsilon,m)$. So Proposition~\ref{algostep} can be applied.

If Alternative $(1)$ of Proposition~\ref{algostep} holds true, the solution $z_0^+(t)$ of $H_0^+=H_0\circ \Phi_0$ satisfies $z_0^+(t) \in V_{s_0,\xi_0}(z_0)$ for $0 \leq t \leq \bar{t}_0$. As $\Phi_0$ sends $V_{2s_{0},2\xi_0}(z_0)$ into $V_{3s_{0},3\xi_0}(z_0)$ and $\bar{t} \leq \bar{t}_0$, then $\Phi_0(z_0^+(t))=z_0(t)=z(t)$ satisfies in particular
\begin{equation}\label{estfinal}
|I(z(t))-I(z_0)| \leq 3s_0 \leq s, \quad 0 \leq t \leq \bar{t}, 
\end{equation}
the proposition is proved and the algorithm stops.

If Alternative $(2)$ of Proposition~\ref{algostep} holds true, then there exist a $T_{1}$-periodic vector $\omega_{1}\in \R^n\setminus\{0\}$, linearly independent from $\omega_0$ with the estimate
\begin{equation}\label{T1}
(F'\sqrt{n} s_0)^{-1}< T_{1} < \sqrt{n} \mu_0^{-1} s_0^{-p_{n-1}} Q^{n-1}, 
\end{equation}
and
\[ z_{1}=z_0^+(\tilde{t}_0) \in B_{r_1}, \quad s_{1}=2\sqrt{n-1}(T_{1}Q)^{-1}, \quad r_{1}=r_0+\xi_0, \quad \xi_{1}=\xi_0/3 \]  
such that $H_0^+ \in \widetilde{NF}_1(\omega_0,\omega_{1},z_{1},s_{1},r_{1},\xi_{1},F,\varepsilon,m)$ and 
\begin{equation}\label{apart222}
z_0^+(t) \in V_{s_0,\xi_0}(z_0), \quad 0 \leq t \leq \tilde{t}_0, 
\end{equation}  
provided that 
\begin{equation}\label{cond1}
\begin{cases}\tag{C1}
Q>E\sqrt{n-1}(8\eta)^{-1}, \\
Q>E\sqrt{n-1}(8\delta)^{-1}, \\
Q \geq 8F'\sqrt{n(n-1)}, \\
2\sqrt{n}F'\sqrt{n}^{a_1^1+a_0^0}\sqrt{n-1}^{-(a_1^1+a_0^0)}\mu_{0}^{-1}\kappa^{-a_1^1} Q^{n(a_1^1+a_0^0)}\varepsilon \leq 1, \\
180r(n-1)^{-a_1^1}n^{a_1^1}\sqrt{n-1}\kappa^{-2p_{n-1}}\mu_0^{-2}mQ^{2na_1^1-1}\varepsilon \leq 1, \\
Q \geq m216.10(3F\sqrt{n}+1)\sqrt{n-1}.
\end{cases}
\end{equation}
Indeed, using the definitions of $s_0$, $s_1$, $r_0$, $r_1$, $\xi_0$, $\xi_1$ (in particular, we use the facts that $s_1 \leq s_0$, $s_1 \geq \sqrt{n-1}(T_1Q)^{-1}$, $r_1+3\xi_1=5r/6$ and $(r_1+3\xi_1)\xi_1^{-1}=15$) and the estimate~\eqref{T0} and~\eqref{T1} on respectively $T_0$ and $T_1$, one can check that~\eqref{cond1} imply~\eqref{thr4} for $j=1$. Setting $H_1:=H_0^+\in \widetilde{NF}_1(\omega_0,\omega_{1},z_{1},s_{1},r_{1},\xi_{1},F,\varepsilon,m)$, we can apply Proposition~\ref{algostep} again. 

If Alternative $(1)$ holds true, then the solution $z_1^+(t)$ of $H_1^+=H_1\circ \Phi_1=H_0^+\circ \Phi_1=H_0 \circ \Phi_0 \circ \Phi_1$ starting at $z_1^+=\Phi_1^{-1}(z_1)$ satisfies $z_1^+(t) \in V_{s_1,\xi_1}(z_1)$ for $0 \leq t \leq \bar{t}_1$. As $\Phi_1$ sends $V_{2s_{1},2\xi_1}(z_1)$ into $V_{3s_{1},3\xi_1}(z_1)$, then $\Phi_1(z_1^+(t))=z_1(t)$ belongs to $V_{3s_{1},3\xi_1}(z_1)$ for $0 \leq t \leq \bar{t}_1$. By~\eqref{incl}, $V_{3s_{1},3\xi_1}(z_1)$ is contained in $V_{2s_{0},2\xi_0}(z_0)$, and as $\bar{t} \leq \bar{t}_1$, $z_1(t)$ belongs to $V_{2s_{0},2\xi_0}(z_0)$ for $0 \leq t \leq \bar{t}$.  Now observe that since $z_1=z_0^+(\tilde{t}_0)$, by uniqueness of the solutions associated to the system defined by $H_1=H_0^+$, we have the equality $z_1(t)=z_0^+(t+\tilde{t}_0)$ as long as the solution is defined. Using this equality, what we have proved is that
\begin{equation*}
z_0^+(t) \in V_{2s_{0},2\xi_0}(z_0), \quad \tilde{t}_0 \leq t \leq \tilde{t}_0+\bar{t} 
\end{equation*}    
But recall that from~\eqref{apart222}, we know that 
\[ z_0^+(t) \in V_{s_0,\xi_0}(z_0), \quad 0 \leq t \leq \tilde{t}_0,\]
and therefore, since $\bar{t} < \tilde{t}_0+\bar{t}$, we have in particular
\[ z_0^+(t) \in V_{2s_0,2\xi_0}(z_0), \quad 0 \leq t \leq \bar{t}. \] 
As before, using this and the fact that $\Phi_0$ sends $V_{2s_{0},2\xi_0}(z_0)$ into $V_{3s_{0},3\xi_0}(z_0)$ we also arrive at the estimate~\eqref{estfinal}. 

If Alternative $(2)$ holds true, then the algorithm continues. To apply Proposition \ref{algostep} at a step $j$, for $1 \leq j \leq n-1$, it is sufficient to check that 
\begin{equation}\label{condj}
\begin{cases}\tag{C$j$}
Q>E\sqrt{n-1}(8\eta)^{-1}, \\
Q>E\sqrt{n-1}(8\delta)^{-1}, \\
Q \geq 8F'\sqrt{n(n-1)}, \\
2\sqrt{n}F'\sqrt{n}^{a_j^j+a_{j-1}^{j-1}}\sqrt{n-1}^{-(a_j^j+a_{j-1}^{j-1})}\nu_{j}^{-1}\nu_{j-1}^{-1}\kappa^{-(\pi_j^j+\pi_{j-1}^{j-1})} Q^{n(a_j^j+a_{j-1}^{j-1})}\varepsilon \leq 1, \\
2^{j}27(3+3^{-j})r(n-1)^{-a_j^j}n^{a_j^j}\sqrt{n-1}\kappa^{-2\pi_j^j}\nu_j^{-2}mQ^{2na_j^j-1}\varepsilon \leq 1, \\
Q \geq m216(3^{j+1}+1)(3F\sqrt{n}+1)\sqrt{n-1}.
\end{cases}
\end{equation}
Indeed, \eqref{condj} implies \eqref{thr4}, using the definitions of $s_i$, $r_i$ and $\xi_i$ for $0 \leq i \leq j$ (which imply in particular that the $s_i$ are decreasing, $s_i \geq \sqrt{n-1}(T_iQ)^{-1}$, $r_i+3\xi_i=r(3+3^{-i})/4$ and $(r_i+3\xi_i)\xi_i^{-1}=3(3^{i+1}+1)/2$), and the estimates on the period $T_i$ that one obtains at each step using~\eqref{borneperiode}. To conclude, just observe that the conditions~\eqref{cond} imply the conditions~\eqref{cond0} and~\eqref{condj} for any $1 \leq j \leq n-1$.  For $j=n-1$, there is only one possibility in Proposition~\eqref{algostep}, the algorithm stops and the statement is proved.  This ends the proof.
\end{proof}

\begin{proof}[Proof of Theorem~\ref{mainthm03}]
We just need to choose $m$ and $Q$ in Proposition~\ref{algo} in terms of our given parameters. First we choose $m$ in terms of $Q$ as follows:
\[ m:=[b_1Q], \quad b_1=(216(3F\sqrt{n}+1)(3^n+1)\sqrt{n-1})^{-1} \]
where $[\,\cdot\,]$ denotes the integer part. Using this choice, the conditions~\eqref{cond} are implied by
\begin{equation}\label{condd}
Q \geq b_2, \quad Q \geq b_3\delta^{-1}, \quad Q \geq b_4 r^{-2}, \quad rb_5 Q^{2na}\varepsilon \leq 1, \quad b_6Q^{n(a+a')}\varepsilon \leq 1,
\end{equation} 
where
\[ b_2:=\max\{8F'\sqrt{n(n-1)},E\sqrt{n-1}(8\eta)^{-1},b_1^{-1}\} \]
\[ b_3:=E\sqrt{n-1}8^{-1} \]
\[ b_4:=5^{-1}36E\sqrt{n-1} \]
\[ b_5:=2^{n-1}27(3+3^{-n+1})n^{a}(n-1)^{-a}\sqrt{n-1} \nu_{n-1}^{-2} \kappa^{-2\pi_{n-1}^{n-1}} b_1  \] 
\[ b_6:=2\sqrt{n}F'\sqrt{n}^{a+a'}\sqrt{n-1}^{-(a+a')}\nu_{n-1}^{-1}\nu_{n-2}^{-1}\kappa^{-(\pi_{n-1}^{n-1}+\pi_{n-2}^{n-2})}. \]
Then we choose $Q$ as follows:
\[ Q:=(b_5r\varepsilon)^{-\frac{1}{2na}}\]
and observe that~\eqref{condd} becomes
\begin{equation}\label{conddd}
r\varepsilon \leq b_5^{-1}b_2^{-2na}, \quad r\varepsilon \leq b_5^{-1}b_3^{-2na}\delta^{2na}, \quad r\varepsilon \leq b_5^{-1}b_4^{-2na}r^{4na}, \quad r\varepsilon \leq b_6^{-\frac{2a}{a-a'}}b_5^{\frac{a+a'}{a-a'}}r^{\frac{2a}{a-a'}}.
\end{equation} 
With these choices of $m$ and $Q$, since $m>b_1Q-1$ we have
\[ s=3E\sqrt{n-1}b_5^{\frac{1}{2na}}(r\varepsilon)^{\frac{1}{2na}} \]
and 
\[ \bar{t}\geq 3(4E\sqrt{n-1})^{-1}b_5^{-\frac{1}{2na}}r^{-1}(r\varepsilon)^{-\frac{1}{2na}}\exp\left((\ln2) b_1 b_5^{-\frac{1}{2na}}r^{-1}(r\varepsilon)^{-\frac{1}{2na}}\right) \]
so if we define
\[\tilde{c}_1:=b_5^{-1}b_2^{-2na}, \quad \tilde{c}_2:=b_5^{-1}b_3^{-2na}, \quad \tilde{c}_3:=b_5^{-1}b_4^{-2na}, \quad \tilde{c}_4:=b_6^{-\frac{2a}{a-a'}}b_5^{\frac{a+a'}{a-a'}}  \]
and
\[  \tilde{c}_5:=2E\sqrt{n-1}b_5^{\frac{1}{2na}}, \quad \tilde{c}_6:=3(4E\sqrt{n-1})^{-1}b_5^{-\frac{1}{2na}}, \quad \tilde{c}_7:=(\ln2) b_1 b_5^{-\frac{1}{2na}} \]
we eventually obtain that if
\begin{equation}\label{seuil}
r\varepsilon \leq \min\left\{\tilde{c}_1,\tilde{c}_2\delta^{2na},\tilde{c}_3r^{4na},\tilde{c}_4r^{\frac{2a}{a-a'}}\right\}
\end{equation}
then
\[ |I(z(t))-I(z_0)|\leq \tilde{c}_5(r\varepsilon)^{\frac{1}{2na}}, \quad 0 \leq t \leq \tilde{c}_6r^{-1}(r\varepsilon)^{-\frac{1}{2na}} \exp\left(\tilde{c}_7r^{-1}(r\varepsilon)^{-\frac{1}{2na}}\right).  \]
This proves the statement for positive times, but for negative times, the proof is of course the same, so this concludes the proof.
\end{proof}

\appendix

\section{Proof of generic steepness} \label{app.steep}

The aim of this section is to give the proof of Theorem~\ref{proputile2}. The latter will be an immediate consequence of Propositions \ref{propNekho} and \ref{propNekho2} below. 
We shall use in the proof of these propositions basic results concerning semi-algebraic subsets; for proofs and more information we refer to \cite{BCR98}. 
Our main ingredient to prove Theorem~\ref{proputile2} is a result of Nekhoroshev on stably expanding polynomials that we will now state. 

Let us first recall that $P(n,m)$ denotes the space of polynomials of degree $m$ in $n$ variables with real coefficients, and $P_2(n,m)$ the subspace of $P(n,m)$ consisting of polynomials with vanishing homogeneous parts of order zero and one. The following definition, which is related to the definition of stably steep polynomials, is due to Nekhoroshev (\cite{Nek73}).

\begin{definition}\label{stabexp}
Let $1 \leq l \leq n-1$. A polynomial $Q_0 \in P_2(l,m)$ is called stably expanding if there exist a neighborhood $U_l$ of $Q_0$ in $P_2(l,m)$ and positive constants $C_l',\delta_l'$ such that for any $Q \in U_l$, the inequality
\[ \max_{0 \leq \eta \leq \xi}\;\min_{||y||=\eta}||\nabla Q(y)||>C_l'\xi^{m-1} \]
holds true for all $0 < \xi \leq\delta_l'$. \end{definition}

The set of stably expanding polynomials in $P_2(l,m)$ will be denoted by $SE(l,m)$. 

\begin{theorem}[Nekhoroshev]\label{thmNekho}
Let $1 \leq l \leq n-1$. The complement of $SE(l,m)$ in $P_2(l,m)$ is contained in a closed semi-algebraic subset $\Sigma(l,m)$ of codimension $[m/2]$. 
\end{theorem}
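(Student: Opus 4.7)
The plan is to prove Theorem~\ref{thmNekho} in two parts: first, exhibit a closed semi-algebraic subset $\Sigma(l,m) \subseteq P_2(l,m)$ that contains every polynomial failing to be stably expanding; second, show that $\Sigma(l,m)$ has codimension at least $[m/2]$.

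For the first part, I would characterize the failure of stable expansion through an algebraic condition on the gradient. If $Q \notin SE(l,m)$, then for every neighborhood of $Q$ and every pair $(C',\delta')$, one can find a nearby $\tilde{Q}$ and some scale $\xi$ for which every sphere of radius $\eta \leq \xi$ meets the sublevel set $\{\|\nabla \tilde{Q}\| \leq C'\xi^{m-1}\}$. Passing to the limit along sequences $(C'_k,\delta'_k)\to 0$ and applying the curve selection lemma (for semi-algebraic sets) together with a compactness argument on the unit sphere, I would extract a non-trivial real analytic arc $\gamma:[0,\varepsilon)\to \R^l$ with $\gamma(0)=0$ along which $\|\nabla Q(\gamma(t))\|$ has $t$-order strictly greater than $m-1$. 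Since $Q$ is a polynomial of degree $m$, any such arc admits a Puiseux parametrization of bounded complexity, so this existence translates, via elimination of $\gamma$, into a finite system of polynomial equalities on the coefficients of $Q$. The resulting set $\Sigma(l,m)$ is then semi-algebraic by Tarski–Seidenberg, closed by construction, and contains $P_2(l,m)\setminus SE(l,m)$.

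For the second part, I would estimate the codimension by analyzing the ``evaluation map'' that sends a pair $(Q,\gamma)$, where $\gamma$ is a Puiseux arc normalized to unit leading coefficient, to the sequence of Taylor coefficients of $\nabla Q \circ \gamma$. Forcing the first $[m/2]$ of these coefficients to vanish imposes roughly $l \cdot [m/2]$ equations on $(Q,\gamma)$. The moduli of arcs $\gamma$ contribute at most a dimension of order $l \cdot ([m/2]-1)$, so projecting to the $Q$-factor leaves at least $[m/2]$ independent conditions; a dimension-count of the image (handled via Chevalley's theorem on constructible sets, or equivalently via resultants in the semi-algebraic category) then yields codimension at least $[m/2]$.

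The main obstacle is the codimension estimate. The delicate point is that the vanishing of successive Taylor coefficients of $\nabla Q \circ \gamma$ are not obviously independent once $\gamma$ is allowed to vary, and one must rule out ``conspiracies'' in which high-degree terms of $Q$ absorb the leading vanishing conditions by adjusting $\gamma$. Overcoming this requires a careful stratification of $P_2(l,m)$ according to the degree of degeneracy of $Q_2 = $ the quadratic part, followed by an induction on this degeneracy that, at each step, pairs one coefficient of $\nabla Q \circ \gamma$ with one free parameter of $\gamma$ and leaves one net constraint on $Q$; this pairing is exactly what produces the integer part $[m/2]$, with the possible loss of one dimension when $m$ is odd.
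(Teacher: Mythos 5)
The paper does not actually prove Theorem~\ref{thmNekho}: it is stated as a result of Nekhoroshev and cited from \cite{Nek73}, with no argument given. So there is no ``paper's own proof'' to compare against; what you have written is an attempt at a from-scratch proof of a fairly deep result, and it should be judged on its own terms.

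As such, your outline has the right ingredients --- curve selection and Puiseux parametrizations to witness failure of stable expansion along an analytic arc, Tarski--Seidenberg to get semi-algebraicity of the resulting set, and a dimension count on a parametrized family of arcs --- but the codimension estimate, which you yourself flag as the crux, does not close. Your bookkeeping gives $l\cdot[m/2]$ equations from vanishing Taylor coefficients of $\nabla Q\circ\gamma$ and about $l\cdot([m/2]-1)$ moduli for the arc; subtracting, the expected number of surviving constraints on $Q$ is of order $l$, not $[m/2]$. In particular for $l$ small relative to $m$ (e.g.\ $l=1$, large $m$) this naive count undershoots the claimed codimension badly, so the bound cannot be extracted by this direct pairing of equations against arc parameters. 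The ``conspiracy'' issue you mention is a red herring in comparison: the real problem is that the moduli of arcs are not the right codimension-budget to subtract, and one instead needs to exploit the structure of the resonance/degeneracy modules (in Nekhoroshev's original argument, a careful analysis of which jets of $Q$ can be killed on a subspace, organized by the dimension $l$ and the degree $m$) to see that each additional degree of $Q$ up to $m$ yields one genuinely new constraint, producing the $[m/2]$.

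Beyond that, the first half of the argument is also less routine than presented: extracting a single real-analytic arc from the failure of a uniform, open condition (``stably'' expanding, i.e.\ with a neighborhood of $Q$ and uniform $C',\delta'$) requires justifying the passage from a sequence of nearby polynomials $\tilde Q_k$ back to $Q$ itself; the curve selection lemma applies to a fixed semi-algebraic set, not immediately to a limit of conditions on varying $\tilde Q$. A semi-algebraic uniform-boundedness/compactness argument can repair this, but it needs to be stated. As written, both halves are plausible sketches rather than proofs, and the codimension half has a concrete arithmetic gap. Given that the paper treats this as a black-box citation to \cite{Nek73}, the honest options are to cite Nekhoroshev as the paper does, or to reconstruct his actual stratification-by-degeneracy argument, which is substantially more involved than the outline you give.
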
  

Let us denote by $L(n,l)$ the space of rectangular matrices with $n$ rows and $l$ columns, with real coefficients, and by $L_1(n,l)$ the open subset of $L(n,l)$ consisting of matrices of maximal rank. Any $A \in L(n,l)$ induces a linear map $A : \R^l \rightarrow \R^n$, hence given $P \in P(n,m)$, we can define $P_A \in P(l,m)$ by setting $P_A(x)=P(Ax)$, $x \in \R^l$. Moreover, if $P \in P_2(n,m)$, then $P_A \in P_2(l,m)$. Let us define the set
\[ \Theta(l,n,m_0)=\{(P,A,Q) \in P_2(n,m_0) \times L_1(n,l) \times \Sigma(l,m_0) \; | \; P_A=Q \}. \]
Then we define $\Upsilon(l,n,m_0)$ to be the projection of $\Theta(l,n,m_0)$ on the first factor $P_2(n,m_0)$, and finally
\[ \Upsilon(n,m_0)=\bigcup_{l=1}^{n-1}\Upsilon(l,n,m_0). \]  
Theorem~\ref{proputile2} is a straightforward consequence of the following two properties of the set $\Upsilon(n,m_0)$. 

\begin{proposition}\label{propNekho}
The set $\Upsilon(n,m_0)$ is a semi-algebraic subset of $P_2(n,m_0)$ of codimension at least one.
\end{proposition}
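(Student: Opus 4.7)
The proof splits into two independent parts: showing that $\Upsilon(n,m_0)$ is semi-algebraic, and bounding its codimension from below by one. My plan for semi-algebraicity is a direct application of Tarski--Seidenberg. Each $\Theta(l,n,m_0)$ is cut out inside $P_2(n,m_0)\times L_1(n,l)\times \Sigma(l,m_0)$ by the polynomial equation $P_A=Q$; the factor $L_1(n,l)$ is itself the open semi-algebraic set where some $l\times l$ minor of $A$ is nonzero, and $\Sigma(l,m_0)$ is semi-algebraic by Theorem~\ref{thmNekho}. Hence $\Theta(l,n,m_0)$ is semi-algebraic, its first-factor projection $\Upsilon(l,n,m_0)$ is semi-algebraic by Tarski--Seidenberg, and the finite union $\Upsilon(n,m_0)$ is therefore semi-algebraic.

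For the codimension bound, it is enough to treat each $\Upsilon(l,n,m_0)$ separately. My plan is a two-step dimension count. First, for fixed $A\in L_1(n,l)$ the linear map $\phi_A:P\mapsto P_A$ from $P_2(n,m_0)$ to $P_2(l,m_0)$ is surjective: given any $Q$ and any left inverse $A^+$ of $A$, the polynomial $Q\circ A^+$ is a preimage. Hence $\phi:(P,A)\mapsto P_A$ is a submersion in the $P$-direction, and $\phi^{-1}(\Sigma(l,m_0))$ inherits codimension $[m_0/2]$ in $P_2(n,m_0)\times L_1(n,l)$. Identifying $\Theta(l,n,m_0)$ with this preimage, since the $Q$-coordinate is determined by $(P,A)$, I obtain $\dim\Theta(l,n,m_0)=\dim P_2(n,m_0)+nl-[m_0/2]$.

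Second, I would exploit that the stably expanding set $SE(l,m_0)$, and consequently Nekhoroshev's set $\Sigma(l,m_0)$, can be taken invariant under the right action $Q\mapsto Q\circ M$ of $GL(l)$ on $P_2(l,m_0)$: stable expansion is preserved by any invertible linear change of coordinates, with the constants $C$ and $\delta$ merely rescaled by suitable powers of the singular values of $M$; if the $\Sigma(l,m_0)$ provided by Theorem~\ref{thmNekho} is not $GL(l)$-invariant, one replaces it by its $GL(l)$-saturation, which remains semi-algebraic and still contains $SE(l,m_0)^c$ with the same codimension since $SE(l,m_0)^c$ is already $GL(l)$-invariant. With this in hand, $\Theta(l,n,m_0)$ is preserved by the right action $A\mapsto AM$ on $L_1(n,l)$, as $P_{AM}=P_A\circ M$. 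The fibers of $\pi:\Theta(l,n,m_0)\to P_2(n,m_0)$ are therefore unions of $GL(l)$-orbits of generic dimension $l^2$, giving
\[ \dim\Upsilon(l,n,m_0)\leq \dim\Theta(l,n,m_0)-l^2=\dim P_2(n,m_0)+l(n-l)-[m_0/2]. \]
In effect this quotients the fibration $L_1(n,l)\to Gr(l,n)$ by its $GL(l)$-fiber, leaving the Grassmannian of dimension $l(n-l)$ as the effective parameter.

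To conclude, a direct case analysis shows that $l(n-l)<[m_0/2]$ for every $l\in[1,n-1]$: the left-hand side is at most $\lfloor n^2/4\rfloor$, while the choice $K_0(n)=n^2+4$ is calibrated precisely so that $[m_0/2]\geq \lfloor n^2/4\rfloor+1$ for all $n\geq 2$ (a short verification separating the parities of $n$). Hence each $\Upsilon(l,n,m_0)$ has codimension at least one in $P_2(n,m_0)$, and the same bound passes to the finite union $\Upsilon(n,m_0)$.

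The main obstacle I anticipate is the quantitative $GL(l)$-invariance of $SE(l,m_0)$: while the idea that stable expansion is a diffeomorphism invariant of the germ of $Q$ at the origin is geometrically transparent, tracking how the constants $C,\delta$ transform under a general $M\in GL(l)$ requires carefully comparing the ellipsoidal images $\{My:\|y\|=\eta\}$ with the concentric spheres used in Nekhoroshev's max--min inequality.
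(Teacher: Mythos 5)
Your proof follows the same structure as the paper's: Tarski--Seidenberg for semi-algebraicity; the identity $\dim\Theta(l,n,m_0)=\dim P_2(n,m_0)+nl-[m_0/2]$ (you via the surjectivity of $P\mapsto P_A$ fiberwise, the paper by directly adding up the dimensions of $\Theta_{A,Q}$, $L_1(n,l)$, and $\Sigma(l,m_0)$ --- the same count); a lower bound $l^2$ on the fibers of $\pi\colon\Theta(l,n,m_0)\to P_2(n,m_0)$ from a free $GL(l)$-action; and the same final arithmetic $l(n-l)\leq\lfloor n^2/4\rfloor<[m_0/2]$, for which $K_0=n^2+4$ is indeed calibrated. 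So this is the paper's proof, with one additional observation you make that the paper passes over silently.

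That observation is correct and worth raising: for $GL(l)$ to act on the fiber $\Theta_P(l,n,m_0)$ via $(P,A,Q)\mapsto(P,AM,Q\circ M)$, one really does need $\Sigma(l,m_0)$ to be $GL(l)$-invariant (the paper only writes ``it is then easy to see that $GL(l)$ acts freely on $\Theta_P$''). Your intuition that $SE(l,m_0)$ and $SE(l,m_0)^c$ are $GL(l)$-invariant (up to tracking how $C,\delta$ transform) is right, and the honest resolution is that Nekhoroshev's $\Sigma(l,m_0)$ can be constructed $GL(l)$-invariant for exactly this reason. However, your fallback --- replacing $\Sigma$ by its $GL(l)$-saturation $\bigcup_{M\in GL(l)}M\cdot\Sigma$ --- has a genuine gap: saturating can strictly enlarge the set and \emph{lower} its codimension (e.g.\ the orbit of a codimension-$N$ set under a transitive group is all of the ambient space). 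The fact that $SE^c$ is $GL(l)$-invariant controls $SE^c$ but says nothing about the codimension of the saturated $\Sigma$, which need not equal $[m_0/2]$. What would work is the opposite operation, taking the $GL(l)$-invariant \emph{core} $\bigcap_{M}M\cdot\Sigma$, which still contains $SE^c$ and has codimension $\geq[m_0/2]$; but that infinite intersection is not obviously semi-algebraic, so the cleanest argument remains to invoke the invariance of $\Sigma(l,m_0)$ itself as part of Theorem~\ref{thmNekho} rather than to repair a non-invariant $\Sigma$ afterwards.
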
 

\begin{proposition}\label{propNekho2}
The complement of $SS(n,m_0)$ in $P_2(n,m_0)$ is contained in $\Upsilon(n,m_0)$. 
\end{proposition}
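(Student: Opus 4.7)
The plan is to show that if $P \in P_2(n, m_0)$ fails to be stably steep, then one can produce an integer $l \in [1, n-1]$, an orthonormal frame $A \in L_1(n, l)$, and a restriction $Q := P \circ A \in P_2(l, m_0)$ that fails to be stably expanding; by Theorem \ref{thmNekho} this forces $Q \in \Sigma(l, m_0)$, so that $(P, A, Q) \in \Theta(l, n, m_0)$ and hence $P \in \Upsilon(l, n, m_0) \subseteq \Upsilon(n, m_0)$.

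The first step is to negate Definition \ref{stabsteep}: taking a countable base of shrinking neighborhoods $V_k$ of $P$ and constants $C_k, \delta_k \to 0$, I extract for each $k$ a dimension $l_k \in [1, n-1]$, a polynomial $P_k \in V_k$, a subspace $\Lambda_k \subseteq \R^n$ of dimension $l_k$, and a scale $\xi_k \in (0, \delta_k]$ violating the inequality of Definition \ref{stabsteep}. Pigeonholing on $l_k$ fixes a single value $l$, and compactness of the Stiefel manifold of orthonormal $n \times l$ frames yields, after a further extraction, a convergent sequence of orthonormal frames $A_k \to A$ with $\mathrm{Im}(A_k) = \Lambda_k$ for every $k$.

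The second step is to translate the failure of stable steepness on $\Lambda_k$ into the failure of stable expansion for $Q_k := P_k \circ A_k \in P_2(l, m_0)$. Because $A_k$ has orthonormal columns, the chain rule gives $\nabla Q_k(y) = A_k^\top \nabla P_k(A_k y)$ and $\|A_k^\top \nabla P_k(A_k y)\| = \|\nabla (P_k)_{\Lambda_k}(A_k y)\|$, while the map $y \mapsto A_k y$ parametrizes $\Lambda_k$ isometrically; the extracted inequalities therefore read
\[ \max_{0 \leq \eta \leq \xi_k}\;\min_{\|y\|=\eta} \|\nabla Q_k(y)\| \leq C_k \xi_k^{m_0-1}. \]
Continuity of polynomial composition yields $Q_k \to Q := P \circ A$ in $P_2(l, m_0)$, so given any neighborhood $U$ of $Q$ and any $C', \delta' > 0$, the pair $(Q_k, \xi_k)$ eventually witnesses the failure of the stably expanding inequality for $Q$. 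Hence $Q \notin SE(l, m_0)$, so $Q \in \Sigma(l, m_0)$ by Theorem \ref{thmNekho}, and the proof concludes as outlined above. The main point to be careful about, rather than a conceptual obstacle, is the simultaneous selection of convergent orthonormal frames $A_k \to A$ compatible with the convergence of the subspaces, which is standard thanks to the compactness of the Stiefel manifold.
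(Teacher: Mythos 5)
Your argument is correct, and it takes a genuinely different logical route from the paper's. The paper proves the contrapositive: starting from $P_0 \notin \Upsilon(n,m_0)$, it uses Theorem \ref{thmNekho} to conclude that every restriction of $P_0$ to an $l$-dimensional subspace is stably expanding, and then covers the compact Grassmannian $G(l,n)$ by finitely many neighborhoods (using a continuous local section $\Psi$ of the Stiefel bundle and continuity of the restriction map $F(P,\Lambda)=P_{\Psi(\Lambda)}$) to extract the uniform constants $C_l,\delta_l$ and the uniform neighborhood $V_l$ required by Definition \ref{stabsteep}. You instead prove the implication directly: negating Definition \ref{stabsteep}, you pigeonhole on the dimension $l$, use sequential compactness of the Stiefel manifold $O(n,l)$ to produce a convergent sequence of orthonormal frames $A_k \to A$, and show that the limit restriction $Q = P\circ A$ fails to be stably expanding, hence lies in $\Sigma(l,m_0)$ and exhibits $P$ as a point of $\Upsilon(l,n,m_0)$. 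Both arguments rely on the same two ingredients---compactness of the space of $l$-dimensional subspaces and continuity of the restriction operation jointly in the polynomial and the subspace---but the paper's covering argument is the one that directly manufactures the uniform neighborhood and constants that $SS$ demands (a uniformity over $l$ and $\Lambda$ invisible in $SE$), whereas your sequential version handles this implicitly and is arguably the more elementary reading of the equivalence $P_2(n,m_0)\setminus\Upsilon(n,m_0) = SS(n,m_0)$ (up to the extra semi-algebraic hull). One detail you correctly, if tersely, handle is that the isometry $y\mapsto A_k y$ together with $\nabla Q_k(y) = A_k^\top\nabla P_k(A_k y) = \Pi_{\Lambda_k}\nabla P_k(A_k y)$ makes the failure of the steepness inequality along $\Lambda_k$ translate verbatim into the failure of the expansion inequality for $Q_k$; this is exactly the computation the paper performs in the reverse direction.
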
 

The second proposition is true for any $m \geq 2$ and not just for $m=m_0$, but this will not be needed. 



Let us now give the proof of Proposition~\ref{propNekho} and Proposition~\ref{propNekho2}, following the arguments in \cite{Nek73}.

\begin{proof}[Proof of Proposition~\ref{propNekho}]
The set $P_2(n,m_0)$ is a real vector space hence it is algebraic, $L_1(n,l)$ is obviously an algebraic subset of $L(n,l)$ whereas, by Theorem~\ref{thmNekho}, $\Sigma(l,m_0)$ is a semi-algebraic subset of $P_2(l,m_0)$. Moreover, for $(P,A,Q) \in P_2(n,m_0) \times L_1(n,l) \times \Sigma(l,m_0)$, the equality $P_A=Q$ corresponds to a system of algebraic equations in the coefficients of $P$, $A$ and $Q$. This implies that $\Theta(l,n,m_0)$ is a semi-algebraic subset of $P_2(n,m_0) \times L(n,l) \times P_2(l,m_0)$. Now since the projection of a semi-algebraic subset is a semi-algebraic subset, $\Upsilon(l,n,m_0)$ is a semi-algebraic subset of $P_2(n,m_0)$. Then, as a finite union of semi-algebraic subsets is semi-algebraic, $\Upsilon(n,m_0)$ is a semi-algebraic subset of $P_2(n,m_0)$. We need to prove that the codimension of $\Upsilon(n,m_0)$ in $P_2(n,m_0)$ is at least one; to do this it is sufficient to prove that the codimension of $\Upsilon(l,n,m_0)$ in $P_2(n,m_0)$ is at least one for any $1 \leq l \leq n-1$. So let us fix $1 \leq l \leq n-1$. Given $(A,Q) \in L(n,l) \times P_2(l,m_0)$, we define $\Theta_{A,Q}(l,n,m_0)$ to be the intersection of $\Theta(l,n,m_0)$ with the set
\[ \{(P',A',Q') \in P_2(n,m_0) \times L(n,l) \times P_2(l,m_0) \; | \; A'=A, \; Q'=Q \}.  \]
If $(A,Q) \in L_1(n,l) \times \Sigma(l,m_0)$, it is easy to see that
\[ \mathrm{dim}\Theta_{A,Q}(l,n,m_0)=\mathrm{dim}P_2(n,m_0)-\mathrm{dim}P_2(l,m_0) \]  
and therefore
\begin{eqnarray*}
\mathrm{dim}\Theta(l,n,m_0) & = & \mathrm{dim}\Theta_{A,Q}(l,n,m_0) + \mathrm{dim}L_1(n,l) + \mathrm{dim}\Sigma(l,m_0) \\
& = & \mathrm{dim}P_2(n,m_0)-\mathrm{dim}P_2(l,m_0) + \mathrm{dim}L_1(n,l) + \mathrm{dim}\Sigma(l,m_0) \\
& = & \mathrm{dim}P_2(n,m_0)+ \mathrm{dim}L_1(n,l) - \mathrm{codim}\Sigma(l,m_0) \\
& = & \mathrm{dim}P_2(n,m_0)+ nl - [m_0/2]
\end{eqnarray*}
where in the last equality we used the fact that $\mathrm{dim}L_1(n,l)=\mathrm{dim}L(n,l)=nl$ and Theorem~\ref{thmNekho}. Now given $P \in P_2(n,m_0)$, we define $\Theta_{P}(l,n,m_0)$ to be the intersection of $\Theta(l,n,m_0)$ with the set
\[ \{(P',A',Q') \in P_2(n,m_0) \times L(n,l) \times P_2(l,m_0) \; | \; P'=P\}.  \]
Recall that if $GL(l)$ denotes the group of square invertible matrix of size $l$, with real coefficients, then $GL(l)$ acts freely on $L_1(n,l)$ (the quotient space is nothing but the Grassmannian $G(l,n)$, that is, the space of all $l$-dimensional subspaces of $\R^n$). It is then easy to see that $GL(l)$ acts freely on $\Theta_{P}(l,n,m_0)$, therefore the dimension of an orbit of this action equals the dimension of $GL(l)$, which is $l^2$, and hence, 
\[ \mathrm{dim}\Theta_{P}(l,n,m_0)=l^2. \]
Since $\Upsilon(l,n,m_0)$ is the projection of $\Theta(l,n,m_0)$ on the first factor $P_2(n,m_0)$, we have
\begin{eqnarray*}
\mathrm{dim}\Upsilon(l,n,m_0) & \leq & \mathrm{dim}\Theta(l,n,m_0)-l^2 \\
& \leq & \mathrm{dim}P_2(n,m_0)+ nl - [m_0/2] -l^2 \\
& \leq & \mathrm{dim}P_2(n,m_0)- [m_0/2] +l(n-l) \\
& \leq & \mathrm{dim}P_2(n,m_0)- [m_0/2] +[n^2/4] \\
& \leq & \mathrm{dim}P_2(n,m_0)-1
\end{eqnarray*}
where the last inequality follows from the definition of $m_0$. This proves that $\Upsilon(l,n,m_0)$ has codimension at least one in $P(n,m_0)$ for any $1 \leq l \leq n-1$, therefore $\Upsilon(n,m_0)$ has codimension at least one in $P(n,m_0)$ and this concludes the proof.   
\end{proof}

\begin{proof}[Proof of Proposition~\ref{propNekho2}]
To prove that the complement of $SS(n,m_0)$ in $P_2(n,m_0)$ is contained in $\Upsilon(n,m_0)$, we will prove that the complement of $\Upsilon(n,m_0)$ in $P_2(n,m_0)$ is contained in $SS(n,m_0)$. So we fix $P_0 \in P_2(n,m_0) \setminus \Upsilon(n,m_0)$ and $1 \leq l \leq n-1$. We denote by $O(n,l)$ the subset of $L_1(n,l)$ consisting of matrices whose columns are orthonormal vectors for the Euclidean scalar product. Recalling that the Grassmannian $G(l,n)$ is the quotient of $L_1(n,l)$ by $GL(l)$, it is also the quotient of $O(n,l)$ by the group $O(l)$ of orthogonal matrices of $\R^l$. Therefore given any $\Lambda_0 \in G(l,n)$, there exist an open neighborhood $B_{\Lambda_0}$ of $\Lambda_0$ in $G(l,n)$ and a continuous map $\Psi : B_{\Lambda_0} \rightarrow O(n,l)$ such that, if $\pi : O(n,l) \rightarrow G(l,n)$ denotes the canonical projection, then $\pi \circ \Psi$ is the identity. Let us now consider the continuous map
\[ F : P_2(n,m_0) \times B_{\Lambda_0} \rightarrow P_2(l,m_0), \quad F(P,\Lambda)=P_{\Psi(\Lambda)}.   \]
Since $P_0$ does not belong to $\Upsilon(n,m_0)$, by definition of the latter set it comes that $F(P_0,\Lambda)$ does not belong to $\Sigma(l,m_0)$ and therefore, by Theorem~\ref{thmNekho}, $F(P_0,\Lambda) \in SE(l,m_0)$ for any $\Lambda \in B_{\Lambda_0}$. Hence, by definition of $SE(l,m_0)$, there exist a neighborhood $U_l$ of $F(P_0,\Lambda)$ in $P_2(l,m)$ and positive constants $C_l', \delta_l'$ such that for any $Q \in U_l$, the inequality
\[ \max_{0 \leq \eta \leq \xi}\;\min_{||y||=\eta}||\nabla Q(y)||>C_l'\xi^{m_0-1} \]
holds true for all $0 < \xi \leq\delta_l'$. Now by continuity of $F$, we can find a neighborhood $V_l$ of $P_0$ in $P_2(n,m_0)$ and an open neighborhood $B_{\Lambda_0}' \subseteq B_{\Lambda_0}$ of $\Lambda_0$ in $G(l,n)$ such that $F(V_l \times B_{\Lambda_0}')$ is contained in $U_l$. So for any $P \in V_l$ and any $\Lambda \in B_{\Lambda_0}'$, we have 
\[ \max_{0 \leq \eta \leq \xi}\;\min_{||y||=\eta}||\nabla F(P,\Lambda)(y)||>C_l'\xi^{m_0-1} \]
for all $0 < \xi \leq\delta_l'$. Now since the columns of the matrix $\Psi(\Lambda)$ form an orthonormal basis of $\Lambda$, setting $x=\Psi(\Lambda)y$, $x \in \Lambda$, $||x||=||y||$ and hence 
\[ \min_{||y||=\eta}||\nabla F(P,\Lambda)(y)||=\min_{||x||=\eta, \; x \in \Lambda}||\Pi_\Lambda\nabla P(x)||=\min_{||x||=\eta, \; x \in \Lambda}||\nabla P_\Lambda(x)|| \]
where $\Pi_\Lambda$ is the orthogonal projection onto $\Lambda$, and $P_\Lambda$ is the restriction of $P$ to $\Lambda$. Therefore, for any $P \in V_l$ and any $\Lambda \in B_{\Lambda_0}'$, we have   
\[ \max_{0 \leq \eta \leq \xi}\;\min_{||x||=\eta, \; x \in \Lambda}||\nabla P_\Lambda(x)||>C_l'\xi^{m_0-1} \]
for all $0 < \xi \leq\delta_l'$. To conclude, since the Grassmannian $G(l,n)$ is compact, it can be covered by a finite number of neighborhoods of the form $B_{\Lambda_0}'$, $\Lambda_0 \in G(l,n)$, and hence one can certainly find positive constants $C_l, \delta_l$ such that for any $P \in V_l$ and any $\Lambda \in G(l,n)$, the inequality  
\[ \max_{0 \leq \eta \leq \xi}\;\min_{||x||=\eta, \; x \in \Lambda}||\nabla P_\Lambda(x)||>C_l\xi^{m_0-1} \]
holds true for all $0 < \xi \leq\delta_l$. This means that $P_0 \in SS(n,m_0)$, and this finishes the proof.   
\end{proof}

\section{Birkhoff normal forms with estimates}\label{s3}

The goal of this section is to give the proof of Proposition \ref{propBirkhoffutile}  using the work of Delshams and Gutiérrez (\cite{DG96b}). 

Given $l \in \N$ and $P$ a homogeneous polynomial in $\xi$ of degree $l$, if $P(\xi)=\sum_{|\nu|=l}P_\nu\xi^\nu$, we define the norm
\begin{equation}\label{normpol}
||P||:= \sum_{|\nu|=l}|P_\nu|.
\end{equation}
By our analyticity assumption on the Hamiltonian $H$ in \eqref{Hamintro}, we have the following expansion at the origin
\[ H(\xi)=\sum_{l \geq 2}H_l(\xi)=i\sum_{j=1}^n \alpha_j \xi_j \xi_{n+j}+\sum_{l \geq 3}H_l(\xi) \]
and there exist positive constants $c$ and $d$, which depends only on $n$, $R$ and $||H||_R$ such that for any integer $l \geq 2$, 
\begin{equation}\label{analytic}
||H_l|| \leq c^{l-2}d.
\end{equation}
Using Cauchy formula one easily proves that
\[ ||H_l||\leq (2R)^{-l}(e(2n+1))^l||H||_R \]
and therefore one can choose
\begin{equation}\label{cd}
c:=(2R)^{-1}e(2n+1), \quad d:=(2R)^{-2}(e(2n+1))^2||H||_R.
\end{equation}

Given any function $f$ that can be written as $f=\sum_{k}P_k$, with each $P_k$ homogeneous of degree $k$ in $\xi$, one easily check that
\begin{equation}\label{estnormpol}
\sup_{\xi \in \mathcal{B}_\rho}|f(\xi)| \leq \sum_{k}||P_k||\rho^k,
\end{equation}
and, if $g=\sum_{k, \, k\;\mathrm{even}}Q_k$, with each $Q_k$ homogeneous of degree $k/2$ in $I(\xi)$, then
\begin{equation}\label{estnormpol2}
\sup_{I \in \mathcal{D}_\rho}|g(I)| \leq \sum_{k, \, k\;\mathrm{even}}||Q_k||(\rho^2/2)^{k/2}.
\end{equation} 
Moreover, the above estimates hold true if $f$ is replaced by a tensor-valued function. Recall the definition of $\Psi_\a$ given in~\eqref{fonctionpsi}. Recall that we also defined for any integer $j \geq 3$, $\psi_\alpha^j=\prod_{i=3}^{j}\Psi_\alpha(i)$
and for convenience, we set $\psi_\alpha^2:=1$. We can finally state the main technical proposition of \cite{DG96b}.

\begin{proposition}[Delshams-Gutiérrez]\label{propBirkohffDG} 
Let $H$ be as in \eqref{Hamintro} with 
$\a$ as in \eqref{fonctionpsi} and consider an integer $K \geq 4$. If we define 
\begin{equation*}
\rho_K:=(548ncd K\Psi(K))^{-1},
\end{equation*}
then there exists a real-analytic symplectic transformation $\Phi^K= \text{Id} + O(\xi^2)$ defined on $\mathcal{B}_{\rho_K}$ such that $H \circ \Phi^K$ is in Birkhoff normal form up to order $K$, that is
\[ H \circ \Phi^K(\xi)=\alpha \cdot I(\xi)+\sum_{k \;\mathrm{even}, \: 4 \leq k \leq K}h_k(I(\xi))+\sum_{k \geq K+1}f_k(\xi)\]
where $h_k$ is a homogeneous polynomial of degree $k/2$ in $I(\xi)$, $f_k$ a homogeneous polynomial of degree $k$ in $\xi$, with the following estimates:
\begin{equation*}
||h_k|| \leq 6^{-1}(6cd)^{k-2}(k-2)!\psi_\alpha^{k-1}, \quad k\; \mathrm{even}, \; 4 \leq k \leq K;
\end{equation*}
\begin{equation*}
||f_k|| \leq 20d^2(20cd)^{k-2}(K-3)!(K-2)^{k-K+2}\psi_\alpha^{K-1}\Psi_\alpha(K)^{k-K+2}, \quad k \geq K+1.
\end{equation*}
\end{proposition}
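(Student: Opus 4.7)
The plan is to construct $\Phi^K$ by the classical iterative Birkhoff scheme as a finite composition $\Phi^K = \phi_3 \circ \phi_4 \circ \cdots \circ \phi_K$, where each $\phi_k$ is the time-one flow of an auxiliary Hamiltonian $\chi_k$ which is homogeneous of degree $k$ in $\xi$ and is chosen to eliminate the non-resonant part of the degree-$k$ terms. Assume inductively that after $\phi_3 \circ \cdots \circ \phi_{k-1}$ the Hamiltonian has the form $H_2 + Z^{(k-1)}(I(\xi)) + N_k(\xi) + R_{\geq k+1}(\xi)$, where $Z^{(k-1)}$ gathers the resonant terms of even degrees less than $k$. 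One then solves the homological equation $\{H_2,\chi_k\} + N_k = Z_k$, where $Z_k$ is the resonant component of $N_k$: it vanishes for $k$ odd and equals $h_k(I(\xi))$ for $k$ even. In the coordinates $\xi$ the operator $\{H_2,\cdot\}$ acts diagonally on monomials $\xi^{(\mu^+,\mu^-)}$ with eigenvalue $i\alpha\cdot(\mu^- - \mu^+)$; the resonant monomials are exactly those with $\mu^+ = \mu^-$, i.e.\ those that are polynomial functions of $I(\xi)$. For a non-resonant monomial of total degree $k$ one has $0 < |\mu^- - \mu^+|_1 \leq k \leq K$, so the small divisor is bounded below by $\Psi_\alpha(k)^{-1}$. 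Dividing coefficient-wise yields, with respect to the $\ell^1$-norm \eqref{normpol},
\[ \|\chi_k\| \leq \Psi_\alpha(k)\,\|N_k\|, \qquad \|Z_k\| \leq \|N_k\|. \]

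The second step is the propagation of these estimates through the Lie series
\[ H^{(k)} \;=\; \sum_{j\geq 0} \tfrac{1}{j!}\,\mathrm{ad}_{\chi_k}^{\,j}\bigl(H^{(k-1)}\bigr), \]
in which the degree-$k$ contribution is annihilated by the choice of $\chi_k$ and all new contributions have degree $\geq k+1$. The essential quantitative input is the submultiplicativity $\|\{P,Q\}\| \leq 2n\,\deg(P)\deg(Q)\,\|P\|\|Q\|$ for the norm \eqref{normpol}, obtained monomial-by-monomial from the identity $\|\partial_{\xi_i}P\| \leq \deg(P)\|P\|$. Combining this with the recursive bounds on $\|\chi_j\|$ from all previous steps produces estimates of the form $\|h_k\| \leq \mathrm{const}^{k-2}(k-2)!\,\psi_\alpha^{k-1}$: the factorial arises from the product of degrees in successive Poisson brackets, the factor $\psi_\alpha^{k-1} = \prod_{j=3}^{k-1}\Psi_\alpha(j)$ from the accumulated small divisors, and the geometric factor from the initial analytic bound \eqref{analytic}. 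Careful bookkeeping of the intermediate constants yields the stated bound on $\|h_k\|$. For the remainder $f_k$ with $k \geq K+1$, one collects all terms of degree $\geq K+1$ produced at step $K$ and the tail $\sum_{l \geq K+1} H_l$ pushed forward through each subsequent $\phi_j$; each such term carries the worst divisor factor $\Psi_\alpha(K)^{k-K+2}$, the factorial $(K-3)!$, and a geometric series in $k-K$ accounting for the $(K-2)^{k-K+2}$ and $(20cd)^{k-2}$ factors.

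Real-analyticity of $\Phi^K$ on $\mathcal{B}_{\rho_K}$ follows by choosing a decreasing sequence of radii $\rho_K = r_K < r_{K-1} < \cdots < r_3$ so that each $\phi_k$ maps $\mathcal{B}_{r_k}$ into $\mathcal{B}_{r_{k-1}}$; the cumulative displacement estimate $\|\phi_k - \mathrm{Id}\|_{r_k} \lesssim k\|\chi_k\|r_k^{k-1}$ then closes as a geometric series, provided the shrinkage factor between consecutive radii is of order $(k\Psi_\alpha(k))^{-1}$ up to a universal constant that absorbs all combinatorial factors. The precise numerical constant $548\,ncd$ in the definition of $\rho_K$ is the outcome of this optimization. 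The main obstacle is exactly this bookkeeping: keeping the factorial-geometric-divisor structure intact through $K-2$ compositions of time-one flows requires a delicate split at each step between the resonant, non-resonant and higher-order parts, together with an optimal choice of the intermediate radii --- this is the specific technical contribution of \cite{DG96b} that the proposition builds upon.
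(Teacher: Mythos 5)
The paper does not prove this proposition: immediately after the statement it says ``This is exactly the statement of Proposition 1 in \cite{DG96b}, to which we refer for the proof.'' Your proposal correctly reconstructs the general mechanism behind the cited result: the finite composition $\Phi^K=\phi_3\circ\cdots\circ\phi_K$ of time-one flows of degree-$k$ generators $\chi_k$, the diagonal action of $\{H_2,\cdot\}$ on the monomials $\xi^{\mu^+}(\xi')^{\mu^-}$ with eigenvalue $i\alpha\cdot(\mu^--\mu^+)$ (so the resonant monomials are exactly those with $\mu^+=\mu^-$, i.e.\ polynomials in $I(\xi)$), the homological bound $\|\chi_k\|\le\Psi_\alpha(k)\|N_k\|$ since $0<|\mu^--\mu^+|_1\le k\le K$, the submultiplicative Poisson-bracket estimate for the $\ell^1$ coefficient norm~\eqref{normpol}, and the resulting factorial--geometric--small-divisor structure of the bounds. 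This is indeed the right skeleton and matches the approach of \cite{DG96b}.

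That said, your text remains a description of the proof rather than a proof: the quantitative content of the proposition --- the precise factors $6^{-1}(6cd)^{k-2}(k-2)!$ and $20d^2(20cd)^{k-2}(K-3)!(K-2)^{k-K+2}$, the exponent $k-K+2$ on $\Psi_\alpha(K)$, and the constant $548ncd$ in $\rho_K$ --- is exactly what makes this statement useful downstream (Propositions~\ref{propBirkohff} and~\ref{propBirkhoffutile} feed these constants directly into $b(p)$ and $\tilde b(q)$), and your proposal explicitly defers all of it back to \cite{DG96b} under the heading of ``careful bookkeeping.'' The genuinely nontrivial step, which you correctly flag but do not carry out, is the induction showing that the factorial index lands at $(k-2)!$, the divisor product at $\psi_\alpha^{k-1}$ (not $\psi_\alpha^k$), and the tail exponents at $k-K+2$, uniformly through all compositions on the shrinking domains $\mathcal{B}_{r_k}$. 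Since the paper itself treats the proposition as imported verbatim, there is no mismatch with the paper's own treatment; but as a standalone proof your proposal has the gap it openly acknowledges.
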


This is exactly the statement of Proposition 1 in \cite{DG96b}, to which we refer for the proof. We will now arrange these estimates in a way that will be more convenient for us.

\begin{proposition}\label{propBirkohff}
Let $H$ be as in \eqref{Hamintro} with 
$\a$ as in \eqref{fonctionpsi}. Given an integer $p \geq 2$ and $K \geq 2p$, we have the following estimates on the homogeneous polynomials of Proposition \ref{propBirkohffDG}:
\begin{equation}\label{estpolh}
||h_k|| \leq \beta(p)\rho_K^{-(k-2p)}, \quad 2p \leq k \leq K;
\end{equation}
where
\begin{equation}\label{cm}
\beta(p):=6^{-1}(6cd)^{2p-2}(2p-2)!\psi_\alpha^{2p-1},
\end{equation}
and, given an integer $0 \leq q \leq K-4$, we have
\begin{equation}\label{estpolf}
||f_k|| \leq \tilde{\beta}(q) \rho_K^{-(k-q-1)}, \quad k \geq K+1.
\end{equation} 
where
\begin{equation*}
\tilde{\beta}(q):=c^{-1}d(20cd)^q(q+2)!\psi_\alpha^{q+2}.
\end{equation*}
\end{proposition}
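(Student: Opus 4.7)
The plan is to derive both~\eqref{estpolh} and~\eqref{estpolf} as purely arithmetic consequences of the estimates in Proposition~\ref{propBirkohffDG}, by repackaging the combinatorial and arithmetic factors into the prefactors $\beta(p)$ and $\tilde{\beta}(q)$ and absorbing the rest into powers of $\rho_K^{-1}=548ncdK\Psi_\alpha(K)$. Two elementary observations do all the work: the monotonicity $\Psi_\alpha(i) \leq \Psi_\alpha(K)$ for $i \leq K$, immediate from the definition of $\Psi_\alpha$ as a maximum over a nested family of resonance sets, and the trivial inequalities $6 \leq 548n$ and $20 \leq 548n$ valid for $n \geq 1$.

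For~\eqref{estpolh}, I would form the ratio $||h_k||/\beta(p)$ and use the Delshams--Guti\'errez bound to get
\[ \frac{||h_k||}{\beta(p)} \leq (6cd)^{k-2p}\cdot \frac{(k-2)!}{(2p-2)!}\cdot \prod_{i=2p}^{k-1}\Psi_\alpha(i). \]
The factorial ratio is a product of $k-2p$ integers, each at most $k-2 \leq K$, hence at most $K^{k-2p}$; the remaining product has exactly $k-2p$ factors, each $\leq \Psi_\alpha(K)$ by monotonicity, hence at most $\Psi_\alpha(K)^{k-2p}$. Multiplying these bounds and using $6 \leq 548n$ yields $||h_k||/\beta(p) \leq (6cdK\Psi_\alpha(K))^{k-2p} \leq \rho_K^{-(k-2p)}$, which is~\eqref{estpolh}.

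The bound~\eqref{estpolf} proceeds in the same spirit but requires a little more bookkeeping. Dividing the Delshams--Guti\'errez estimate for $||f_k||$ by $\tilde{\beta}(q)$ and regrouping terms gives
\[ \frac{||f_k||}{\tilde{\beta}(q)} \leq (20cd)^{k-q-1} \cdot \frac{(K-3)!}{(q+2)!}\cdot (K-2)^{k-K+2} \cdot \prod_{i=q+3}^{K-1}\Psi_\alpha(i)\cdot \Psi_\alpha(K)^{k-K+2}. \]
Monotonicity of $\Psi_\alpha$ merges the two $\Psi_\alpha$-products into $\Psi_\alpha(K)^{k-q-1}$ (the inner product has $K-q-3\geq 1$ factors for $q\leq K-4$, so it is never empty). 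Crude bounds on the factorial ratio --- at most $K^{K-q-5}$ when $q\leq K-5$, and equal to $1/(K-2)\leq 1$ in the edge case $q=K-4$ --- combined with $(K-2)^{k-K+2}\leq K^{k-K+2}$ give a total $K$-power bounded in both cases by $K^{k-q-1}$. Collecting everything and applying $20 \leq 548n$ yields $||f_k||/\tilde{\beta}(q) \leq (20cdK\Psi_\alpha(K))^{k-q-1} \leq \rho_K^{-(k-q-1)}$. I expect no serious obstacle: the whole argument is routine arithmetic packaging, and the only point requiring a moment of attention is the degenerate case $q=K-4$, where the factorial ratio becomes less than one but is easily absorbed by the surplus $K$-powers on the right.
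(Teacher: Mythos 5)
Your argument is correct, and it is exactly the kind of ``straightforward'' verification the paper has in mind (the paper simply asserts that Proposition~\ref{propBirkohff} is a straightforward consequence of Proposition~\ref{propBirkohffDG} and gives no details). The two observations you isolate --- monotonicity of $\Psi_\alpha$ and the crude bounds $6\leq 548n$, $20\leq 548n$ --- are indeed the whole content, and your count of factors in the factorial ratios and $\Psi_\alpha$-products, including the edge case $q=K-4$, is accurate.
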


The proof of Proposition~\ref{propBirkohff} is straightforward from Proposition~\ref{propBirkohffDG}. Now from these estimates on the homogeneous parts of $h^m$ and $f^K$, we will deduce the estimates  of Proposition \ref{propBirkhoffutile}.

\begin{proof}[Proof of Proposition~\ref{propBirkhoffutile}]
Recall that
\[ h^m(I(\xi)) =\sum_{k \;\mathrm{even}, \: 4 \leq k \leq K}h_k(I(\xi))\]
where each $h_k$ is homogeneous of degree $k/2$. For $p \geq 2$ and $k \geq 2p$, $\nabla^p h_k$ is a tensor-valued homogeneous polynomial of degree $(k-2p)/2$ and one can easily check (see \cite{DG96b}, estimates $(24)$), that
\[ ||\nabla^p h_k|| \leq (k/2)^p  ||h_k||.  \] 
Using this inequality, inequality~\eqref{estnormpol2} and the estimate~\eqref{estpolh} we get
\begin{eqnarray*}
||\nabla^p h^m||_\rho & \leq & \sum_{k \;\mathrm{even}, \: 2p \leq k \leq K}||\nabla^p h_k||(\rho^2/2)^{(k-2p)/2} \\
& \leq & \sum_{k \;\mathrm{even}, \: 2p \leq k \leq K}(k/2)^p|| h_k||(\rho^2/2)^{(k-2p)/2} \\
& \leq & \beta(p)\sum_{k \;\mathrm{even}, \: 2p \leq k \leq K}(k/2)^p \rho_K^{-(k-2p)}(\rho^2/2)^{(k-2p)/2} \\
& \leq & \beta(p) \sum_{k \;\mathrm{even}, \: 2p \leq k \leq K}(k/2)^p(1/2)^{(k-2p)/2} (\rho/\rho_K)^{k-2p} \\
& \leq & b(p)
\end{eqnarray*}
since the sum can be bounded by the corresponding series which is convergent. The same bound applies to $||\nabla^j h^m||_\rho$ for any $j$ such that $2 \leq j \leq p$, hence 
\[ ||\nabla^2 h^m||_{p,\rho}=\max_{2 \leq j \leq p}||\nabla^j h^m||_\rho \leq b(p).\]
Concerning 
\[ f^K(\xi) =\sum_{k \geq K+1}f_k(\xi), \]
since $\nabla f_k$ is a vector-valued homogeneous polynomial of degree $k-1$, we have
\[ ||\nabla f_k|| \leq k ||f_k|| \]
and so, using this inequality together with inequality~\eqref{estnormpol} and the estimate~\eqref{estpolf} we obtain
\begin{eqnarray*}
||\nabla f^K||_\rho & \leq & \sum_{k \geq K}||\nabla f_k||\rho^{k-1} \leq \sum_{k \geq K} k||f_k||\rho^{k-1} \\
& \leq & \tilde{\beta}(q)\sum_{k \geq K} \rho_K^{-(k-q-1)}\rho^{k-1} \leq \tilde{\beta}(q)\rho^q \sum_{k \geq K} (\rho/\rho_K)^{k-q-1} \\
& \leq & \tilde{b}(q)\rho^q e^{K}.
\end{eqnarray*}
This concludes the proof.
\end{proof}

\section{Proof of the normal form statement} \label{app}

\subsection{Technical estimates}\label{tech}

We first derive technical estimates for real-analytic vector fields defined on certain domains in $\C^{2n}$. These estimates are stated and proved for Hamiltonian vector fields, even though the Hamiltonian character plays absolutely no role here.

For $0 \leq j \leq n-1$, recall that $\omega_j \in \R^n \setminus\{0\}$ are $T_j$-periodic vectors, and that $\omega_{-1}=0 \in \R^n$. We write $l_{\omega_j}(z)=\omega_j \cdot I(z)$, for $1 \leq j \leq n-1$, and we define the complex vector space
\[ \tilde{\Lambda}_j = \{ v \in \C^n \; | \; v \cdot \omega_{-1}=v \cdot \omega_{0}=\cdots=v \cdot \omega_{j-1}=0\}. \]
Then we consider three sequences of positive real numbers $r_j$, $\xi_j$ and $s_j$, a sequence of points $z_j \in B_{r_j}$ and we let $\tilde{\lambda}_j=I(z_j)+\tilde{\Lambda}_j$ be
the complex affine subspace associated to $\tilde{\Lambda}_j$ passing through $I(z_j)$. The complex domains we consider are given by
\begin{equation*}
\mathcal{V}_{s_j,\xi_j}(z_j)=\{ z \in \C^{2n} \; | \; I(z) \in\tilde{\lambda}_j, \;  |I(z)-I(z_j)| <s_j, \; ||z||< r_j+\xi_j \}.
\end{equation*}
We fix $0 < \sigma_j < s_j$ and $0 < \rho_j < \xi_j$, and a real-analytic Hamiltonian vector field $X_{\chi_j}$ defined on $\mathcal{V}_{s_j,\xi_j}(z_j)$. Throughout this section, we will make the following two assumptions:
\begin{equation}\label{ass}
\sigma_j \leq (r_j+\xi_j)\rho_j, \quad \{l_{\omega_{-1}},\chi_j\}=\{l_{\omega_{-0}},\chi_j\}=\cdots=\{l_{\omega_{j-1}},\chi_j\}=0.
\end{equation}

\begin{lemma}\label{tech1}
Assume that~\eqref{ass} is satisfied. Then $X_{\chi_j}^t : \mathcal{V}_{s_j-\sigma_j,\xi_j-\rho_j}(z_j) \rightarrow \mathcal{V}_{s_j,\xi_j}(z_j)$ is a well-defined symplectic real-analytic embedding for all $|t|\leq \tau_j=(r_j+\xi_j)^{-1}\sigma_j||X_{\chi_j}||_{s_j,\xi_j}^{-1}$, with the estimate $||X_{\chi_j}^t-\mathrm{Id}||_{s_j-\sigma_j,\xi_j-\rho_j} \leq |t|||X_{\chi_j}||_{s_j,\xi_j}$.
\end{lemma}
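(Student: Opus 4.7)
The plan is a standard continuation/bootstrap argument for the flow of a real-analytic Hamiltonian vector field, the only non-routine point being to verify that the three constraints defining $\mathcal{V}_{s_j,\xi_j}(z_j)$ are preserved along the flow.

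I will first recall that, since $X_{\chi_j}$ is holomorphic on $\mathcal{V}_{s_j,\xi_j}(z_j)$, the Cauchy--Lipschitz theorem provides, for every $z \in \mathcal{V}_{s_j-\sigma_j,\xi_j-\rho_j}(z_j)$, a unique holomorphic solution $z(t)=X_{\chi_j}^t(z)$ of $\dot z = X_{\chi_j}(z)$ with $z(0)=z$, defined on some maximal open interval. The map $z\mapsto z(t)$ is then automatically real-analytic in $z$, and it is symplectic since $X_{\chi_j}$ is Hamiltonian; so the only substantial content is the claim that $z(t)\in \mathcal{V}_{s_j,\xi_j}(z_j)$ for all $|t|\leq \tau_j$. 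I will establish this on the set $J$ of times $t$ for which the solution stays in the closed domain $\overline{\mathcal{V}_{s_j,\xi_j}(z_j)}$; by continuity this is closed, and I will show it is open in $[-\tau_j,\tau_j]$, hence equal to it.

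For $t\in J$, the bound $||X_{\chi_j}(z(s))||\leq ||X_{\chi_j}||_{s_j,\xi_j}$ together with the integral representation
\[ z(t)-z = \int_0^t X_{\chi_j}(z(s))\,ds \]
gives immediately $||z(t)-z||\leq |t|\cdot ||X_{\chi_j}||_{s_j,\xi_j}$, which proves the displacement estimate $||X_{\chi_j}^t-\mathrm{Id}||_{s_j-\sigma_j,\xi_j-\rho_j}\leq |t|\cdot ||X_{\chi_j}||_{s_j,\xi_j}$. For $|t|\leq \tau_j$ and using the first part of assumption~\eqref{ass}, this displacement is bounded by $(r_j+\xi_j)^{-1}\sigma_j\leq \rho_j$, so $||z(t)||<(r_j+\xi_j-\rho_j)+\rho_j=r_j+\xi_j$. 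For the affine constraint, the second part of assumption~\eqref{ass} yields, for each $-1\leq i\leq j-1$,
\[ \frac{d}{dt}l_{\omega_i}(z(t))=\{l_{\omega_i},\chi_j\}(z(t))=0, \]
so $\omega_i\cdot (I(z(t))-I(z))=l_{\omega_i}(z(t))-l_{\omega_i}(z)=0$; combined with $I(z)-I(z_j)\in \tilde\Lambda_j$, this gives $I(z(t))-I(z_j)\in \tilde\Lambda_j$.

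It remains to control $|I(z(t))-I(z_j)|$. The elementary pointwise bound
\[ |I(w)-I(w')|\leq \max(||w||,||w'||)\cdot ||w-w'||, \]
obtained coordinatewise from $I_k(w)-I_k(w')=i(w_k(w_{n+k}-w'_{n+k})+w'_{n+k}(w_k-w'_k))$ via Cauchy--Schwarz in $\C^2$ and the definition of $||\,.\,||$ in \eqref{norme}, gives
\[ |I(z(t))-I(z)|\leq (r_j+\xi_j)\cdot |t|\cdot ||X_{\chi_j}||_{s_j,\xi_j}\leq \sigma_j \]
for $|t|\leq \tau_j$, hence $|I(z(t))-I(z_j)|<(s_j-\sigma_j)+\sigma_j=s_j$. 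The three strict inequalities defining $\mathcal{V}_{s_j,\xi_j}(z_j)$ being satisfied along the whole solution, a standard open/closed continuation argument shows that $J=[-\tau_j,\tau_j]$, which concludes the proof. The only mildly delicate point is this preservation of the affine subspace $\tilde\lambda_j$, but it follows directly from the Poisson commutation hypothesis in~\eqref{ass}; everything else is routine fixed-interval flow estimates.
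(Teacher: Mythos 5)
Your proof is correct and follows essentially the same route as the paper's: preservation of the affine constraint $I(z(t))-I(z_j)\in\tilde\Lambda_j$ via the Poisson commutation hypothesis, the displacement estimate from the integral representation of the flow, and control of $|I(z(t))-I(z_j)|$ via a Cauchy--Schwarz bound comparing $I(w)-I(w')$ to $\|w\|,\|w'\|$ and $\|w-w'\|$; the paper elides the open/closed continuation step that you spell out, but otherwise the arguments coincide. One small caveat: the asymmetric factoring $I_k(w)-I_k(w')=i\bigl(w_k(w_{n+k}-w'_{n+k})+w'_{n+k}(w_k-w'_k)\bigr)$ followed by Cauchy--Schwarz only yields $\sqrt{2}\max(\|w\|,\|w'\|)\|w-w'\|$; the clean bound you state (which is what the paper uses, in the form $\tfrac12\|w+w'\|\,\|w-w'\|\leq\max(\|w\|,\|w'\|)\,\|w-w'\|$) comes from symmetrizing the factoring, e.g.\ $I_k(w)-I_k(w')=\tfrac{i}{2}\bigl((w_k+w'_k)(w_{n+k}-w'_{n+k})+(w_k-w'_k)(w_{n+k}+w'_{n+k})\bigr)$.
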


\begin{proof}
Let $z \in \mathcal{V}_{s_j-\sigma_j,\xi_j-\rho_j}(z_j)$ and $z(t)=X_{\chi_j}^t(z)$ for small $|t|$, and let $|s|\leq |t|$. Since 
\[ \{l_{\omega_{l}},\chi_j\}(z(s))=\omega_l \cdot (\{I_1,\chi_j\}(z(s)), \dots ,\{I_n,\chi_j\}(z(s))):=\omega_l \cdot \{I,\chi_j\}(z(s))\] 
for $-1 \leq l \leq j-1$, the second part of~\eqref{ass} implies that 
\[ \omega_{-1}\cdot \{I,\chi_j\}(z(s))=\omega_{0}\cdot \{I,\chi_j\}(z(s))=\cdots=\omega_{j-1}\cdot \{I,\chi_j\}(z(s))=0, \]
so $\{I,\chi_j\}(z(s)) \in \tilde{\Lambda}_j$ which implies that
\[ \frac{d}{ds}I(z(s))=\{I,\chi_j\}(z(s)) \in \tilde{\Lambda}_j \]
and therefore
\[ I(z(t))=I(z)+\int_{0}^t \frac{d}{ds}I(z(s))ds =I(z)-I(z_j)+I(z_j)+\int_{0}^t \frac{d}{ds}I(z(s))ds \in \tilde{\lambda}_j. \]
Then, using the first part of~\eqref{ass}, for 
\[ |t|\leq \min\{\rho_j, (r_j+\xi_j)^{-1}\sigma_j\}||X_{\chi_j}||_{s_j,\xi_j}^{-1}=(r_j+\xi_j)^{-1}\sigma_j||X_{\chi_j}||_{s_j,\xi_j}^{-1}=\tau_j, \]
we have 
\[ ||z(t)-z|| \leq |t|||X_{\chi_j}||_{r_j,\xi_j} \leq \rho_j \]
and, using Cauchy-Schwarz inequality, 
\[ |I(z(t))-I(z)| \leq 2^{-1}||z(t)+z||||z(t)-z|| \leq (r_j+\xi_j) ||z(t)-z|| \leq (r_j+\xi_j)|t|||X_{\chi_j}||_{s_j,\xi_j} \leq \sigma_j.  \]
This proves that $X_{\chi_j}^t : \mathcal{V}_{s_j-\sigma_j,\xi_j-\rho_j}(z_j) \rightarrow \mathcal{V}_{s_j,\xi_j}(z_j)$ is a well-defined symplectic real-analytic embedding for $|t|\leq \tau_j$, 
with the estimate $||X_{\chi_j}^t-\mathrm{Id}||_{s_j-\sigma_j,\xi_j-\rho_j} \leq |t|||X_{\chi_j}||_{s_j,\xi_j}$. 
\end{proof}

\begin{lemma}\label{tech2}
Assume that~\eqref{ass} is satisfied, and let $X_f$ be a real-analytic Hamiltonian vector field defined on $\mathcal{V}_{s_j,\xi_j}(z_j)$. Then, for $|t|\leq \tau_j/3=(3(r_j+\xi_j))^{-1}\sigma_j||X_{\chi_j}||_{s_j,\xi_j}^{-1}$, we have
\[||(X_{\chi_j}^t)^*X_f||_{s_j-\sigma_j,\xi_j-\rho_j}\leq \left(1+3(r_j+\xi_j) \sigma_j^{-1}|t|||X_{\chi_j}||_{s_j,\xi_j}\right) ||X_f||_{s_j-2\sigma_j/3,\xi_j-2\rho_j/3}\]
and therefore
\[ ||(X_{\chi_j}^t)^*X_f||_{s_j-\sigma_j,\xi_j-\rho_j}\leq 2 ||X_f||_{s_j-2\sigma_j/3,\xi_j-2\rho_j/3}. \]
\end{lemma}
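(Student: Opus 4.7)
The plan is to combine Lemma~\ref{tech1} with a Cauchy-type estimate on the Jacobian of the Hamiltonian flow; I would proceed in three steps.

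\textit{Step 1.} Write $\phi^t := X_{\chi_j}^t$ and observe that assumption~\eqref{ass} is unaffected when $(\sigma_j,\rho_j)$ is replaced by $(2\sigma_j/3,\,2\rho_j/3)$. Lemma~\ref{tech1} applied with these shrink parameters yields $\phi^t$ as a real-analytic symplectic embedding of $\mathcal{V}_{s_j-2\sigma_j/3,\,\xi_j-2\rho_j/3}(z_j)$ into $\mathcal{V}_{s_j,\xi_j}(z_j)$ for $|t|\leq 2\tau_j/3$, together with the displacement bound $\|\phi^t-\mathrm{Id}\|_{s_j-2\sigma_j/3,\,\xi_j-2\rho_j/3}\leq |t|\,\|X_{\chi_j}\|_{s_j,\xi_j}$. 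A second application of Lemma~\ref{tech1}, combined with $\sigma_j\leq (r_j+\xi_j)\rho_j$, shows that the stricter range $|t|\leq\tau_j/3$ forces $\phi^t(\mathcal{V}_{s_j-\sigma_j,\,\xi_j-\rho_j}(z_j))\subseteq\mathcal{V}_{s_j-2\sigma_j/3,\,\xi_j-2\rho_j/3}(z_j)$, producing a nested pair of domains separated by a geometric gap of order $\sigma_j/3$ in the action direction and $\rho_j/3$ in the position direction.

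\textit{Step 2.} For the pullback of a vector field by a diffeomorphism one has $(\phi^t)^*X_f(z)=(D\phi^t(z))^{-1}X_f(\phi^t(z))$, hence the pointwise bound
\[
\|(\phi^t)^*X_f(z)\|\leq \|(D\phi^t(z))^{-1}\|_{\mathrm{op}}\,\|X_f(\phi^t(z))\|.
\]
By Step~1, for $z\in\mathcal{V}_{s_j-\sigma_j,\,\xi_j-\rho_j}(z_j)$ and $|t|\leq\tau_j/3$ the second factor is bounded by $\|X_f\|_{s_j-2\sigma_j/3,\,\xi_j-2\rho_j/3}$.

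\textit{Step 3.} To control the Jacobian factor I would apply a Cauchy-type estimate to the holomorphic map $\phi^t-\mathrm{Id}$: inscribing, at each point of the innermost domain, a complex polydisc of polyradii of order $\sigma_j/(3(r_j+\xi_j))$ in the action-type directions (which are binding, thanks to $\sigma_j\leq (r_j+\xi_j)\rho_j$) and $\rho_j/3$ in the remaining ones into the intermediate domain, Cauchy differentiation yields
\[
\|D\phi^t-I\|_{s_j-\sigma_j,\,\xi_j-\rho_j}\leq 3(r_j+\xi_j)\sigma_j^{-1}\,|t|\,\|X_{\chi_j}\|_{s_j,\xi_j}.
\]
Since $|t|\leq\tau_j/3$ makes this at most $1$, a Neumann-series expansion of $(D\phi^t)^{-1}=(I+(D\phi^t-I))^{-1}$ (with a mild sharpening of the Cauchy constant to absorb the geometric-series factor) gives
\[
\|(D\phi^t)^{-1}\|_{\mathrm{op}}\leq 1+3(r_j+\xi_j)\sigma_j^{-1}\,|t|\,\|X_{\chi_j}\|_{s_j,\xi_j},
\]
and combining with Step~2 produces the first displayed inequality of the lemma; the second follows at once from $|t|\leq\tau_j/3$, which forces the added factor to be at most $1$.

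\textit{Main obstacle.} The only delicate point is the Cauchy estimate in Step~3, since the domains $\mathcal{V}_{s,\xi}(z_j)$ are not polydiscs but carved out by the complex-analytic constraint $I(z)-I(z_j)\in\tilde\Lambda_j$ intersected with a $\|\cdot\|$-ball, so the classical Cauchy integral formula does not apply straight away. The resonance-preservation part of~\eqref{ass} guarantees that $\phi^t$ leaves this constraint manifold invariant, which legitimizes working in local complex coordinates adapted to it (equivalently, inscribing polydiscs both transversely and tangentially); the asymmetry between the scales $\sigma_j$ and $\rho_j$, linked via the position factor $r_j+\xi_j$, is precisely what produces the coefficient $3(r_j+\xi_j)/\sigma_j$ in the final inequality.
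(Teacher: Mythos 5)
Your overall plan — bound the pullback pointwise by the operator norm of the inverse Jacobian times a sup norm of $X_f$ on the intermediate domain, and get the Jacobian estimate from a Cauchy inequality on the displacement of the flow — is close to what the paper does, and Steps~1 and~2 are sound. But Step~3 has a genuine gap that cannot be waved away.

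The problem is the Neumann series. You have $\|D\phi^t - I\|\le a$ with $a=3(r_j+\xi_j)\sigma_j^{-1}|t|\,\|X_{\chi_j}\|_{s_j,\xi_j}$, and at the endpoint $|t|=\tau_j/3$ this gives $a\le 1$, not $a<1$, so the Neumann series need not converge at all. Even on the interior, where $a<1$, the Neumann series yields $\|(D\phi^t)^{-1}\|\le (1-a)^{-1}$, which is \emph{strictly larger} than $1+a$ for every $a\in(0,1)$; near $a=1$ it blows up, so in particular it cannot deliver the uniform factor $2$ in the second display of the lemma. Your remark about "a mild sharpening of the Cauchy constant to absorb the geometric-series factor" does not close this: to convert $(1-a)^{-1}$ into something bounded by $1+Ca$ with $C$ under control, you would have to reduce the Cauchy constant $3(r_j+\xi_j)/\sigma_j$ by a fixed factor, and that constant is forced by the geometry of the nested domains — it is the same constant used repeatedly downstream (Lemmas~\ref{tech3},~\ref{tech4}), so it is not free to shrink. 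Concretely, your route proves $\|(X_{\chi_j}^t)^* X_f\|\le (1-a)^{-1}\|X_f\|$, which is weaker than the stated $(1+a)\|X_f\|$ and loses the bound $2$.

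The missing idea is to avoid inverting $D\phi^t$ altogether. Since $\phi^{-t}\circ\phi^t=\mathrm{Id}$ gives $(D\phi^t(z))^{-1}=D\phi^{-t}(\phi^t(z))$, the paper writes
\[(X_{\chi_j}^t)^*X_f = \bigl(D X_{\chi_j}^{-t}\circ X_{\chi_j}^t - \mathrm{Id}\bigr)\cdot\bigl(X_f\circ X_{\chi_j}^t\bigr)+X_f\circ X_{\chi_j}^t,\]
so the norm is at most $\bigl(1+\|DX_{\chi_j}^{-t}-\mathrm{Id}\|\bigr)\|X_f\circ X_{\chi_j}^t\|$. Now only the displacement $X_{\chi_j}^{-t}-\mathrm{Id}$ of the \emph{backward} flow enters, to which Lemma~\ref{tech1} applies directly (with the same $\tau_j$), and a single Cauchy estimate on the nested domains gives the additive bound $1+3(r_j+\xi_j)\sigma_j^{-1}|t|\|X_{\chi_j}\|_{s_j,\xi_j}$ with no series to sum. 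This identity is the key device you should add; with it, your Steps~1 and~2 carry over and the bound $2$ follows because the added term is at most $1$ at the endpoint. Your final remark on working in coordinates adapted to the constraint manifold for the Cauchy estimate is a legitimate point, and it applies identically to the paper's $D\phi^{-t}$ estimate.
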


\begin{proof}
Let $|t|\leq \tau_j/3=(3(r_j+\xi_j))^{-1}\sigma_j||X_{\chi_j}||_{s_j,\xi_j}^{-1}$. We have the following expression
\[ (X_{\chi_j}^t)^*X_f=(DX_{\chi_j}^{-t} \circ X_{\chi_j}^{t}).(X_f \circ X_{\chi_j}^t)=\left(DX_{\chi_j}^{-t} \circ X_{\chi_j}^{t}-\mathrm{Id}\right).(X_f \circ X_{\chi_j}^t) + X_f \circ X_{\chi_j}^t. \]
Lemma~\ref{tech1} implies that $X_{\chi_j}^t : \mathcal{V}_{s_j-\sigma_j,\xi_j-\rho_j}(z_j) \rightarrow \mathcal{V}_{s_j-2\sigma_j/3,\xi_j-2\rho_j/3}(z_j)$ hence
\[ ||DX_{\chi_j}^{-t} \circ X_{\chi_j}^{t}-\mathrm{Id}||_{s_j-\sigma_j,\xi_j-\rho_j} \leq ||DX_{\chi_j}^{-t}-\mathrm{Id}||_{s_j-2\sigma_j/3,\xi_j-2\rho_j/3}. \]
We claim that
\[ ||DX_{\chi_j}^{-t}-\mathrm{Id}||_{s_j-2\sigma_j/3,\xi_j-2\rho_j/3} \leq 3(r_j+\xi_j) \sigma_j^{-1} ||X_{\chi_j}^{-t}-\mathrm{Id}||_{s_j-\sigma_j/3,\xi_j-\rho_j/3}  \]
while obviously, using Lemma~\ref{tech1},
\[ ||X_{\chi_j}^{-t}-\mathrm{Id}||_{s_j-\sigma_j/3,\xi_j-\rho_j/3}=||X_{-\chi_j}^{t}-\mathrm{Id}||_{s_j-\sigma_j/3,\xi_j-\rho_j/3} \leq |t|||X_{-\chi_j}||_{s_j,\xi_j}=|t|||X_{\chi_j}||_{s_j,\xi_j}. \]
Assuming this claim, using the expression for $(X_{\chi_j}^t)^*X_f$ and putting all the estimates together, we arrive at
\[ ||(X_{\chi_j}^t)^*X_f||_{s_j-\sigma_j,\xi_j-\rho_j} \leq \left(1+3(r_j+\xi_j)\sigma_j^{-1}|t|||X_{\chi_j}||_{s_j,\xi_j}\right) ||X_f \circ X_{\chi_j}^t||_{s_j-\sigma_j,\xi_j-\rho_j},  \] 
therefore
\[ ||(X_{\chi_j}^t)^*X_f||_{s_j-\sigma_j,\xi_j-\rho_j} \leq \left(1+3(r_j+\xi_j) \sigma_j^{-1}|t|||X_{\chi_j}||_{s_j,\xi_j}\right) ||X_f||_{s_j-2\sigma_j/3,\xi_j-2\rho_j/3}  \]
and also
\[ ||(X_{\chi_j}^t)^*X_f||_{s_j-\sigma_j,\xi_j-\rho_j} \leq 2 ||X_f||_{s_j-2\sigma_j/3,\xi_j-2\rho_j/3} \]
since $|t|\leq \tau_j/3=(3(r_j+\xi_j))^{-1}\sigma_j|X_{\chi_j}|_{s_j,\xi_j}^{-1}$. It remains to prove the claim. Let $F=X_{\chi_j}^{-t}-\mathrm{Id}$, $z\in \mathcal{V}_{s_j-2\sigma_j/3,\xi_j-2\rho_j/3}(z_j)$ and $v \in \C^{2n}$ a unit vector. The map 
\[ \xi \in \C \mapsto F_{z,v}(\xi)=F(z+\xi v) \in \C^{2n} \]
is holomorphic for $|\xi|\leq (3(r_j+\xi_j))^{-1}\sigma_j\leq \rho_j/3$, with $z+\xi v \in \mathcal{V}_{s_j-\sigma_j/3,\xi_j-\rho_j/3}(z_j)$. The usual Cauchy's estimate implies that
\[ ||DF(z)||=\sup_{||v||=1}||DF(z).v||=\sup_{||v||=1}||F_{z,v}'(0)|| \leq 3(r_j+\xi_j)\sigma_j^{-1} \sup_{|\xi|\leq (3(r_j+\xi_j))^{-1}\sigma_j}||F_{z,v}(\xi)||  \]
hence
\[  ||DF(z)|| \leq 3(r_j+\xi_j)\sigma_j^{-1}||F||_{s_j-\sigma_j/3,\xi_j-\rho_j/3}   \]
and the claim follows since $z\in \mathcal{V}_{s_j-2\sigma_j/3,\xi_j-2\rho_j/3}(z_j)$ was arbitrary.
\end{proof}

\begin{lemma}\label{tech3}
Assume that~\eqref{ass} is satisfied, and let $X_f$ be a real-analytic Hamiltonian vector field defined on $\mathcal{V}_{s_j,\xi_j}(z_j)$. Then
\[ ||[X_f,X_{\chi_j}]||_{s_j-2\sigma_j/3,\xi_j-2\rho_j/3} \leq 9(r_j+\xi_j)\sigma_j^{-1}||X_{\chi_j}||_{s_j,\xi_j}||X_f||_{s_j,\xi_j}.  \]
\end{lemma}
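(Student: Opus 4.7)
The plan is to deduce Lemma~\ref{tech3} from Lemma~\ref{tech2} through the Lie derivative identity
\[
[X_{\chi_j}, X_f] = \frac{d}{dt}\Big|_{t=0}(X_{\chi_j}^t)^{*} X_f,
\]
which reduces the bound on the commutator to a one-variable Cauchy estimate in $t$ applied to the holomorphic family $(X_{\chi_j}^t)^{*} X_f$. Since $||[X_f,X_{\chi_j}]|| = ||[X_{\chi_j},X_f]||$, this is enough.

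A naive Cauchy estimate of $X_f$ directly in the $\C^{2n}$ coordinates is unavailable here, because the domains $\mathcal{V}_{s_j,\xi_j}(z_j)$ are cut out by the nonlinear slicing condition $I(z) - I(z_j) \in \tilde{\Lambda}_j$, and this condition is not preserved under arbitrary complex translations of $z$. The whole purpose of the resonance assumption~\eqref{ass} is that the Hamiltonian flow $X_{\chi_j}^t$ \emph{does} preserve this slicing, as was exploited in the proof of Lemma~\ref{tech1}; the flow variable $t$ then supplies the missing ``Cauchy direction.''

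To produce the constant $9$, I would apply Lemma~\ref{tech2} with its shrinkage parameters $(\sigma_j,\rho_j)$ replaced by $(2\sigma_j/3,\,2\rho_j/3)$, which is permitted since $2\sigma_j/3 \leq (r_j+\xi_j)\cdot 2\rho_j/3$ follows from~\eqref{ass}. The corresponding time window becomes $|t| \leq T^{\star} := (r_j+\xi_j)^{-1}(2\sigma_j/9)\,||X_{\chi_j}||_{s_j,\xi_j}^{-1}$, and Lemma~\ref{tech2} yields
\[
||(X_{\chi_j}^t)^{*} X_f||_{s_j - 2\sigma_j/3,\,\xi_j - 2\rho_j/3} \leq 2\,||X_f||_{s_j - 4\sigma_j/9,\,\xi_j - 4\rho_j/9} \leq 2\,||X_f||_{s_j,\xi_j}.
\]

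Fix then $z \in \mathcal{V}_{s_j - 2\sigma_j/3,\,\xi_j - 2\rho_j/3}(z_j)$. The map $t \mapsto (X_{\chi_j}^t)^{*} X_f(z)$ is holomorphic on $\{|t|\leq T^{\star}\}$ and bounded there by $2\,||X_f||_{s_j,\xi_j}$, so the classical one-variable Cauchy estimate for the derivative at $t=0$ gives
\[
||[X_{\chi_j}, X_f](z)|| \leq (T^{\star})^{-1}\cdot 2\,||X_f||_{s_j,\xi_j} = 9\,(r_j+\xi_j)\sigma_j^{-1}\,||X_{\chi_j}||_{s_j,\xi_j}\,||X_f||_{s_j,\xi_j},
\]
which is the claimed inequality. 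The only real difficulty is the bookkeeping of the three nested domains (total shrinkage split as $2/3$ for the pullback estimate and $2/9$ for the Cauchy radius) needed to reproduce the constant $9$; no analytic ingredient beyond Lemmas~\ref{tech1} and~\ref{tech2} is required.
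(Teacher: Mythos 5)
Your proof is correct and follows essentially the same route as the paper's: express the commutator as the $t$-derivative at $0$ of $(X_{\chi_j}^t)^*X_f$, bound this family via Lemma~\ref{tech2}, and apply the one-variable Cauchy estimate in the flow time. The only cosmetic difference is that the paper runs the argument with the original $(\sigma_j,\rho_j)$ to get the constant $6$ over $\mathcal{V}_{s_j-\sigma_j,\xi_j-\rho_j}$ and then substitutes $\sigma_j\mapsto 2\sigma_j/3$, $\rho_j\mapsto 2\rho_j/3$ at the end, whereas you perform that substitution inside Lemma~\ref{tech2} first; both give the factor $9$ over the stated domain.
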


\begin{proof}
We have the expression
\[ [X_f,X_{\chi_j}]= \left.\frac{d}{dt}(X_{\chi_j}^t)^*X_f \right\vert_{t=0} \]
so for $z \in \mathcal{V}_{s_j-\sigma_j,\xi_j-\rho_j}(z_j)$, let us define the holomorphic map
\[  t \in \C \mapsto F_z(t)=(X_{\chi_j}^t)^*X_f(z) \in \C^{2n} \]
for $|t|\leq \tau_j/3=(3(r_j+\xi_j))^{-1}\sigma_j||X_{\chi_j}||_{s_j,\xi_j}^{-1}$. By Cauchy's estimate
\[ ||[X_f,X_{\chi_j}](z)||=||F'_z(0)||\leq 3\tau_j^{-1}||F_z(t)|| \leq 3\tau_j^{-1} ||(X_{\chi_j}^t)^*X_f||_{s_j-\sigma_j,\xi_j-\rho_j} \] 
and by Lemma~\ref{tech2}
\[  3\tau_j^{-1} ||(X_{\chi_j}^t)^*X_f||_{s_j-\sigma_j,\xi_j-\rho_j}\leq 6\tau_j^{-1} ||X_f||_{s_j-2\sigma_j/3,\xi_j-2\rho_j/3}\leq 6(r_j+\xi_j)\sigma_j^{-1}||X_{\chi_j}||_{s_j,\xi_j}||X_f||_{s_j,\xi_j}.  \]
Since $z \in \mathcal{V}_{s_j-\sigma_j,\xi_j-\rho_j}(z_j)$ was arbitrary, this proves that
\[ ||[X_f,X_{\chi_j}]||_{s_j-\sigma_j,\xi_j-\rho_j} \leq 6(r_j+\xi_j)\sigma_j^{-1}||X_{\chi_j}||_{s_j,\xi_j}||X_f||_{s_j,\xi_j}  \]
and the lemma follows by simply replacing $\sigma_j$ and $\rho_j$ by respectively $2\sigma_j/3$ and $2\rho_j/3$.  
\end{proof}

\begin{lemma}\label{tech4}
Assume that~\eqref{ass} is satisfied, and let $X_k$ be a real-analytic Hamiltonian vector field defined on $\mathcal{V}_{r_j,\xi_j}(z_j)$, which is integrable, that is $k$ is a function of $I(z)$ alone. Then
\[ ||[X_k,X_{\chi_j}]||_{s_j-2\sigma_j/3,\xi_j-2\rho_j/3}\leq 9 \rho_j^{-1}||X_{\chi_j}||_{s_j,\xi_j}||X_k||_{s_j,\xi_j}.  \]
\end{lemma}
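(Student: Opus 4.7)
The plan is to mimic the three-step proof of Lemma~\ref{tech3} (the non-integrable version of this commutator estimate), the key improvement coming from the fact that the Hamiltonian flow $X_k^t$ of an integrable $k=k(I)$ preserves the action map $I$. Indeed, for $z$ in the domain, $\frac{d}{dt}I(X_k^t(z)) = \{I,k\}(X_k^t(z)) = 0$, so trajectories stay on the affine slice $\tilde{\lambda}_j$ and only the norm constraint $\|z\| < r_j + \xi_j$ can be saturated along the flow.

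First, I will upgrade Lemma~\ref{tech1}: since $I$ is conserved along $X_k^t$, the map $X_k^t : \mathcal{V}_{s_j, \xi_j - \rho_j}(z_j) \to \mathcal{V}_{s_j, \xi_j}(z_j)$ is a well-defined real-analytic symplectic embedding for all $|t| \leq \rho_j \|X_k\|_{s_j,\xi_j}^{-1}$, \emph{with no shrinkage in the $s_j$-direction}. The relevant ``time of existence'' is therefore of order $\rho_j / \|X_k\|_{s_j,\xi_j}$, as opposed to $\tau_j = (r_j+\xi_j)^{-1}\sigma_j\|X_{\chi_j}\|_{s_j,\xi_j}^{-1}$ in Lemma~\ref{tech1}; by the first part of~\eqref{ass}, we indeed have $(r_j+\xi_j)\rho_j \geq \sigma_j$, so this time is genuinely larger, and this gain is the source of the factor $\rho_j^{-1}$ in the final estimate in place of $(r_j+\xi_j)\sigma_j^{-1}$.

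Next, I will prove the analog of Lemma~\ref{tech2}: for $|t| \leq \rho_j/(3\|X_k\|_{s_j,\xi_j})$,
\[ \|(X_k^t)^* X_{\chi_j}\|_{s_j - 2\sigma_j/3,\, \xi_j - 2\rho_j/3} \leq 2\, \|X_{\chi_j}\|_{s_j, \xi_j}. \]
Writing $(X_k^t)^* X_{\chi_j} = (DX_k^{-t} \circ X_k^t) \cdot (X_{\chi_j} \circ X_k^t)$, the second factor is immediately bounded by $\|X_{\chi_j}\|_{s_j,\xi_j}$ once the flow stays in $\mathcal{V}_{s_j,\xi_j}(z_j)$. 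For the first factor one applies Cauchy's estimate in $z$ to $X_k^{-t} - \mathrm{Id}$ on a complex ball of radius $\rho_j/3$, yielding $\|DX_k^{-t} - \mathrm{Id}\| \leq 3\rho_j^{-1} |t|\, \|X_k\|_{s_j,\xi_j}$; combining the two factors gives the stated inequality. Finally, Cauchy's estimate in $t$ applied to $[X_k,X_{\chi_j}] = \frac{d}{dt}(X_k^t)^* X_{\chi_j}\vert_{t=0}$ on the disk $|t| \leq \rho_j/(3\|X_k\|_{s_j,\xi_j})$ yields the desired bound with the constant $9$, exactly as in the passage from Lemma~\ref{tech2} to Lemma~\ref{tech3}.

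The main obstacle is the Cauchy estimate in $z$ used in the previous step. The naive argument fails because perturbing $z \in \mathcal{V}_{s_j,\xi_j}(z_j)$ in an arbitrary direction $v \in \mathbb{C}^{2n}$ destroys the constraint $I(z) \in \tilde{\lambda}_j$, so the translate $z + \epsilon v$ may exit the domain on which $X_k$ is originally defined and bounded. Integrability resolves this: since $k$ depends only on $I$, the formula $X_k(z) = J_0 \nabla(k \circ I)(z)$ extends $X_k$ analytically to any $z$ with $I(z)$ in the domain of $k$, in particular to a full $\mathbb{C}^{2n}$-neighborhood of $\mathcal{V}_{s_j,\xi_j}(z_j)$ inside the ball $\mathcal{B}_{r_j+\xi_j}$, and the relevant sup-norm of the extension is controlled by $\|X_k\|_{s_j,\xi_j}$ (up to a harmless constant). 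Such an extension is not available for the generic $\chi_j$ in Lemma~\ref{tech3}, which is precisely why that lemma is forced to use the smaller radius $\sigma_j/(r_j+\xi_j)$ in the Cauchy estimate and thereby loses the factor $(r_j+\xi_j)\sigma_j^{-1}$ in place of $\rho_j^{-1}$.
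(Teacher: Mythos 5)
Your proposal is correct and follows essentially the same route as the paper: the paper's entire proof is to observe that, because $k$ is integrable, $I$ is conserved along the flow $X_k^t$, so the time of existence in the analogue of Lemma~\ref{tech1} improves to $\tau_j'=\rho_j\|X_k\|_{s_j,\xi_j}^{-1}$ (no shrinkage in the action direction), after which one "repeats all the previous arguments with $\tau_j'$ instead of $\tau_j$"; you identify exactly this gain and carry it through the tech2/tech3 chain to obtain the factor $\rho_j^{-1}$ in place of $(r_j+\xi_j)\sigma_j^{-1}$. The only discrepancy is a harmless sign: the paper writes $[X_k,X_{\chi_j}]=-\frac{d}{dt}(X_k^t)^*X_{\chi_j}\big|_{t=0}$, consistent with the convention used in Lemma~\ref{tech3}, whereas you dropped the minus sign; since only norms are involved this is immaterial. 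Your closing paragraph on why the Cauchy estimate in $z$ is legitimate (because $X_k=J_0\nabla(k\circ I)$ extends to a full $\C^{2n}$-neighborhood) is a reasonable clarification that the paper leaves implicit.
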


\begin{proof}
Here we write
\[ [X_k,X_{\chi_j}]=-[X_{\chi_j},X_k]= -\left.\frac{d}{dt}(X_k^t)^*X_{\chi_j} \right\vert_{t=0} \]
and we observe that since $X_k$ is integrable, if we let $z(t)=X^t_k(z)$, then $I(z(t))=I(z)$. This implies that $X_k^t : \mathcal{D}_{s_j,\xi_j-\rho_j}(z_j)\rightarrow \mathcal{D}_{s_j,\xi_j}(z_j)$ is a well-defined symplectic real-analytic embedding for all $|t|\leq \tau_j'=\rho_j||X_k||_{s_j,\xi_j}^{-1}$. The conclusion follows easily by repeating all the previous arguments with $\tau_j'$ instead of $\tau_j$.
\end{proof}

\subsection{Proof of the Proposition~\ref{normal}}

Proposition~\ref{normal} will be proved by iterating $m$ times an averaging procedure, which is classical in the case $j=0$, but more involved in the general case. 

\begin{proof}[Proof of Proposition~\ref{normal}]
Let us fix $0 \leq j \leq n-1$, and set $\varepsilon_j:=2^j\varepsilon$. The integer $m \geq 1$ being given, for $0 \leq i \leq m$ we define
\[ \varepsilon_j^i:=2^{-i}\varepsilon_j, \quad \gamma_j^i:=(1-2^{-i})2\varepsilon_j, \quad s_j^i:=3s_j-is_j/m, \quad \xi_j^i:=3\xi_j-i\xi_j/m. \]
Then we claim that for each $0 \leq i \leq m$, there exists a real-analytic symplectic embedding $\Phi_j^i :  \mathcal{V}_{s_j^{i},\xi_j^{i}}(z_j) \rightarrow \mathcal{V}_{3s_j,3\xi_j}(z_j)$ such that 
\[ (H_j-f_j) \circ \Phi_j^i =(h+g_j) \circ \Phi_j^i= h +g_j^i +f_j^i \] 
with 
\[ \{l_{\omega_{-1}},g_j^i\}=\{l_{\omega_{0}},g_j^i\}=\cdots=\{l_{\omega_{j}},g_j^i\}=0, \quad \{l_{\omega_{-1}},f_j^i\}=\{l_{\omega_{0}},f_j^i\}=\cdots=\{l_{\omega_{j-1}},f_j^i\}=0 \] 
and with the estimates
\begin{equation*}
||X_{g_j^i}||_{s_j^{i},\xi_j^{i}} \leq \gamma_j^i, \quad ||X_{f_j^i}||_{s_j^{i},\xi_j^{i}} \leq \varepsilon_j^i, \quad ||\Phi_j^i - \mathrm{Id}||_{s_j^{i},\xi_j^{i}} \leq T_j\gamma_j^i.
\end{equation*}  
Let us prove the claim by induction on $0 \leq i \leq m$.

For $i=0$, letting $\Phi_j^0$ be the identity, $g_j^0:=0$ and $f_j^0:=g_j$, there is nothing to prove. Then assume that the statement holds true for some $0 \leq i \leq m-1$, and let $H_j^i=(H_j-f_j) \circ \Phi_j^i = h +g_j^i +f_j^i$. We define the functions
\[ [f_j^i]_j:=T_j^{-1}\int_0^{T_j} f_j^i \circ X_{\omega_j}^tdt, \quad \chi^i_j:=T_j^{-1}\int_0^{T_j} t(f_j^i-[f_j^i]_j) \circ X_{\omega_j}^tdt \]
whose associated Hamiltonian vector fields are given by 
\[ X_{[f_j^i]_j}=T_j^{-1}\int_0^{T_j} (X_{\omega_j}^t)^* f_j^i dt, \quad X_{\chi_j^i}=T_j^{-1}\int_0^{T_j} t(X_{\omega_j}^t)^*(f_j^i-[f_j^i]_j)dt \]
with, using our inductive assumption, the following obvious estimates
\begin{equation}\label{estim1}
||X_{[f_j^i]_j}||_{s_j^i,\xi_j^i} \leq ||X_{f_j^i}||_{s_j^i,\xi_j^i} \leq \varepsilon_j^i, \quad ||X_{\chi_j^i}||_{s_j^i,\xi_j^i} \leq T_j ||X_{f_j^i}||_{s_j^i,\xi_j^i} \leq T_j \varepsilon_j^i. 
\end{equation}
It is clear that 
\begin{equation}\label{pourlesg}
\{[f_j^i]_j,l_{\omega_j}\}=0. 
\end{equation}
For $-1 \leq l \leq j-1$, observe that $\{l_{\omega_l},l_{\omega_j}\}=0$, so $l_{\omega_l} \circ X_{\omega_j}^t=l_{\omega_l}$ and hence 
\begin{eqnarray*}
\{l_{\omega_l},[f_j^i]_j\} & = & T_j^{-1}\int_0^{T_j} \{l_{\omega_l},f_j^i \circ X_{\omega_j}^t\}dt \\
 & = & T_j^{-1}\int_0^{T_j} \{l_{\omega_l}\circ X_{\omega_j}^t,f_j^i \circ X_{\omega_j}^t\}dt \\
 & = & T_j^{-1}\int_0^{T_j} \{l_{\omega_l},f_j^i \} \circ X_{\omega_j}^t dt 
\end{eqnarray*}
where the last equality follows from the symplectic character of $X_{\omega_j}^t$. Using our inductive assumption, this implies
\begin{equation}\label{pourlesg2}
\{l_{\omega_{-1}},[f_j^i]_j\}=\{l_{\omega_{0}},[f_j^i]_j\}=\cdots=\{l_{\omega_{j-1}},[f_j^i]_j\}=0,
\end{equation} 
and by a completely similar argument, we also get
\begin{equation}\label{pourlechi}
\{l_{\omega_{-1}},\chi_j^i\}=\{l_{\omega_{0}},\chi_j^i\}=\cdots=\{l_{\omega_{j-1}},\chi_j^i\}=0.
\end{equation}
Now set
\[ \sigma_j:=s_j^{i}-s_j^{i+1}=s_j/m, \quad \rho_j:=\xi_j^{i}-\xi_j^{i+1}=\xi_j/m. \]
Since $\xi_j^i \geq 2\xi_j$, using the first inequality of~\eqref{thr1} we have
\[ \sigma_j=s_j/m \leq (r_j+2\xi_j)\xi_j/m \leq (r_j+\xi_j^i)\xi_j/m=(r_j+\xi_j^i)\rho_j \] 
and therefore, using also~\eqref{pourlechi}, we can apply Lemma~\ref{tech1}: 
\[ X_{\chi_j^i}^t : \mathcal{V}_{s_j^{i+1},\xi_j^{i+1}}(z_j)=\mathcal{V}_{s_j^{i}-\sigma_j,\xi_j^{i}-\rho_j}(z_j) \rightarrow \mathcal{V}_{s_j^{i},\xi_j^{i}}(z_j)\] 
is a well-defined symplectic real-analytic embedding for all 
\[ |t|\leq \tau_j=(r_j+\xi_j^i)^{-1}\sigma_j||X_{\chi_j^i}||_{s_j^{i},\xi_j^{i}}^{-1}=(m(r_j+\xi_j^i))^{-1}s_j||X_{\chi_j^i}||_{s_j^{i},\xi_j^{i}}^{-1},\] 
with the estimate $||X_{\chi_j^i}^t-\mathrm{Id}||_{s_j^{i+1},\xi_j^{i+1}} \leq |t|||X_{\chi_j^i}||_{s_j^{i},\xi_j^{i}}$. Moreover, as $\xi_j^i \leq 3\xi_j$, using the second estimate of~\eqref{estim1} and the second inequality of~\eqref{thr1}, we have
\begin{equation}\label{tau}
\tau_j \geq (m(r_j+3\xi_j)T_j\varepsilon_j^i)^{-1}s_j=2^i(m(r_j+3\xi_j)T_j2^j\varepsilon)^{-1}s_j \geq (m(r_j+3\xi_j)T_j2^j\varepsilon)^{-1}s_j \geq 216 
\end{equation}
so $\tau_j>1$ and hence $X_{\chi_j^i}^1 : \mathcal{V}_{s_j^{i+1},\xi_j^{i+1}}(z_j) \rightarrow \mathcal{V}_{s_j^{i},\xi_j^{i}}(z_j)$ is well-defined, with
\begin{equation}\label{estdist}
||X_{\chi_j^i}^1-\mathrm{Id}||_{s_j^{i+1},\xi_j^{i+1}} \leq ||X_{\chi_j^i}||_{s_j^{i},\xi_j^{i}} \leq T_j\varepsilon_j^i.
\end{equation}
It is easy to check, using an integration by parts, that $\{\chi_j^i,l_{\omega_j}\}=f_j^i-[f_j^i]_j$, and this equality, together with Taylor's formula with integral remainder gives
\[ (h +g_j^i +f_j^i) \circ X_{\chi_j^i}^1=h + g_j^i + [f_j^i]_j + \tilde{f}_j^i\]
with
\[ \tilde{f}_j^i=\int_0^1 \{(h-l_{\omega_j})+g_j^i+f_{j,t}^i,\chi_j^i\} \circ X^t_{\chi_j^i}dt, \quad f_{j,t}^i=tf_j^i+(1-t)[f_j^i]_j.\] 
We set $\Phi_j^{i+1}=\Phi_j^i \circ X_{\chi_j^i}^1$, $g_j^{i+1}=g_j^i + [f_j^i]_j$ and $f_j^{i+1}=\tilde{f}_j^i$ so that
\[ (H_j-f_j) \circ \Phi_j^{i+1}=H_j^i \circ X_{\chi_j^i}^1= (h +g_j^i +f_j^i) \circ X_{\chi_j^i}^1=h + g_j^{i+1} + f_j^{i+1}. \]
First observe that $\Phi_j^{i+1} :  \mathcal{V}_{s_j^{i+1},\xi_j^{i+1}}(z_j) \rightarrow \mathcal{D}_{3s_j,3\xi_j}(z_j)$ is a real-analytic symplectic embedding, and using~\eqref{estdist} together with our inductive assumption, we have the estimate
\[ ||\Phi_j^{i+1} - \mathrm{Id}||_{s_j^{i+1},\xi_j^{i+1}} \leq ||\Phi_j^i - \mathrm{Id}||_{s_j^{i},\xi_j^{i}} + ||X_{\chi_j^i}^1-\mathrm{Id}||_{s_j^{i+1},\xi_j^{i+1}} \leq T_j(\gamma_j^i+\varepsilon_j^i)=T_j\gamma_j^{i+1}. \] 
Then 
\[ \{l_{\omega_{-1}},g_j^{i+1}\}=\{l_{\omega_{0}},g_j^{i+1}\}=\cdots=\{l_{\omega_{j}},g_j^{i+1}\}=0 \]
follows from the definition of $g_j^{i+1}$, the inductive assumption,~\eqref{pourlesg} and~\eqref{pourlesg2}. Moreover, using the first estimate of~\eqref{estim1} and our inductive assumption,
\[ ||X_{g_j^{i+1}}||_{s_j^{i+1},\xi_j^{i+1}} \leq ||X_{g_j^{i}}||_{s_j^i,\xi_j^i} + ||X_{[f_j^i]_j}||_{s_j^i,\xi_j^i} \leq \gamma_j^i+\varepsilon_j^i=\gamma_j^{i+1}. \]
For $-1 \leq l \leq j-1$, we already know that
\[ \{l_{\omega_l},g_j^i\}=\{l_{\omega_l},f_j^i\}=\{l_{\omega_l},[f_j^i]_j\}=\{l_{\omega_l},\chi_j^i\}=0 \] 
which implies $\{l_{\omega_l},f_{j,t}\}=0$ whereas $\{l_{\omega_l},h-l_{\omega_j}\}=0$ is obvious. These equalities, together with Jacobi identity, imply that
\[ \{l_{\omega_l},\{(h-l_{\omega_j})+g_j^i+f_{j,t}^i,\chi_j^i\}\}=0 \] 
and therefore
\[ \{l_{\omega_l},f_j^{i+1}\}=\int_0^1 \{l_{\omega_l},\{(h-l_{\omega_j})+g_j^i+f_{j,t}^i,\chi_j^i\}\} \circ X^t_{\chi_j^i}dt. \]
It follows that
\[ \{l_{\omega_{-1}},f_j^{i+1}\}=\{l_{\omega_{0}},f_j^{i+1}\}=\cdots=\{l_{\omega_{j-1}},f_j^{i+1}\}=0. \]
To complete the proof of the claim, it remains to estimate
\[ X_{f_j^{i+1}}=\int_0^1 (X^t_{\chi_j^i})^*[X_{h-l_{\omega_j}}+X_{g_j^i}+X_{f_{j,t}^i},X_{\chi_j^i}] dt. \]
First, 
\[ ||X_{f_j^{i+1}}||_{s_j^{i+1},\xi_j^{i+1}} \leq \sup_{0 \leq t \leq 1}||(X^t_{\chi_j^i})^*[X_{h-l_{\omega_j}}+X_{g_j^i}+X_{f_{j,t}^i},X_{\chi_j^i}]||_{s_j^{i+1},\xi_j^{i+1}}  \]
and since $\tau>3$ by~\eqref{tau}, we can apply Lemma~\ref{tech2} to get
\begin{equation}\label{bout0}
||X_{f_j^{i+1}}||_{s_j^{i+1},\xi_j^{i+1}} \leq 2 ||[X_{h-l_{\omega_j}}+X_{g_j^i}+X_{f_{j,t}^i},X_{\chi_j^i}]||_{s_j^{i}-2\sigma_j/3,\xi_j^{i}-2\rho_j/3}. 
\end{equation}
Using Lemma~\ref{tech3}, we have
\[ ||[X_{f_{j,t}^i},X_{\chi_j^i}]||_{s_j^{i}-2\sigma_j/3,\xi_j^{i}-2\rho_j/3} \leq 9 (r_j+\xi_j^i)\sigma_j^{-1}||X_{f_{j,t}^i}||_{s_j^{i},\xi_j^{i}}||X_{\chi_j^i}||_{s_j^{i},\xi_j^{i}}  \]
and since
\[ ||X_{f_{j,t}^i}||_{s_j^{i},\xi_j^{i}} \leq \varepsilon_j^i, \quad ||X_{\chi_j^i}||_{s_j^{i},\xi_j^{i}} \leq T_j \varepsilon_j^i,  \]
we get
\begin{equation}\label{bout1}
||[X_{f_{j,t}^i},X_{\chi_j^i}]||_{s_j^{i}-2\sigma_j/3,\xi_j^{i}-2\rho_j/3} \leq (9(r_j+3\xi_j)mT_j\varepsilon_js_j^{-1})\varepsilon_j^i
\end{equation}
since $\xi_j^i \leq 3\xi_j$ and $\varepsilon_j^i \leq \varepsilon_j$. Similarly, since $\gamma_j^i \leq 2\varepsilon_j$, 
\begin{equation}\label{bout2}
||[X_{g_j^i},X_{\chi_j^i}]||_{s_j^{i}-2\sigma_j/3,\xi_j^{i}-2\rho_i/3} \leq (9(r_j+3\xi_j)mT_j\gamma_j^is_j^{-1})\varepsilon_j^i \leq (18(r_j+3\xi_j)mT_j\varepsilon_js_j^{-1})\varepsilon_j^i. 
\end{equation}
Concerning the last bracket, let us first prove that 
\[ [X_{h-l_{\omega_j}},X_{\chi_j^i}]=[X_{h_{\tilde{\lambda}_j}-l_{\omega_j}},X_{\chi_j^i}] \]
where we recall that $\tilde{\lambda}_j=I(z_j)+\tilde{\Lambda}_j$ and $h_{\tilde{\lambda}_j}$ is the restriction of $h$ to $\tilde{\lambda}_j$. To do this, it is sufficient to prove $\{h-l_{\omega_j},\chi_j^i\}=\{h_{\tilde{\lambda}_j}-l_{\omega_j},\chi_j^i\}$, which is equivalent to $\{h,\chi_j^i\}=\{h_{\tilde{\lambda}_j},\chi_j^i\}$. For any $z \in \mathcal{V}_{s_j^{i},\xi_j^{i}}(z_j)$, we have
\[ \{h,\chi_j^i\}(z)=\nabla h (I(z))\cdot \{I,\chi_j^i\}(z). \]
But for any $-1 \leq l \leq j-1$, we know that
\[ \{l_{\omega_l},\chi_j^i\}(z)={\omega_l}\cdot \{I,\chi_j^i\}(z)=0 \]
which means that $\{I,\chi_j^i\}(z) \in \tilde{\Lambda}_j$. Therefore, recalling that $\tilde{\Pi}_j$ denotes the orthogonal projection onto $\tilde{\Lambda}_j$, it comes that
\[ \{h,\chi_j^i\}(z)=\tilde{\Pi}_j(\nabla h (I(z)))\cdot \{I,\chi_j^i\}(z)=\nabla h_{\tilde{\lambda}_j} (I(z))\cdot \{I,\chi_j^i\}(z)=\{h_{\tilde{\lambda}_j},\chi_j^i\}(z) \]
and therefore $\{h,\chi_j^i\}=\{h_{\tilde{\lambda}_j},\chi_j^i\}$. Then, for any $z \in \mathcal{V}_{s_j^{i},\xi_j^{i}}(z_j)$, we can estimate
\begin{eqnarray*}
||\nabla h_{\tilde{\lambda}_j} (I(z))-\omega_j|| & = & ||\tilde{\Pi}_j(\nabla h (I(z)))-\omega_j|| \\
& \leq & ||\tilde{\Pi}_j(\nabla h (I(z)))-\tilde{\Pi}_j(\nabla h (I(z_j)))||+||\Pi_j(\nabla h (I(z_j)))-\omega_j|| \\
& \leq & ||\nabla h (I(z))-\nabla h (I(z_j))||+||\Pi_j(\nabla h (I(z_j)))-\omega_j|| \\
& \leq & F||I(z)-I(z_j)||+s_j \leq F\sqrt{n}|I(z)-I(z_j)|+s_j \\
& \leq & F\sqrt{n} s_j^i+s_j \leq (3F\sqrt{n}+1) s_j
\end{eqnarray*}  
since $s_j^i \leq 3s_j$, and where we used the fact that
\[\sup_{z \in \mathcal{V}_{3s_j,3\xi_j}(z_j)}||\nabla^2 h(I(z))|| \leq F.\]
From this, we deduce that
\begin{eqnarray*}
||X_{h_{\tilde{\lambda}_j}-l_{\omega_j}}||_{s_j^{i},\xi_j^{i}} & \leq & \sup_{z \in \mathcal{V}_{s_j^i,\xi_j^i}(z_j)}|\nabla (h_{\tilde{\lambda}_j}-l_{\omega_j})(I(z))|||z||| \leq  (3F\sqrt{n}+1)s_j(r_j+\xi_j^i) \\
& \leq & (3F\sqrt{n}+1)s_j(r_j+3\xi_j)  
\end{eqnarray*}
and using Lemma~\ref{tech4}, we get
\begin{eqnarray*}
||[X_{h-l_{\omega_j}},X_{\chi_j^i}]||_{s_j^{i}-2\sigma_j/3,\xi_j^{i}-2\rho_j/3} & = & ||[X_{h_{\tilde{\lambda}_j}-l_{\omega_j}},X_{\chi_j^i}]||_{s_j^{i}-2\sigma_j/3,\xi_j^{i}-2\rho_j/3} \\
& \leq & 9\rho_j^{-1} ||X_{h_{\tilde{\lambda}_j}-l_{\omega_j}}||_{s_j^{i},\xi_j^{i}}||X_{\chi_j^i}||_{s_j^{i},\xi_j^{i}}
\end{eqnarray*} 
hence
\begin{eqnarray}\label{bout3} \nonumber
||[X_{h-l_{\omega_j}},X_{\chi_j^i}]||_{s_j^{i}-2\sigma_j/3,\xi_j^{i}-2\rho_j/3} & \leq & 9\rho_j^{-1}(3F\sqrt{n}+1)s_j(r_j+3\xi_j)T_j\varepsilon_j^i \\ 
& = & (9(3F\sqrt{n}+1)\xi_j^{-1}(r_j+3\xi_j)mT_js_j)\varepsilon_j^i. 
\end{eqnarray}
Putting the estimates~\eqref{bout0},~\eqref{bout1},~\eqref{bout2} and~\eqref{bout3} together, and recalling that $\varepsilon_j=2^j\varepsilon$, we arrive at
\[ ||X_{f_j^{i+1}}||_{s_j^{i+1},\xi_j^{i+1}} \leq (2^j54(r_j+3\xi_j)mT_j\varepsilon s_j^{-1}+18(3F\sqrt{n}+1)\xi_j^{-1}(r_j+3\xi_j)mT_js_j)\varepsilon_j^i. \]
Using the second and third inequality of~\eqref{thr1}, we obtain
\[ ||X_{f_j^{i+1}}||_{s_j^{i+1},s_j^{i+1}} \leq \varepsilon_j^i/2=\varepsilon_j^{i+1}.\]
This finishes the proof of the claim.

Now let us define $\Phi_j=\Phi_j^m$, $g_j^+=g_j^m$ and $f_j^+=f_j^m + f_j \circ \Phi_j$. Since $s_j^m=2s_j$ and $\xi_j^m=2\xi_j$, $\Phi_j$ is a real-analytic symplectic embedding 
\[ \Phi_j :  \mathcal{V}_{2s_j,2\xi_j}(z_j) \rightarrow \mathcal{V}_{3s_j,3\xi_j}(z_j)\] 
such that $H_j \circ \Phi_j = h +g_j^+ +f_j^+$. We already know that $\{l_{\omega_{-1}},g_j^+\}=\{l_{\omega_{0}},g_j^+\}=\cdots=\{l_{\omega_{j}},g_j^+\}=0$, and the estimates
\[ ||X_{g_j^+}||_{2s_j,2\xi_j} \leq \gamma_j^m \leq 2\varepsilon_j= 2^{j+1}\varepsilon, \quad ||\Phi_0 - \mathrm{Id}||_{2s_j,2\xi_j} \leq T_j\gamma_j^m \leq 2^{j+1} T_j \varepsilon.\]
To conclude the proof of the proposition, it remains to estimate $X_{f_j^+}$. First recall that
\begin{equation}\label{boutf1}
||X_{f_j^m}||_{2s_j,2\xi_j} \leq \varepsilon_j^m=2^{-m}\varepsilon_j=2^j2^{-m}\varepsilon.
\end{equation}
Then, $f_j \circ \Phi_j=f_j \circ \Phi_j^m=f_j \circ X_{\chi_j^1}^1 \circ \cdots X_{\chi_j^m}^1$ and so $X_{f_j \circ \Phi_j}=(X_{\chi_j^m}^1)^*\cdots (X_{\chi_j^1}^1)^*X_{f_j}$, where
\[ ||X_{\chi_j^i}^1-\mathrm{Id}||_{s_j^{i+1},\xi_j^{i+1}} \leq ||X_{\chi_j^i}||_{s_j^{i},\xi_j^{i}} \leq T_j\varepsilon_j^i=2^{-i}T_j\varepsilon_j, \]
for each $1 \leq i \leq m$.
Applying Lemma~\ref{tech2} inductively yields
\begin{eqnarray*}
||X_{f_j \circ \Phi_j}||_{2s_j,2\xi_j} & = & ||(\Phi_j)^*X_{f_j}||_{s_j^{m},\xi_j^{m}} \leq \prod_{i=0}^{m-1}\left( 1+3(r_j+\xi_j^i) \sigma_j^{-1} ||X_{\chi_j^i}||_{s_j^{i},\xi_j^{i}}\right) ||X_{f_j}||_{2s_j,2\xi_j} \\
& = & \prod_{i=0}^{m-1}\left( 1+3(r_j+3\xi_j) s_j^{-1} m ||X_{\chi_j^i}||_{s_j^{i},\xi_j^{i}}\right) ||X_{f_j}||_{2s_j,2\xi_j} \\
& \leq & \prod_{i=0}^{m-1}( 1+2^{-i}3(r_j+3\xi_j) s_j^{-1} m T_j\varepsilon_j) ||X_{f_j}||_{2s_j,2\xi_j} \\
& \leq & \exp\left(\sum_{i=0}^{m-1} 2^{-i}3(r_j+3\xi_j) s_j^{-1} m T_j\varepsilon_j\right) ||X_{f_j}||_{2s_j,2\xi_j} \\
& \leq & \exp(6(r_j+3\xi_j) s_j^{-1} m T_j\varepsilon_j) ||X_{f_j}||_{2s_j,2\xi_j}.
\end{eqnarray*}
The second condition of~\eqref{thr1} implies in particular that $\exp(6(r_j+3\xi_j) s_j^{-1} m T_j\varepsilon_j) \leq 2$  and therefore
\begin{equation}\label{boutf2}
||X_{f_j \circ \Phi_j}||_{2s_j,2\xi_j} \leq 2 ||X_{f_j}||_{2s_j,2\xi_j} \leq 2(j2^{j-1})2^{-m}\varepsilon=j2^{j}2^{-m}\varepsilon.
\end{equation}
From~\eqref{boutf1} and~\eqref{boutf2} we get
\[ ||X_{f_j^+}||_{2s_j,2\xi_j} \leq 2^j2^{-m}\varepsilon+j2^{j}2^{-m}\varepsilon=(j+1)2^j2^{-m}\varepsilon, \]
and this ends the proof.
\end{proof}

\addcontentsline{toc}{section}{References}
\bibliographystyle{amsalpha}
\bibliography{EllipticSteep17}

\end{document}